\newcommand\shorttitle{Deformations of calibrated subbundles} 
\newcommand\authors{Romy Marie Merkel} 
\ifodd\value{page}
\authors
\shorttitle
\newcommand*{\rom}[1]{\expandafter\@slowromancap\romannumeral #1@}
\newtheorem{satz}{Theorem}[section]
\newtheorem{cor}[satz]{Corollary}
\newtheorem{lemma}[satz]{Lemma}
\theoremstyle{definition}
\newtheorem{defi}[satz]{Definition}
\newtheorem{rem}[satz]{Remark}
\newtheorem*{ack}{Acknowledgments}
\numberwithin{equation}{section}
\newcommand*{\vertchar}[2][0pt]{
	\tikz[
	inner sep=0pt,
	shorten >=-.15ex,
	shorten <=-.15ex,
	line cap=round,
	baseline=(c.base),
	]\draw
	(0,0) node (c) {#2}
	($(c.south)+(#1-0.5,0)$) -- ($(c.north)+(#1+1.5,0)$);
}
\DeclareMathAlphabet{\mathbbmsl}{U}{bbm}{m}{sl}
\newcommand\spinS{\vertchar{$\mathcal{S}$}}
\newcommand\negspinS{\vertchar{$\mathcal{S}$}_{\!-}}
\newcommand\posspinS{\vertchar{$\mathcal{S}$}_{\!+}}
\newcommand\pmspinS{\vertchar{$\mathcal{S}$}_{\!\pm}}
\newcommand\e{\mathbf{e}}
\newcommand\spn{\text{\normalfont span}}
\newcommand\vol{\text{\normalfont vol}}
\newcommand\im{\text{\normalfont Im}}
\newcommand\re{\text{\normalfont Re}}
\newcommand\Ver{\text{\normalfont Vert}}
\newcommand\Hor{\text{\normalfont Hor}}
\newcommand\Cl{\text{\normalfont Cl}}
\newcommand\Spin{\text{\normalfont Spin}}
\newcommand\End{\text{\normalfont End}}
\newcommand\Orm{\text{\normalfont O}}
\newcommand\U{\text{\normalfont U}}
\newcommand\SU{\text{\normalfont SU}}
\newcommand\R{\mathbb{R}}
\newcommand\C{\mathbb{C}}
\newcommand\Hbb{\mathbb{H}}
\newcommand\Oct{\mathbb{O}}
\newcommand\Gtwo{\text{\normalfont G}_2}
\newcommand\Tr{\text{Tr}\,}
\newcommand\dimL{q}
\newcommand\pointM{p}
\newcommand\pointL{p}
\newcommand\Thm{Thm.~}
\newcommand\Ch{Ch.~}
\newcommand\Sec{Sec.~}
\newcommand\Subsec{\Sec}
\newcommand\Prop{Prop.~}
\newcommand\eg{e.g., }
\newcommand\cf{cf.~}
\DeclareMathSymbol{\shortminus}{\mathbin}{AMSa}{"39}
\colorlet{RED2}{Red2}
\begin{document}

\title{Deformations of calibrated subbundles in noncompact manifolds of special holonomy via twisting by special sections}
\author{Romy Marie Merkel}
\date{\today}
\maketitle

\begin{abstract}
	\noindent We study special Lagrangian submanifolds in the Calabi--Yau manifold $T^*S^n$ with the Stenzel metric, as well as calibrated submanifolds in the $\Gtwo$-manifold $\Lambda^2_-(T^*X)$ \smash{$(X^4 = S^4, \mathbb{CP}^2)$} and the $\Spin(7)$-manifold $\negspinS(S^4)$, both equipped with the Bryant--Salamon metrics. 
	We twist naturally defined calibrated subbundles by sections of the complementary bundles and derive conditions for the deformations to be calibrated. 
	We find that twisting the conormal bundle $N^*L$ of $L^{\dimL} \subset S^n$ by a $1$-form $\mu \in \Omega^1(L)$ does not provide any new examples because the Lagrangian condition requires $\mu$ to vanish. 
	Furthermore, we prove that the twisted bundles in the $\Gtwo$- and $\Spin(7)$-manifolds are associative (coassociative) and Cayley, respectively, if the base is minimal (negative superminimal) and the section holomorphic (parallel). 
	This demonstrates that the (co\nobreakdash-)associative and Cayley subbundles allow deformations destroying the linear structure of the fiber, while the base space remains of the same type after twisting.
	While the results for the two spaces of exceptional holonomy are in line with the findings in Euclidean spaces established by \citeauthor{CaliA2} (\citeyear{CaliA2}), the special Lagrangian bundle construction in $T^*S^n$ is much more rigid than in the case of $T^*\R^n$. 
\end{abstract}

\tableofcontents

\section{Introduction}

The notion of calibrations and calibrated submanifolds originates from the seminal paper \cite{HarveyLawson1982} of \citeauthor{HarveyLawson1982}.
Apart from the rich theory of calibrated submanifolds, the link between calibrated geometry and gauge theory (see, \eg \cite{Lotay} for some examples) has been the reason for a lot of work on special Lagrangian submanifolds in Calabi--Yau manifolds, as well as some on (co\nobreakdash-)associative and Cayley submanifolds in $\Gtwo$- and $\Spin(7)$-manifolds, respectively (see, \eg \cite{Lotay2005, Lotay2006a, Lotay2006, Lotay2012} and the references therein). 
The core of this paper lies in constructing special Lagrangian submanifolds in the Calabi--Yau manifold $T^*S^n$ with the Stenzel metric, as well as calibrated submanifolds in the $\Gtwo$-manifold $\Lambda^2_-(T^*X)$ \smash{$(X^4 = S^4, \mathbb{CP}^2)$} and the $\Spin(7)$-manifold $\negspinS(S^4)$, both equipped with the Bryant--Salamon metrics. 

The presented constructions are motivated by the works of Ionel--Ka\-ri\-gian\-nis--Min-Oo \cite{CaliA}, Karigiannis--Leung \cite{CaliA2} and Karigiannis--Min-Oo \cite{CaliB}.
Inspired by the Harvey--Lawson bundle construction of special Lagrangian submanifolds in $\C^n$ \cite{HarveyLawson1982}, Ionel--Karigiannis--Min-Oo \cite{CaliA} described similar constructions of (co\nobreakdash-)associative submanifolds in $\R^7$ and Cayley submanifolds in $\R^8$. 
The idea is to view the ambient manifold as the total space of a vector bundle over some Euclidean space $\R^n$, restricting it to an oriented immersed submanifold $L \subset \R^n$ and then considering the total spaces of appropriate subbundles. 
More precisely, Harvey--Lawson \cite{HarveyLawson1982} viewed $\C^n \cong T^*\R^n$ as the cotangent bundle and then considered the conormal bundle $N^*L^{\dimL}$. 
Similarly, Ionel--Karigiannis--Min-Oo \cite{CaliA} viewed $\R^7 \cong\Lambda^2_-(T^*\R^4)$ as the space of anti-self-dual $2$-forms on $\R^4$ and $\R^8 \cong \negspinS(\R^4)$ as the negative spinor bundle of $\R^4$. 
They examined naturally defined subbundles $E$ and $F = E^\perp$ of $\Lambda^2_-(T^*\R^4)\vert_{L^2}$ of rank $1$ and $2$, and $V_+$ and $V_- = V_+^\perp$ of $\negspinS(\R^4)\vert_{L^2}$ of rank $2$. 
Later, Karigiannis--Min-Oo \cite{CaliB} generalized these constructions to complete, nonflat, noncompact manifolds of special holonomy which are total spaces of vector bundles over a compact base. In other words, they examined the analogs of these submanifolds in the Calabi--Yau manifold $T^*S^n$ with the Stenzel metric, in \smash{$\Lambda^2_-(T^*X)$ ($X^4= S^4, \mathbb{CP}^2$)} with the Bryant--Salamon metric of holonomy $\Gtwo$ and in $\negspinS(S^4)$ with the Bryant--Salamon metric of holonomy $\Spin(7)$. 
The authors of \cite{HarveyLawson1982}, \cite{CaliA} and \cite{CaliB} proved the following results: 
First, the conormal bundle $N^*L$ is special Lagrangian in $T^*X$ if and only if $L^{\dimL}$ is austere in $X^n = \R^n, S^n$. 
Second, the submanifold $E$ ($F$) is associative (coassociative) in \smash{$\Lambda^2_-(T^*X)$} if and only if $L^2$ is minimal (negative superminimal) in $X^4=\R^4, S^4, \mathbb{CP}^2$. 
Third, the submanifold $V_\pm$ is Cayley in $\negspinS(X)$ if and only if $L^2$ is minimal in $X^4=\R^4, S^4$. 

Inspired by Borisenko \cite{Borisenko1993}, Karigiannis--Leung \cite{CaliA2} further generalized \cite{CaliA} by \enquote{twisting} the subbundles by special sections of the complementary bundles. 
They derived conditions on $L$ and the sections in order to obtain calibrated submanifolds of the Euclidean spaces \smash{$\C^n \cong T^*\R^n$}, \smash{$\R^7 \cong\Lambda^2_-(T^*\R^4)$} and $\R^8 \cong \negspinS(\R^4)$. 
The present paper can be seen as a generalization of \cite{CaliA2} to the case of complete, nonflat, noncompact manifolds.

To begin with, we recall the four main examples of calibrated geometries in \autoref{sec:calibratedgeom}. This includes a collection of different characterizations of (co\nobreakdash-)associative and Cayley submanifolds established in \cite{HarveyLawson1982, CaliB}.
\autoref{sec:2ndfundform} introduces the second fundamental form and gives two identities which are key in the presented proofs. 

\autoref{sec:sLagNEW}, \autoref{sec:coassNEW} and \autoref{sec:CayleyNew} represent the core of this paper: We generalize the constructions in \cite{CaliA2} to complete, nonflat, noncompact manifolds of special holonomy. In other words, we twist the calibrated subbundles in $T^*S^n$, $\Lambda^2_-(T^*X)$ ($X^4= S^4, \mathbb{CP}^2$) and $\negspinS(S^4)$ constructed in \cite{CaliB} by special sections. Our main results are contained in \autoref{prop:Lag}, \autoref{thm:assNEU} and \autoref{thm:cayleyNEU}.
In \autoref{prop:Lag}, we find that twisting the conormal bundle $N^*L$ by a $1$-form $\mu \in \Omega^1(L^{\dimL})$ does not provide any new examples because the Lagrangian condition requires $\mu$ to vanish. 
This differs from the case of $\R^4$ in \cite{CaliA2}, where the authors found that the twisted conormal bundle is special Lagrangian in $T^*\R^n$ if and only if $\mu$ is closed and its symmetrized covariant derivative satisfies certain equations. 
\autoref{thm:assNEU} describes the (co\nobreakdash-)associative case: We show that the bundle $E$ twisted by a section $\sigma \in \Gamma(F)$ is associative in $\Lambda^2_-(T^*X)$ if and only if $L^2$ is minimal in $X^4 = S^4, \mathbb{CP}^2$ and $\sigma$ is holomorphic.
On the other hand, the complementary bundle $F$ twisted by a section $\eta \in \Gamma(E)$ is coassociative if and only if $L^2$ is negative superminimal in $X^4$ and $\eta$ is parallel.
Lastly, \autoref{thm:cayleyNEU} proves that the bundle $V_+$ twisted by a section $\psi \in \Gamma(V_-)$ is Cayley in $\negspinS(S^4)$ if and only if $L^2$ is minimal in $S^4$ and $\psi$ is holomorphic. 
The conditions on $L^2$ and the sections $\sigma \in \Gamma(F),\, \eta \in \Gamma(E)$ and $\psi \in \Gamma(V_-)$ for the $\Gtwo$-manifold $\Lambda^2_-(T^*X)$ \smash{$(X^4 = S^4, \mathbb{CP}^2)$} and the $\Spin(7)$-manifold $\negspinS(S^4)$ turn out to be the same as in the case of $\R^4$ in \cite{CaliA2}.
However, in contrast to \cite{CaliA2}, none of the presented proofs rely on identifications with the (purely imaginary) octonions. Instead, they are based on the vanishing of certain (bundle-valued) differential forms, as established in \cite{HarveyLawson1982, CaliB}. 

In \autoref{sec:examples}, we use \autoref{thm:assNEU} to construct explicit examples of associative and coassociative submanifolds in $\Lambda^2_-(T^*S^4)$ for $L^2$ being the equatorial sphere and the Veronese immersion of $S^2(\sqrt{3})$. 
In both cases, we find new coassociative submanifolds, but only the Veronese immersion admits nontrivial holomorphic sections of $F$ and thereby leads to new associative examples.

Our findings demonstrate that the constructions of calibrated submanifolds in Euclidean spaces in \cite{CaliA2} cannot be entirely extended to the manifolds $T^*S^n,\, \Lambda^2_-(T^*X)$ ($X^4 = S^4, \mathbb{CP}^2$) and $\negspinS(S^4)$ considered in \cite{CaliB}.
While the results for the two spaces of exceptional holonomy are in line with the previous findings, the construction in $T^*S^n$ does not provide any new examples.
As in \cite{CaliA2}, the (co\nobreakdash-)associative and Cayley subbundles constructed in \cite{CaliB} allow deformations destroying the linear structure of the fiber, while the base space $L^2$ remains of the same type after twisting, namely minimal or negative superminimal.
This implies that the moduli space of calibrated submanifolds near a calibrated subbundle of this kind not only contains deformations of the base $L$ but also of the fiber. 
In contrast, the special Lagrangian bundle construction in $T^*S^n$ is much more rigid than in the case of $T^*\R^n$. 
Closing this final section, we point out potential future studies on the existence of other types of deformations in the above three cases and the possibility of finding analogous results for other manifolds of special holonomy.

We should note that \autoref{thm:assNEU} and the work of \citeauthor{Karigiannis_Lotay_2021} \cite[\Sec 5]{Karigiannis_Lotay_2021} share a similar special case. Furthermore, the latter is related to \citeauthor{Trinca_2022}'s work on $\negspinS(S^4)$ \cite[\Sec 5]{Trinca_2022}, whose findings, in turn, intersect with the results in the $\Spin(7)$ case in \cite{CaliB}. 

This paper is based on the author's master's thesis \autocite{Merkel2024} at Humboldt-Universität zu Berlin, which was supervised by Shubham Dwivedi and additionally reviewed by Thomas Walpuski. 

\begin{ack}
The author was partly supported by the Deutsche Forschungsgemeinschaft (DFG, German Research Foundation) under Germany's Excellence Strategy EXC 2044--390685587, Mathematics Münster: Dynamics--Geo\-me\-try--Structure and by the CRC 1442 \enquote{Geometry: Deformations and Rigidity} of the DFG. 

The author is grateful to Shubham Dwivedi for his encouragement and advice throughout the process of writing this paper, and thanks Hans-Joachim Hein and Thomas Walpuski for their valuable feedback. The author would also like to thank Federico Trinca for his interest and comments, as well as the anonymous referee for useful suggestions that improved the clarity and quality of the paper.
\end{ack}

\section{Review of calibrated geometry}\label{sec:calibratedgeom}

The purpose of this second section is to recall the four main examples of calibrated geometries and to collect different characterizations of their calibrated submanifolds.
See, for example, \cite{HarveyLawson1982, Joyce2007, Lotay} for more details, or \cite{CaliB} for a quick overview. 

Calibrated submanifolds are a special case of minimal submanifolds of a Riemannian manifold. 
They offer two key advantages over general minimal submanifolds:
First, they are characterized by a nonlinear partial differential equation of only first-order on the immersion map. Second, they are volume-minimizing in their homology class. 

Let $(M^n,g)$ be a Riemannian manifold of dimension $n$ and $1 \leq k \leq n-1$.
An \textbf{oriented tangent $k$-plane} on $M$ is an oriented $k$-dimensional vector subspace $V$ of some tangent space $T_{\pointM}M$ to $M$. Given such a $V$, $g\vert_V$ together with the orientation on $V$ gives a natural volume form on $V$, which we denote by $\vol_V \in \Lambda^k(V^*)$. 

\begin{defi}
	A \textbf{$k$-calibration} on $M$ is a closed $k$-form $\varphi \in \Omega^k(M)$ which satisfies $\varphi \vert_V \leq \vol_V$ for all oriented tangent $k$-planes $V$ on $M$. 
	We call a $k$-dimensional oriented immersed submanifold $N$ of $M$ \textbf{calibrated} by $\varphi$ if $\varphi\vert_{T_{\pointM}N} = \vol_{T_{\pointM}N}$ for all $\pointM \in N$.
	
	If $M$ is the total space of a vector bundle over a base $X$ and a calibrated submanifold $N$ is also the total space of a subbundle, we call $N$ a \textbf{calibrated subbundle} of $M$. 
	In this context, a \textbf{subbundle} of $M \rightarrow X$ means a vector bundle $N \rightarrow L$ over a submanifold $L$ of $X$, whose fibers are subspaces of the corresponding fibers of $M$. 
\end{defi}

Due to the \textit{holonomy principle}, manifolds of reduced holonomy are likely to possess calibrations with good prospects of calibrated submanifolds (see \cite[\Prop 2.5.2, \Ch 4.2]{Joyce2007} for details). This approach leads us to the following four main examples: 

\begin{enumerate}[label=(\Roman*)]
	\item A \textbf{Kähler manifold} is a Riemannian manifold $(M^{2n},g)$ with holonomy in $\U(n)$. Equivalently, it is a complex manifold $(M,J)$ with Hermitian metric $g$ and Kähler form $\omega$. Given such a manifold, \smash{$\omega^k/k!$} defines a calibration whose calibrated submanifolds are the complex $k$-dimensional submanifolds, i.e., those submanifolds $N \subset M$ satisfying $J(T_{\pointM}N) = T_{\pointM}N$ for every $\pointM \in N$.
	
	\item A \textbf{Calabi--Yau manifold} is a Riemannian manifold $(M^{2n},g)$ with holonomy in $\SU(n)$. Equivalently, it is a Ricci-flat Kähler manifold $(M,J,g, \omega)$ equipped with a holomorphic volume form $\Omega$. In this case, $\re(\exp(-i\theta)\Omega)$ is a calibration for every $\theta \in \R$. The corresponding calibrated submanifolds are the \textbf{special Lagrangian} submanifolds with phase $\exp(i\theta)$, which are defined as the oriented real $n$-dimensional submanifolds $N$ of $M$ that satisfy
	\begin{align*}
		\omega\vert_N = 0 \text{ (\textbf{Lagrangian})}\quad \text{ and } \quad \im\bigl(\exp(-i\theta)\Omega\bigr)\vert_N = 0 \text{ (\textbf{special} Lagrangian)}.
	\end{align*}
	
	\item A \textbf{$\Gtwo$-manifold} is a Riemannian manifold $(M^7,g)$ with holonomy in $\Gtwo$. 
	This is equivalent to the existence of a parallel $\Gtwo$-structure $\varphi \in \Omega^3(M)$ on $M$. 
	Both the \textbf{associative $3$-form} $\varphi \in \Omega^3(M)$ and the \textbf{coassociative $4$-form} $\psi = *\varphi \in \Omega^4(M)$ are calibrations and we call the corresponding calibrated submanifolds \textbf{associative $3$-folds} and \textbf{coassociative $4$-folds}, respectively.\label{item:ass}
	
	\item A \textbf{$\Spin(7)$-manifold} is a Riemannian manifold $(M^8,g)$ with holonomy in $\Spin(7)$. 
	This is equivalent to the existence of a parallel $\Spin(7)$-structure $\Phi \in \Omega^4(M)$ on $M$.
	The \textbf{Cayley $4$-form} $\Phi \in \Omega^4(M)$ defines a calibration and its calibrated submanifolds are known as \textbf{Cayley $4$-folds}. \label{item:Cayley}
\end{enumerate}

There are different characterizations of the calibrated submanifolds in examples \ref{item:ass} and \ref{item:Cayley}, and we compile some of them in the remainder of this section. 
Starting with example \ref{item:ass}, recall that there exists a (not necessarily parallel) $\Gtwo$-structure $\varphi \in \Omega^3(M)$ on $(M^7,g)$ if and only if its tangent spaces can be identified with the purely imaginary octonions $\im\,\Oct$ in a smoothly varying way.
Due to this, a $\Gtwo$-manifold $(M^7, g, \varphi)$ inherits the associator $[ \cdot, \cdot, \cdot]$ and coassociator $[ \cdot, \cdot, \cdot, \cdot]$ from $\im\,\Oct$. 
Additionally, $\varphi$ defines a natural two-fold cross product via $(u \times v)^\flat =v \lrcorner u \lrcorner \varphi$, where $\lrcorner$ denotes the interior product. 

As established in \cite[\Subsec IV.1]{HarveyLawson1982} and \cite[\Prop 2.3]{CaliB}, an oriented submanifold $E^3$ is associative in $(M^7, g, \varphi)$, that is, $\varphi\vert_E = \vol_E$, if and only if one of the following equivalent conditions (up to a change of orientation) is satisfied: 
\begin{enumerate}[label=(A\arabic*)]
	\item The tangent space $TE \subset TM$ of $E$ is preserved by the two-fold cross product. 
	\item The associator $[\cdot, \cdot, \cdot]$ vanishes on $E$. 
	\item At every point $\pointM \in E$, we have $u \lrcorner v \lrcorner w \lrcorner \psi = 0$ for some basis $\{u, v , w\}$ of $T_{\pointM}E$. \label{item:asscond}
\end{enumerate}
Furthermore, \cite[\Subsec IV.1]{HarveyLawson1982} demonstrated that an oriented submanifold $F^4$ is coassociative in $(M^7, g, \varphi)$, that is, $\psi\vert_F = \vol_F$, if and only if it fulfills one of the following equivalent conditions (up to a change of orientation): 
\begin{enumerate}[label=(B\arabic*)]
	\item The two-fold cross product $u \times v$ is orthogonal to $T_pF$ for all $u,v \in T_pF,\, p\in F$. 
	\item The coassociator $[\cdot, \cdot, \cdot, \cdot]$ vanishes on $F$. 
	\item $\varphi\vert_F = 0$.\label{item:coasscond}
\end{enumerate}

Lastly, we turn to example \ref{item:Cayley}. 
Similarly to the $\Gtwo$ case, $(M^8,g)$ admits a (not necessarily parallel) $\Spin(7)$-structure $\Phi \in \Omega^4(M)$ if and only if its tangent spaces can be smoothly identified with the octonions $\Oct$.
Thus, a $\Spin(7)$-manifold $(M^8, g, \Phi)$ acquires a four-fold cross product $\cdot \times \cdot \times \cdot \times \cdot$, and its tangent vectors split into real and imaginary parts. 
Furthermore, $\Phi$ defines a three-fold cross product $X$ via $X(u,v,w)^\flat = (- u \times v \times w)^\flat = w \lrcorner v \lrcorner u \lrcorner \Phi$, where we introduce a sign to match the convention used in \cite{CaliB}. 

By \cite[\Subsec IV.1]{HarveyLawson1982} and \cite[\Prop 2.5]{CaliB}, 
an oriented submanifold $F^4$ is Cayley in $(M^8, g, \Phi)$, that is, $\Phi\vert_F = \vol_F$, if and only if one of the following equivalent conditions (up to a change of orientation) is satisfied: 
\pagebreak
\begin{enumerate}[label=(C\arabic*)]
	\item The tangent space $TF \subset TM$ of $F$ is preserved by the three-fold cross product $X$. 
	\item At every point $\pointM \in F$, we have $\im(u \times v \times w \times y) = 0$ for some basis $\{u,v,w,y\}$ of $ T_{\pointM}F$. 
	\item At every point $\pointM \in F$, the rank $7$ bundle valued $4$-form $\eta$ on $M$ defined by
	\begin{align*}\hspace*{-15pt}
		\eta(u,v,w,y) &= u^\flat\wedge X(v,w,y)^\flat + v^\flat\wedge X(w,u,y)^\flat + w^\flat\wedge X(u,v,y)^\flat + y^\flat\wedge X(v,u,w)^\flat \\
		&\quad + u \lrcorner X(v,w,y) \lrcorner \Phi + v \lrcorner X(w,u,y) \lrcorner \Phi + w \lrcorner X(u,v,y) \lrcorner \Phi + y \lrcorner X(v,u,w) \lrcorner \Phi
	\end{align*} 
	vanishes for some basis $\{u, v , w, y\}$ of $T_{\pointM}F$. \label{item:cayleycond}
\end{enumerate}

\begin{rem}
	For a brief overview of the alternating multilinear brackets and cross products on $\Oct$, see \cite{Merkel2024}. Further details can be found in \cite[\Subsec IV.1, IV.B]{HarveyLawson1982} and \cite{OctonionsWalpuski}. 
\end{rem}

\section{The second fundamental form}\label{sec:2ndfundform}

Let $(X^n,g = \langle \cdot, \cdot \rangle)$ be a real $n$-dimensional Riemannian manifold and consider some oriented immersed submanifold $L^{\dimL}$ with immersion $x: L \rightarrow X$.
We write $(\,)^T$ and $(\,)^N$ for the orthogonal projections from $x^*(TX)$ onto the tangent bundle $dx(TL) \cong TL$ and normal bundle $NL$ of $L $ in $ X$, respectively. 
Throughout this paper, $\nabla$ always denotes the Levi-Civita connection on the tangent bundle $TX$ of the ambient manifold $X$, unless stated otherwise.

In the following sections, we examine whether certain subbundles $N \rightarrow L$ of a bundle $M \rightarrow X$ are calibrated by a given calibration on $M$. Since this is a pointwise algebraic condition on the tangent space, we have the freedom of working in coordinates specially adapted to each point in $N$. We now describe the local setup for a fixed point $p^* \in L$.

By parallel transporting orthonormal bases of $(d_{p^*}x)(T_{p^*}L)$ and $N_{p^*}L$ via the induced connections on $dx(TL)$ and $NL$, respectively, we obtain a local orthonormal frame $e_1, \dots, e_{\dimL}, \nu_{\dimL +1}, \dots, \nu_n$ for $ x^*(TX)$ that satisfies
\begin{align}\label{eq:defnormalcoords}
	(\nabla_{e_i} e_j )\vert_{p^*}^T = 0 \qquad \text{ and } \qquad (\nabla_{e_i} \nu_k )\vert_{p^*}^N = 0,
\end{align}
and which we refer to as a \textbf{normal frame} at $p^*$.
Given such a frame, we choose geodesic normal coordinates $u = (u_1, \dots, u_q)$ for $L$ centered at $p^*$ such that $\partial_{u_i}({p^*}) = (d_{p^*}x)^{-1}(e_i (p^*)) \in T_{p^*}L$ for $i=1, \dots, q$. In that case, the coordinates of the fixed point are $p^* = (0, \dots, 0)$, and the directional derivatives of the immersion $x$ at this point are given by $\partial x / \partial u_i (p^*) = (d_{p^*}x)(\partial_{u_i}(p^*)) = e_i(p^*) \in (d_{p^*}x)(T_{p^*}L)$.

From now on, we identify $dx(TL)$ with $TL$, while keeping in mind that the former is technically a subbundle of $x^*(TX)$, and is spanned by $(d_{p^*}x)(T_{p^*}L) =  \spn\{e_i(p^*) = \partial x / \partial u_i (p^*) \mid i =1, \dots, q\} \subset (x^*(TX))_{p^*} = T_{x(p^*)}X$ at the point $p^*$. Moreover, we write \smash{$TX\vert_L  \stackrel{\text{def}}{=} x^*(TX)$}
\vphantom{$\int^B$}.

Let the \textbf{second fundamental form} $A$ of the immersion $L^{\dimL} \subset X^n$ be defined as the bilinear operator
\begin{align*}
	A: \Gamma(NL) \times \Gamma(TL) \rightarrow \Gamma(TL), \quad (\nu, w) \mapsto A^\nu(w)=(\nabla_w \nu)^T.
\end{align*}
It is easy to check that for any normal vector field $\nu$, $A^\nu$ is a symmetric linear operator and, hence, diagonalizable \cite[\Sec 2]{CaliA}. We use the following abbreviations: 
\begin{align*}
	A^\nu_{ij} = \langle A^\nu(e_i), e_j \rangle = A^\nu_{ji} \qquad \text{ and } \qquad A^k_{ij} = A^{\nu_k}_{ij}.
\end{align*}

\begin{rem}
	There are different ways to define the second fundamental form. Above, we stated the definition used in \cite{HarveyLawson1982} and \cite{CaliA}, which relates to the definition \nocite{Wendl}
	\begin{align*}
		A: \Gamma(TL) \times \Gamma(TL) \rightarrow \Gamma(NL), \quad (u, w) \mapsto A(u,w)=(\nabla_w u)^N
	\end{align*}
	via $\langle A(u,w), \nu \rangle = - \langle u, A^{\nu}(w) \rangle$
	for every $\nu \in \Gamma(NL)$. 
	As the sign does not affect the results, we stick to the sign conventions and terms used in \cite{CaliA} and continue to refer to $A^\nu$ as the second fundamental form in the direction of $\nu$.
\end{rem}

\begin{rem}\label{rem:minimal}
	Recall that $L$ is minimal in $X$ if and only if $\Tr A = 0 $, which is equivalent to demanding $\Tr A^k = 0$ for all $k=\dimL +1, \dots, n$ in our notation. 
\end{rem}

Finally, using a normal frame \eqref{eq:defnormalcoords} and the second fundamental form $A$, we obtain the identities
\begin{align}\label{eq:normalcoord}
	\nabla_{e_i} e^j = - \sum_{k = \dimL +1}^{n} A^k_{ij}\nu^k \qquad \text{ and } \qquad \nabla_{e_i} \nu^k = \sum_{j=1}^{\dimL} A^k_{ij} e^j 
\end{align}
at $p^*$, where $e^1, \dots, e^{\dimL}$ and $\nu^{\dimL + 1}, \dots, \nu^n$ are the dual coframes \cite[(2.3)]{CaliA}.
	
	\section{Special Lagrangians in \texorpdfstring{$T^*S^n$}{} with the Stenzel metric} \label{sec:sLagNEW}
	
We begin by examining special Lagrangians in the cotangent space $T^*S^n$ of the standard round sphere. 
To do so, we first endow $T^*S^n$ with a Calabi--Yau structure following \cite{Szoke1991, Stenzel1993, Anciaux2003, CaliB}: We identify $T^*S^n \subset \R^{n+1} \oplus (\R^{n+1})^*$ with the complex quadric 
\begin{align*}
	Q = \Bigl\{(z_0, \dots, z_n) \in \C^{n+1}\ \Big| \ \sum_{k=0}^n z_k^2 = 1 \Bigr\}
\end{align*}
via the diffeomorphism 
\begin{align}\label{sLagdiffeo}
	\Psi: T^*S^n \rightarrow Q, \quad 
	(x, \xi) \mapsto x\cosh\lvert \xi \rvert + i \frac{\xi}{\lvert \xi \rvert } \sinh\lvert \xi \rvert,
\end{align}
where the term $\frac{\xi}{\lvert \xi \rvert } \sinh\lvert \xi \rvert$ is to be interpreted as $0$ when $|\xi| = 0$, and we identify $\C^{n+1} = \R^{n+1} \oplus i (\R^{n+1})^*$.
From this, $T^*S^n$ inherits a natural complex structure, a nowhere vanishing holomorphic $(n,0)$-form $\Omega$ and the Stenzel metric, turning it into a Calabi--Yau manifold.
By \cite[Lemma 2.1]{Anciaux2003}, the corresponding Kähler form is given by 
\begin{gather}
	\omega_{\text{St}} = \frac{i}{2} \sum_{j,k = 1}^n a_{jk} dz_j \wedge d\bar{z}_k \nonumber
	\intertext{with}
	a_{jk} = \left(\delta_{jk} + \frac{z_j \bar{z}_k}{\lvert z_0 \rvert^2} \right)v^\prime + 2\, \re \left(\overline{z}_jz_k - \frac{\bar{z}_0}{z_0}z_jz_k \right)v^{\prime\prime}\label{eq:Stenzelaij}
\end{gather}
in a neighborhood of a point where $z_0 \neq 0$. 
Above, $v$ is a function of $r =\lvert z \rvert$, defined by a certain differential equation which ensures that the metric is Ricci-flat. For our purposes, it suffices to know that $v^\prime(r), v^{\prime\prime}(r) > 0$ for $r > 0$ \cite[\Prop 6]{Stenzel1993}. 
By construction, the Kähler potential only depends on $r$, which implies that $\omega_{\text{St}}$ is $\Orm_{n+1}(\C)$-invariant.

Let $L^{\dimL} \subset S^n$ be an oriented immersed submanifold and $\mu \in \Omega^1(L)$ be a $1$-form on $L$. We define the space 
\begin{align*}
	X_\mu = \bigl\{(\pointL, \xi + \mu(\pointL)) \in T^*S^n \vert_L \, \big| \, \pointL \in L,\, \xi \in N^*_{\pointL}L\bigr\},
\end{align*}
which we sometimes mnemonically refer to it as \enquote{$N^*L + \mu$}. 
This is a \enquote{twisting} of the conormal bundle $N^*L$ over $L$ obtained by affinely translating each fiber $N^*_{\pointL}L$ by a cotangent vector $\mu({\pointL}) \in T^*_{\pointL}L$. 
Our goal is to find conditions on $L$ and $\mu$ so that $N^*L + \mu$ is special Lagrangian in $T^*S^n$. We start with the conditions for it to be Lagrangian.

\begin{satz}\label{prop:Lag}
	The submanifold $N^*L + \mu$ is Lagrangian in $T^*S^n$ if and only if $\mu = 0$. 
\end{satz}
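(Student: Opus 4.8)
The plan is to verify the Lagrangian condition $\omega_{\text{St}}\vert_{X_\mu} = 0$ directly, by pulling $\omega_{\text{St}}$ back along an explicit parametrization of $X_\mu$ and checking pointwise vanishing. Fix $p^* \in L$, choose a normal frame $e_1, \dots, e_q, \nu_{q+1}, \dots, \nu_n$ and geodesic normal coordinates $u = (u_1, \dots, u_q)$ as in \autoref{sec:2ndfundform}, and use the conormal frame to introduce linear fibre coordinates $t_{q+1}, \dots, t_n$, so that $X_\mu$ is locally parametrized by $(u,t) \mapsto (x(u), \zeta(u,t))$ with $\zeta = \sum_k t_k \nu^k + \mu$. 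Composing with $\Psi$ from \eqref{sLagdiffeo} gives a map $G$ into $Q \subset \C^{n+1}$, and it suffices to evaluate $G^*\omega_{\text{St}}$ on the coordinate fields $\partial_{u_i}, \partial_{t_k}$ at $(p^*, t)$ for arbitrary $t$. Since the whole setup is $\SO(n+1)$-equivariant and $\omega_{\text{St}}$ is $\Orm_{n+1}$-invariant, I would first use this symmetry to assume $x(p^*) = (1,0,\dots,0)$. Writing $\zeta = \xi + \mu$, this has the decisive effect that along the fibre over $p^*$ one has $z_0 = \cosh\lvert\zeta\rvert \in \R$ (so $z_0 \neq 0$ and \eqref{eq:Stenzelaij} applies) while $z_1, \dots, z_n$ are purely imaginary, and the coefficients $a_{jk}$ collapse to the real expression $a_{jk} = (\delta_{jk} + \tanh^2\!\lvert\zeta\rvert\,\hat\zeta_j\hat\zeta_k)\,v' + 4\sinh^2\!\lvert\zeta\rvert\,\hat\zeta_j\hat\zeta_k\,v''$, where $\hat\zeta = \zeta/\lvert\zeta\rvert$.

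Next I would compute the pushforwards. Differentiating $\Psi$ through $\cosh\lvert\zeta\rvert$ and $\sinh\lvert\zeta\rvert\,\hat\zeta$, and converting the connection terms into second fundamental form terms via \eqref{eq:normalcoord}, one finds that $\partial_{t_k}$ has purely imaginary components in the coordinates $z_1, \dots, z_n$, while $\partial_{u_i}$ has real part $\cosh\lvert\zeta\rvert\,\delta_{ij}$ plus an imaginary part carrying the $A^k_{ij}$ and the derivatives of $\mu$. I would then sort the pairings of $G^*\omega_{\text{St}}$ into vertical–vertical, horizontal–vertical and horizontal–horizontal. By the symmetry $a_{jk} = a_{kj}$, the vertical–vertical pairings vanish identically and the symmetric contributions to the remaining pairings cancel; this is a purely formal consequence of symmetry and requires no information about $L$ or $\mu$.

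The crux is the horizontal–vertical pairing, which survives and reduces to $\omega_{\text{St}}(\partial_{u_i}, \partial_{t_k}) = \cosh\lvert\zeta\rvert\sum_m a_{im} B_{km}$ with $B_{km} = f'(\lvert\zeta\rvert)\tfrac{t_k}{\lvert\zeta\rvert}\zeta_m + f(\lvert\zeta\rvert)\delta_{mk}$ and $f(s) = \sinh s/s$. Feeding in the simplified $a_{jk}$ and using that the tangential components of $\zeta$ are exactly $\mu$ (so $\zeta_i = \mu_i$ and $\hat\zeta_i = \mu_i/\lvert\zeta\rvert$ for $i \le q$), every term acquires a factor $\mu_i$, leaving $\omega_{\text{St}}(\partial_{u_i}, \partial_{t_k}) = \cosh\lvert\zeta\rvert\,\mu_i\,K_k$ for a factor $K_k$ independent of $i$. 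Evaluating at a fibre point with $\xi \neq 0$ and taking a fibre index $k$ with $t_k \neq 0$, the positivity $v', v'' > 0$ from \cite[\Prop 6]{Stenzel1993} together with $f, f' > 0$ makes $K_k$ strictly positive, so the Lagrangian condition forces $\mu_i = 0$ for every $i$, i.e.\ $\mu = 0$. For the converse, setting $\mu = 0$ makes every tangential $\hat\zeta_i$ vanish, which kills the horizontal–vertical pairing and, by the same token, the horizontal–horizontal one; thus $N^*L$ is Lagrangian, as already recorded in \cite{CaliB}.

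I expect the main obstacle to be controlling the unknown profile functions $v', v''$ and the transcendental factors $\cosh\lvert\zeta\rvert, \sinh\lvert\zeta\rvert$ that the nonflat Stenzel metric injects into $\omega_{\text{St}}$. The two ideas that defuse this are the symmetry normalization $x(p^*) = (1,0,\dots,0)$, which renders all relevant quantities real and expressible through $\hat\zeta$, and the sign information $v', v'' > 0$, which guarantees that the coefficient multiplying $\mu$ cannot vanish. This positivity is exactly what prevents the cancellation that, in the flat model $T^*\R^n$ of \cite{CaliA2}, would only force $\mu$ to be closed; here it rigidly forces $\mu$ itself to vanish.
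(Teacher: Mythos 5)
Your proposal is correct and follows essentially the same route as the paper's proof: the same adapted parametrization composed with $\Psi$, the same use of $\Orm_{n+1}(\C)$-invariance to place $x(p^*)$ at $\e_0$ so that the Stenzel coefficients become real, the same identification of the horizontal–vertical pairing $\omega_{\text{St}}(E_i,F_j)$ as the decisive one, the same factorization into $\mu_i$ times a quantity that is nonzero by $v',v''>0$, and the same appeal to \cite{CaliB} for the converse. The only (harmless) additions are your explicit remarks that the vertical–vertical and horizontal–horizontal pairings cancel formally, which the paper does not need since it only extracts the necessary condition from the mixed pairing.
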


\begin{proof}[Proof.]
	The submanifold $N^*L + \mu$ is Lagrangian in $T^*S^n$ if and only if its tangent space at every point is a Lagrangian subspace of the corresponding tangent space to $T^*S^n$. We will show that for this to be the case at a point $(p, \xi + \mu(p)) \in (N^*L \setminus L) + \mu$, the 1-form $\mu $ has to vanish at $p$. The other implication then follows from the proof of \cite[\Thm 3.1]{CaliB}. 
	
	To do so, we fix a point $(\pointL^*, \xi^* + \mu({\pointL^*})) \in X_\mu$ with $\xi^* \neq 0$, and let $(e^1, \dots, e^\dimL, \nu^{\dimL+1}, \dots, \nu^n)$ be a local orthonormal adapted coframe along $L$ whose dual frame is normal at $p^*$ (see \eqref{eq:defnormalcoords}). Moreover, we let $u = (u_1, \dots, u_{\dimL})$ be the coordinates for $L$ described in \autoref{sec:2ndfundform}, and $t = (t_{\dimL +1}, \dots, t_n)$ be the coordinates on the fiber with respect to the chosen coframe.
	Then our fixed point $(\pointL^*, \xi^* + \mu({\pointL^*})) $ has the coordinates $(0, t^*)$ for some $t^*= (t_{\dimL +1}^*, \dots, t_n^*) \neq 0$. 
	
	The immersion of $N^*L + \mu$ into $T^*S^n$, $\Phi: X_\mu \rightarrow T^*S^n$, is locally given by
	\begin{align*}
		\Phi: (u,t)\mapsto \Bigl(x(u), \sum_{k=\dimL +1}^n t_k \nu^k(u) + \mu(u)\Bigr) = \Bigl(x(u), \sum_{k=\dimL +1}^n t_k \nu^k(u) + \sum_{l=1}^{\dimL} a_l(u)e^l(u)\Bigr),
	\end{align*}
	where $x$ is the immersion of $L$ into $S^n$, and $a = (a_1, \dots, a_{\dimL})$ are the coordinates of $\mu$ with respect to the local trivialization \smash{$T^*L \stackrel{\text{loc}}{=}  \spn\{ e^1, \dots, e^{\dimL} \} $}\vphantom{$\int^B$}. 
	
	Define $\hat{\nu}(u,t) = \sum_{k=\dimL +1}^n t_k \nu^k(u)$ and $y(u,t) = \lvert \hat{\nu}(u,t) + \mu(u) \rvert^2$. 
	Since $e^1, \dots, e^{\dimL},\allowbreak \nu^{\dimL +1}, \dots, \nu^n$ are orthonormal, we get 
	\begin{align}\label{eq:y}
		y(u,t) 
		&= \Big\lvert\sum_{k=\dimL +1}^n t_k \nu^k(u) + \sum_{l=1}^{\dimL}a_l(u)e^l(u) \Big\rvert^2 = \sum_{k=\dimL +1}^n\lvert t_k\rvert^2 + \sum_{l=1}^{\dimL}\lvert a_l(u)\rvert^2 \nonumber \\
		&= \lvert t\rvert^2 + \lvert a(u)\rvert^2. 
	\end{align}
	In particular, $y$ vanishes nowhere in a sufficiently small neighborhood of $(0, t^*)$ because $t^* \neq 0$.
	Restricting the diffeomorphism $\Psi: T^*S^n \rightarrow Q$ (see \eqref{sLagdiffeo}) to $\Phi(X_\mu) \subset T^*S^n$ gives 
	\begin{align}\label{eq:PsionPhi}
		\Psi(x, \hat{\nu} + \mu) 
		= x\cosh\sqrt{y} + i\, \frac{\hat{\nu} + \mu}{\sqrt{y}} \sinh\sqrt{y}.
	\end{align}
	
	We want to determine when the Kähler form $\omega_{\text{St}}$ vanishes on the tangent space to $(\Psi \circ \Phi)(X_\mu)$ at $z^* =  (\Psi \circ \Phi)(0,t^*) \in Q \subset \C^{n+1}$.
	Let $(\e_0, \dots, \e_n)$ denote the orthonormal basis of $\R^{n+1}$ with dual basis $(\e^0, \dots, \e^n)$ defined by $(x, e^1, \dots, e^{\dimL}, \nu^{\dimL +1}, \dots, \nu^n)\vert_{u=0} = (\e_0, \e^1, \dots, \e^n)$. 
	Then the point $z^*$ takes the form
	\begin{align*}
		z^*
		&=\cosh\sqrt{y(0, t^*)}\, \e_0 + i\, \frac{\sum_{k=q+1}^{n} t_k^*\e^k + \sum_{l=1}^{\dimL} a_l(0)\e^l}{\sqrt{y(0, t^*)}} \sinh\sqrt{y(0, t^*)},
	\end{align*}
	which simplifies to
	\begin{align*}
		z^* = \cosh\sqrt{y}\, \e_0 + i\sum_{l=1}^{\dimL} \biggl( \frac{\sinh\sqrt{y} }{\sqrt{y}} a_l \biggr)\e^l + i\sum_{k=q+1}^{n} \biggl( \frac{\sinh\sqrt{y} }{\sqrt{y}} t_k^* \biggr)\e^k ,
	\end{align*}
	omitting the dependence on $(0,t^*)$. 
	
	The invariance of $\omega_{\text{St}}$ under $\Orm_{n+1}(\C)$ allows us to transform the standard coordinates on $\C^{n+1}$ via (complex) orthogonal transformations without changing the form's local expression (as long as the coefficients $a_{jk}$ remain well-defined). Explicitly, we can use $(\e_0, \dots, \e_n)$ as the underlying orthonormal basis for $\R^{n+1}$ with dual basis $(\e^0, \dots, \e^n)$. The corresponding coordinates for $\C^{n+1} = \R^{n+1} \oplus i (\R^{n+1})^*$ are then given by $z_k = \langle \re\, \cdot, \e_k \rangle_{\R^{n+1}} + i \langle \im\, \cdot, \e^k \rangle_{(\R^{n+1})^*}$ for $k = 0, \dots, n$, and are related to the standard coordinates on $\C^{n+1}$ via a (real) orthogonal transformation. As these functions are linear, they satisfy $dz_k = z_k$ for all $k= 0, \dots, n$. 
	
	As a result, the point $z^*$ has the coordinates
	\begin{align*}
		z_0 =  \cosh\sqrt{y},\quad z_l =  i\, \frac{\sinh\sqrt{y} }{\sqrt{y}} a_l\quad(l=1, \dots, q), \quad z_k = i\, \frac{\sinh\sqrt{y}}{\sqrt{y}} t^*_k \quad(k=q+1, \dots, n).
	\end{align*}
	Therefore, the coefficients of the Kähler form $\omega_{\text{St}}$ corresponding to the Stenzel metric (see \eqref{eq:Stenzelaij}) are given by 
	\begin{align*}
		a_{kl} = a_{lk}
		= 
		\begin{dcases}
			\biggl(\delta_{kl}+ \frac{a_ka_l}{y} \tanh^2\sqrt{y}\biggr)v^\prime + \frac{4a_ka_l}{y}\sinh^2\sqrt{y} \, v^{\prime\prime},& k \leq l \leq \dimL, \\
			\frac{a_kt_l^*}{y} \tanh^2\sqrt{y}\, v^\prime + \frac{4a_kt^*_l}{y}\sinh^2\sqrt{y} \, v^{\prime\prime},& k\leq \dimL < l, \\
			\biggl(\delta_{kl}+ \frac{t^*_kt^*_l}{y} \tanh^2\sqrt{y}\biggr)v^\prime + \frac{4t^*_kt^*_l}{y}\sinh^2\sqrt{y} \, v^{\prime\prime},& \dimL < k \leq l
		\end{dcases}
	\end{align*}
	at the point $z^*$. 
	Their symmetry allows us to write $\omega_{\text{St}}$ as
	\begin{align}\label{wst}
		\omega_{\text{St}} 
		&=   \frac{i}{2} \sum_{k=1}^n a_{kk} (dz_k \wedge d\bar{z}_k) + \frac{i}{2} \sum_{1 \leq k < l \leq n} a_{kl} (dz_k \wedge d\bar{z}_l + dz_l \wedge d\bar{z}_k).
	\end{align}
	
	The tangent space $T_{z^*}((\Psi \circ \Phi)(X_\mu))$ is spanned by $E_i = ( \Psi \circ \Phi)_ *(\partial_{ u_i})$ and $F_j = ( \Psi \circ \Phi)_ *(\partial_{t_j})$ for $i = 1, \dots, \dimL$ and $ j = \dimL +1, \dots, n$. Specifically, $E_i$ is given by
	\begin{align*}
		E_i&= \Bigg[\frac{\sinh \sqrt{y} }{\sqrt{y}} \biggl( \sum_{l=1}^{\dimL} a_l\frac{\partial a_l}{\partial {u_i}} \biggr)  x + \cosh\sqrt{y}\, \frac{\partial x}{\partial {u_i}}  \Bigg. \\*
		&\qquad + i\, \frac{1}{y} \biggl( \sum_{l=1}^{\dimL} a_l \frac{\partial a_l}{\partial {u_i}} \biggr) \biggl(\cosh\sqrt{y} - \frac{\sinh\sqrt{y}}{\sqrt{y}}\biggr) (\hat{\nu} + \mu)
		\Bigg.+ i \, \frac{\sinh\sqrt{y}}{\sqrt{y}} \left(\nabla_{e_i} \hat{\nu} + \nabla_{e_i}\mu\right) \Bigg]\Bigg\vert_{(0,t^*)}
	\end{align*}
	for all $i=1, \dots, q$. 
	As we are working with a normal frame, we can use \eqref{eq:normalcoord} to compute
	\begin{align*}
		\nabla_{e_i} \hat{\nu} &= \nabla_{e_i} \Bigl(\sum_{k=\dimL +1}^n t_k \nu^k \Bigr) = \sum_{k=\dimL +1}^n t_k (\nabla_{e_i}\nu^k)
		= \sum_{k=\dimL +1}^n t_k \Bigl(\sum_{l=1}^{\dimL} A^k_{il} e^l \Bigr) \\*
		&= \sum_{l=1}^{\dimL} \Bigl(\sum_{k=\dimL +1}^n t_k A^k_{il}\Bigr) e^l 
		= \sum_{l=1}^{\dimL} A^{\hat{\nu}}_{il} e^l 
		= \sum_{l=1}^{\dimL} A^{\hat{\nu}}_{il} \e^l 
	\end{align*}
	and 
	\begin{align*}
		\nabla_{e_i} \mu 
		&= \nabla_{e_i} \Bigl(\sum_{l=1}^{\dimL} a_le^l\Bigr) 
		= \sum_{l=1}^{\dimL} \biggl(\frac{\partial a_l}{\partial u_i} e^l + a_l \nabla_{e_i} e^l\biggr) 
		= \sum_{l=1}^{\dimL} \biggl(\frac{\partial a_l}{\partial u_i} e^l - \sum_{k = \dimL +1}^{n} a_lA^k_{il}\nu^k\biggr) \\*
		&= \sum_{l=1}^{\dimL} \frac{\partial a_l}{\partial u_i} e^l - \sum_{k = \dimL +1}^{n} \Bigl( \sum_{l=1}^{\dimL} a_lA^k_{il}\Bigr)\nu^k 
		= \sum_{l=1}^{\dimL} \frac{\partial a_l}{\partial u_i} \e^l - \sum_{k = \dimL +1}^{n} \Bigl( \sum_{l=1}^{\dimL} a_lA^k_{il}\Bigr)\e^k 
	\end{align*}
	at $(0, t^*)$.
	Thus, every $E_i$ takes the form
	\begin{align*}
		E_i 
		&= \frac{\sinh \sqrt{y} }{\sqrt{y}} \biggl( \sum_{l=1}^{\dimL} a_l\frac{\partial a_l}{\partial {u_i}} \biggr)  \e_0 + \cosh\sqrt{y}\, \e_i \\*
		&\quad+ i\, \frac{1}{y} \biggl( \sum_{l=1}^{\dimL} a_l \frac{\partial a_l}{\partial {u_i}} \biggr) \biggl(\cosh\sqrt{y} - \frac{\sinh\sqrt{y}}{\sqrt{y}}\biggr) \Bigl( \sum_{l=1}^{\dimL} a_l \e^l+ \sum_{k=q+1}^{n} t_k^*\e^k\Bigr)\\*
		&\quad+ i\, \frac{\sinh\sqrt{y}}{\sqrt{y}} \Biggl(\sum_{l=1}^{\dimL} \biggl(A^{\hat{\nu}}_{il} +\frac{\partial a_l}{\partial u_i} \biggr)\e^l - \sum_{k = \dimL +1}^{n} \Bigl( \sum_{l=1}^{\dimL} a_lA^k_{il}\Bigr)\e^k \Biggr).
	\end{align*}
	
	On the other hand, $F_j = (\Psi \circ \Phi)_ *(\partial_{t_j})$ is given by
	\begin{align*}
		F_j&= \Bigg[t_j \frac{\sinh \sqrt{y} }{\sqrt{y}}x
		+ i\, \frac{t_j}{y} \biggl(\cosh\sqrt{y} - \frac{\sinh\sqrt{y}}{\sqrt{y}}\biggr) (\hat{\nu} + \mu)
		+ i \, \frac{\sinh\sqrt{y}}{\sqrt{y}} \nu^j\Bigg]\Bigg\vert_{(0,t^*)} \\
		&= t_j^* \frac{\sinh \sqrt{y} }{\sqrt{y}}\e_0
		+ i\, \frac{t_j^*}{y} \biggl(\cosh\sqrt{y} - \frac{\sinh\sqrt{y}}{\sqrt{y}}\biggr) \Bigl(\sum_{l=1}^{\dimL} a_l \e^l+ \sum_{k=q+1}^{n} t_k^*\e^k\Bigr)
		+ i \, \frac{\sinh\sqrt{y}}{\sqrt{y}} \e^j 
	\end{align*}
	for all $j=\dimL +1, \dots, n$. 
	
	Our goal is to compute $\omega_\text{St}$ of any pair of vectors in the basis $E_1, \dots, E_q,\allowbreak F_{q+1}, \dots, F_n$. 
	To do so, we first need to determine $dz_k$ and $d\bar{z}_k$ of every basis vector for all $k=1, \dots, n$. 
	(Recall that $dz_k= z_k = \langle \re\, \cdot, \e_k \rangle_{\R^{n+1}} + i \langle \im\, \cdot, \e^k \rangle_{(\R^{n+1})^*}$ for all $k$.)
	For every $i=1, \dots, q$, we find
	\begin{align*}
		dz_k(E_i) &= \overline{d\bar{z}_k(E_i)} \\*
		&=\ \delta_{ik} \cosh\sqrt{y} \\*
		&\quad+ i\, \Biggl(\frac{a_k}{y} \biggl( \sum_{l=1}^{\dimL} a_l \frac{\partial a_l}{\partial {u_i}} \biggr) \biggl(\cosh\sqrt{y} - \frac{\sinh\sqrt{y}}{\sqrt{y}}\biggr)
		+ \biggl(A^{\hat{\nu}}_{ik} + \frac{\partial a_k}{\partial u_i}\biggr)\frac{\sinh\sqrt{y}}{\sqrt{y}}
		\Biggr)
		\intertext{for $k \leq \dimL$, and}
		%%%
		%%%
		dz_{k}(E_i) &= -d\bar{z}_{k}(E_i) \\*
		&= i\,\Biggl(\frac{ t^*_k}{y} \biggl( \sum_{l=1}^{\dimL} a_l \frac{\partial a_l}{\partial {u_i}} \biggr) \biggl(\cosh\sqrt{y} - \frac{\sinh\sqrt{y}}{\sqrt{y}}\biggr)
		- \Bigl(\sum_{l=1}^{\dimL}a_lA^{k}_{il}\Bigr)\frac{\sinh\sqrt{y}}{\sqrt{y}}\Biggr) 
		\intertext{for $k \geq \dimL +1$. The remaining basis vectors satisfy}
		%%%
		%%%
		dz_k(F_{j}) &= - d\bar{z}_k(F_{j})\\*
		&= i\, \frac{a_kt_j^*}{y} \biggl(\cosh\sqrt{y} - \frac{\sinh\sqrt{y}}{\sqrt{y}}\biggr) 
		\intertext{for $k \leq \dimL$, and}
		dz_k(F_{j}) &= - d\bar{z}_k(F_{j})\\*
		&= i\, \frac{t^*_kt_j^*}{y} \biggl(\cosh\sqrt{y} - \frac{\sinh\sqrt{y}}{\sqrt{y}}\biggr) +  i \delta_{jk} \frac{\sinh\sqrt{y}}{\sqrt{y}}
	\end{align*}
	for $k \geq \dimL +1$, where $j = q+1, \dots, n$.
	
	We start with the pair $(F_j, E_i)$ for $i=1, \dots, q$ and $j = q+1, \dots, n$. Using the above formulas, we compute 
	\begin{align*}
		&(dz_k \wedge d\bar{z}_l + dz_l \wedge d\bar{z}_k)(F_{j}, E_i) \\*
		&\qquad= dz_k(F_j)\bigl(d\bar{z}_l(E_i) + dz_l(E_i)\bigr) + dz_l(F_j)\bigl(dz_k(E_i) + d\bar{z}_k(E_i)\bigr) \\
		&\qquad= 2 \Bigl(dz_k(F_j)\, \re\bigl(dz_l(E_i)\bigr) + dz_l(F_j)\, \re\bigl(dz_k(E_i)\bigr)\Bigr) \\
		&\qquad= 2 i\, \frac{t_j^* \cosh\sqrt{y}}{y} \biggl(\cosh\sqrt{y} - \frac{\sinh\sqrt{y}}{\sqrt{y}}\biggr) (a_k\delta_{li} + a_l \delta_{ki})
		\intertext{for $k \leq l \leq \dimL$,}
		%%%
		%%%
		&(dz_k \wedge d\bar{z}_l + dz_l \wedge d\bar{z}_k)(F_{j}, E_i) \\*
		&\qquad= 2 \Bigl(dz_k(F_j)\, \re\bigl(dz_l(E_i)\bigr) + dz_l(F_j)\, \re\bigl(dz_k(E_i)\bigr)\Bigr) \\
		&\qquad= 2i\, \delta_{ki} \cosh\sqrt{y}\,\Biggl(\frac{t^*_lt_j^*}{y} \biggl(\cosh\sqrt{y} - \frac{\sinh\sqrt{y}}{\sqrt{y}}\biggr) + \delta_{jl} \frac{\sinh\sqrt{y}}{\sqrt{y}}\Biggr)\\
		&\qquad= 2i\, \delta_{ki}\frac{t^*_lt_j^* \cosh\sqrt{y}}{y} \biggl(\cosh\sqrt{y} - \frac{\sinh\sqrt{y}}{\sqrt{y}}\biggr) 
		+  2i\, \delta_{ki}\delta_{jl} \frac{\cosh\sqrt{y}\sinh\sqrt{y}}{\sqrt{y}}
		\intertext{for $k \leq q < l$, and}
		%%%
		%%%
		&(dz_k \wedge d\bar{z}_l + dz_l \wedge d\bar{z}_k)(F_{j}, E_i) \\*
		&\qquad= 2 \Bigl(dz_k(F_j)\, \re\bigl(dz_l(E_i)\bigr) + dz_l(F_j)\, \re\bigl(dz_k(E_i)\bigr)\Bigr) = 0
	\end{align*}
	for $q <  k \leq l$. 
	Substituting these formulas into \eqref{wst} yields
	\begin{align*}
		\omega_\text{St}(E_i, F_j) 
		%%%
		%%%
		&=  -\frac{i}{2} \sum_{k=1}^q a_{kk} (dz_k \wedge d\bar{z}_k )(F_j, E_i) \\
		&\quad
		- \frac{i}{2} \sum_{1 \leq k < l \leq q} a_{kl} (dz_k \wedge d\bar{z}_l + dz_l \wedge d\bar{z}_k)(F_j, E_i) \\
		&\quad- \frac{i}{2} \sum_{1 \leq k\leq q < l \leq n} a_{kl} (dz_k \wedge d\bar{z}_l + dz_l \wedge d\bar{z}_k)(F_j, E_i) \\
		%%%
		%%%
		&=  \frac{t_j^* \cosh\sqrt{y}}{y} \biggl(\cosh\sqrt{y} - \frac{\sinh\sqrt{y}}{\sqrt{y}}\biggr) \\*
		&\quad \cdot \Bigl(
		\sum_{k=1}^q a_{kk}a_k\delta_{ki} 
		+ \sum_{1 \leq k < l \leq q} a_{kl} (a_k\delta_{li} + a_l \delta_{ki})  		
		+ \sum_{1 \leq k\leq q < l \leq n} a_{kl} \delta_{ki} t^*_l  
		\Bigr)\\*
		&\quad + \frac{\cosh\sqrt{y}\sinh\sqrt{y}}{\sqrt{y}} \sum_{1 \leq k\leq q < l \leq n} a_{kl} \delta_{ki}\delta_{jl}  \\
		%%%
		%%%
		&=  \frac{t_j^* \cosh\sqrt{y}}{y} \biggl(\cosh\sqrt{y} - \frac{\sinh\sqrt{y}}{\sqrt{y}}\biggr) 
		\Bigl(\sum_{k=1}^q a_{ki}a_k + \sum_{l = q+1}^n a_{il} t^*_l \Bigr) \\*
		&\quad + \frac{\cosh\sqrt{y}\sinh\sqrt{y}}{\sqrt{y}} a_{ij}  \\
		%%%
		%%%
		&=  \frac{t_j^* \cosh\sqrt{y}}{y} \biggl(\cosh\sqrt{y} - \frac{\sinh\sqrt{y}}{\sqrt{y}}\biggr) \\
		&\quad \cdot \Biggl[\sum_{k=1}^q \biggl(\biggl(\delta_{ki}a_k+ \frac{a_k^2a_i}{y} \tanh^2\sqrt{y}\biggr)v^\prime + \frac{4a_k^2a_i}{y}\sinh^2\sqrt{y} \, v^{\prime\prime}\biggr) \\
		&\qquad + \sum_{l = q+1}^n \biggl( \frac{a_i(t_l^*)^2}{y} \tanh^2\sqrt{y}\,v^\prime + \frac{4a_i(t^*_l)^2}{y}\sinh^2\sqrt{y} \, v^{\prime\prime}\biggr) \Biggr]\\
		&\quad + \frac{\cosh\sqrt{y}\sinh\sqrt{y}}{\sqrt{y}} \biggl(\frac{a_it_j^*}{y} \tanh^2\sqrt{y}\, v^\prime + \frac{4a_it^*_j}{y}\sinh^2\sqrt{y} \, v^{\prime\prime}\biggr)  \\
		%%%
		%%%
		&=  \frac{a_i t_j^* \cosh\sqrt{y}}{y} \biggl(\cosh\sqrt{y} - \frac{\sinh\sqrt{y}}{\sqrt{y}}\biggr) \\*
		&\quad \cdot \Biggl[\biggl(1+ \frac{\lvert a\rvert^2 }{y} \tanh^2\sqrt{y}\biggr)v^\prime + \frac{4\lvert a \rvert^2 }{y}\sinh^2\sqrt{y} \, v^{\prime\prime} \\*
		&\qquad + \biggl(\frac{\lvert t^*\rvert^2}{y} \tanh^2\sqrt{y}\, v^\prime + \frac{4\lvert t^*\rvert^2}{y}\sinh^2\sqrt{y} \, v^{\prime\prime}\biggr) \Biggr]\\*
		&\quad + \frac{a_it_j^*\cosh\sqrt{y}}{y} \frac{\sinh\sqrt{y}}{\sqrt{y}} \bigl( \tanh^2\sqrt{y}\, v^\prime + 4\sinh^2\sqrt{y} \, v^{\prime\prime}\bigr)  \\
		%%%
		%%%
		&=  \frac{a_it_j^* \cosh\sqrt{y}}{y} \biggl(\cosh\sqrt{y} - \frac{\sinh\sqrt{y}}{\sqrt{y}}\biggr) 
		\bigl( v^\prime + \tanh^2\sqrt{y}\, v^\prime + 4 \sinh^2\sqrt{y} \, v^{\prime\prime} \bigr)\\
		&\quad + \frac{a_it_j^* \cosh\sqrt{y}}{y} \frac{\sinh\sqrt{y}}{\sqrt{y}} \bigl( \tanh^2\sqrt{y}\, v^\prime + 4\sinh^2\sqrt{y} \, v^{\prime\prime}\bigr)  \\
		%%%
		%%%
		&=  \frac{a_it_j^* \cosh\sqrt{y}}{y} \biggl(\cosh\sqrt{y}\,
		\bigl( v^\prime + \tanh^2\sqrt{y}\, v^\prime + 4 \sinh^2\sqrt{y} \, v^{\prime\prime} \bigr) - \frac{\sinh\sqrt{y}}{\sqrt{y}} v^\prime \biggr)\\
		%%%
		%%%
		&=  \frac{a_it_j^* \cosh^2\sqrt{y}}{y} \biggl(
		\biggl( 1 - \frac{\tanh\sqrt{y}}{\sqrt{y}} + \tanh^2\sqrt{y}\biggr) v^\prime + 4 \sinh^2\sqrt{y} \, v^{\prime\prime} \biggr)
	\end{align*}
	for $i= 1, \dots, \dimL$ and $j=q+1, \dots, n$. 
	Since $t^*$ is nonzero by assumption, $y$ is positive, which further implies that $v^\prime, v^{\prime\prime} > 0$ \cite[\Prop 6]{Stenzel1993} and {$1 - \frac{\tanh \sqrt{y}}{\sqrt{y}} + \tanh^2\sqrt{y} > 0$}. Additionally, this ensures that $t_j^* \neq 0$ for at least one $j \in \{q+1, \dots, n\}$. Consequently, $\omega_\text{St}(E_i, F_j)$ vanishes for all $j = q+1, \dots, n$ if and only if $a_i=0$. 
	In other words, we have $\omega_\text{St}(E_i, F_j) = 0$ for all $i=1, \dots, \dimL$ and $j=q+1, \dots, n$ if and only if $\mu(p^*) =  \sum_{l=1}^{\dimL} a_l(0) \e^l = 0$.
	
	As $p^* \in L$ was arbitrary, $\mu = 0 \in \Omega^1(L)$ is a necessary condition for $N^*L + \mu$ to be Lagrangian. According to the proof of \cite[\Thm 3.1]{CaliB}, it is also sufficient.
\end{proof}

\autoref{prop:Lag} implies that twisting $N^*L$ by $1$-forms on $L$ does not provide any new examples. In fact, all possible special Lagrangians constructed in this way were already described in \cite{CaliB}. This shows that special Lagrangians of the form $N^*L$ in $T^*S^n$ are much more rigid than those in $T^*\R^n$. Nevertheless, let us capture this result in the following corollary. 

\begin{cor}
	The submanifold $N^*L + \mu$ is special Lagrangian in $T^*S^n$ if and only if $L^{\dimL}$ is austere in $S^n$ and $\mu = 0$. 
\end{cor}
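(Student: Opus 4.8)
The plan is to deduce the corollary by combining \autoref{prop:Lag} with the characterization of the untwisted conormal bundle established in \cite{CaliB}. Recall that $N^*L + \mu$ is special Lagrangian precisely when it satisfies \emph{both} the Lagrangian condition $\omega_{\text{St}}\vert_{N^*L + \mu} = 0$ and the special condition $\im(\exp(-i\theta)\Omega)\vert_{N^*L + \mu} = 0$.

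The first step is the observation that every special Lagrangian submanifold is in particular Lagrangian. Hence, if $N^*L + \mu$ is special Lagrangian, then \autoref{prop:Lag} immediately forces $\mu = 0$, reducing the problem to the untwisted bundle $N^*L = N^*L + 0$. For this case I would invoke \cite[\Thm 3.1]{CaliB}, which asserts that $N^*L$ is special Lagrangian in $T^*S^n$ if and only if $L^{\dimL}$ is austere in $S^n$; here austerity --- the symmetry about the origin of the eigenvalues of each shape operator $A^\nu$ --- is exactly the condition that forces $\im\,\Omega$ to vanish on $N^*L$ once it is already Lagrangian.

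Assembling the two directions then yields the biconditional. For the forward implication, special Lagrangian $\Rightarrow$ Lagrangian $\Rightarrow \mu = 0$ by \autoref{prop:Lag}, and thereafter $N^*L$ special Lagrangian $\Rightarrow L$ austere by \cite{CaliB}. For the converse, $\mu = 0$ together with $L$ austere yields that $N^*L + \mu = N^*L$ is special Lagrangian, once more by \cite{CaliB}. I do not anticipate any genuine obstacle, since all of the analytic content has already been absorbed into \autoref{prop:Lag} (which rules out every nontrivial twist) and into the cited austerity theorem; the corollary is a direct assembly of these two facts.
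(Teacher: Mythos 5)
Your proposal is correct and matches the paper's (implicit) argument exactly: the corollary is obtained by combining \autoref{prop:Lag}, which forces $\mu = 0$ for any Lagrangian twist, with the result of \cite[\Thm 3.1]{CaliB} that $N^*L$ is special Lagrangian in $T^*S^n$ if and only if $L$ is austere. No further comment is needed.
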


\section{(Co-)associative submanifolds of \texorpdfstring{$\Lambda^2_{-}(T^*X)$ with the Bryant--Salamon metric for $X^4 = S^4, \mathbb{CP}^2$}{}}\label{sec:coassNEW}

Let us proceed to associative and coassociative submanifolds of the space of anti-self-dual $2$-forms $\Lambda^2_{-}(T^*X)$, where $X^4$ represents either the $4$-di\-men\-sio\-nal sphere $S^4$ or the complex projective plane $\mathbb{CP}^2$ with the standard metrics.
To begin with, we equip $M^7 = \Lambda_{-}^2(T^*X^4)$ with a parallel $\Gtwo$-structure following \cite{BryantSalamon1989}. 

We equip $M^7$ with the connection $\nabla$ induced by the Levi-Civita connection on $(X^4,g)$. This provides a canonical splitting of its tangent space $T_{\omega} M \cong \mathcal{H}_{\omega} \oplus \mathcal{V}_{\omega}$ into the horizontal and vertical spaces for any $\omega \in M$. We can identify $\mathcal{H}_{\omega}$ with the tangent space $T_{\pi(\omega)}X$ of $X$ via the linear isomorphism
$\Hor_{\omega} = (\pi_*\vert_{\mathcal{H}_{\omega}})^{-1}: T_{\pi(\omega)}X \rightarrow \mathcal{H}_{\omega}$, where $\pi: M^7 = \Lambda_{-}^2(T^*X) \rightarrow X$ denotes the projection onto the base.
On the other hand, $\mathcal{V}_{\omega}$ can be identified with the fiber \smash{$M_{\pi(\omega)} = \Lambda_{-}^2(T^*_{\pi(\omega)}X)$} through the linear isomorphism {$\Ver_{\omega}: M_{\pi(\omega)} \rightarrow \mathcal{V}_{\omega} = T_\omega (M_{\pi(\omega)} ),\, \sigma \mapsto \frac{d}{dt} (\omega +t \sigma)\vert_{t=0}$}.
Due to these identifications, the metric $g$ on $X$ induces metrics \smash{$g_{\mathcal{H}}$} and \smash{$g_{\mathcal{V}}$} with canonical volume forms \smash{$\vol_{\mathcal{H}}$} and \smash{$\vol_{\mathcal{V}}$} on $\mathcal{H}$ and $\mathcal{V}$, respectively. 
\nocite{Wendl}

\begin{satz}[{\cite[\Thm 4.1]{BryantSalamon1989}}]\label{BSEF}
	Let $(X^4,g)$ be either $S^4$ with the standard round metric or $\mathbb{CP}^2$ with the Fubini-Study metric, and let $r$ denote the radial coordinate in the vertical fibers. Then there exist positive functions $u= u(r)$ and $v=v(r)$ such that 
	\begin{align*}
		g_{M^7} = u^2 g_{\mathcal{H}} \oplus v^2 g_{\mathcal{V}}
	\end{align*}
	defines a complete metric on $M^7 = \Lambda_{-}^2(T^*X^4)$ with holonomy equal to $\Gtwo$. Its fundamental $3$-form $\varphi$ is given by
	\begin{align*}
		\varphi = v^3\, \vol_{\mathcal{V}} + u^2 v\, d\theta,
	\end{align*}
	where $\theta_\omega = \pi^*\omega$ ($\omega \in M$) is the canonical soldering $2$-form on $M$. 
\end{satz}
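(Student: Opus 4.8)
The plan is to carry out the Bryant--Salamon bundle construction. I would write the candidate $3$-form $\varphi = v^3\,\vol_{\mathcal{V}} + u^2 v\, d\theta$ explicitly in an adapted local frame, check that it defines a genuine metric-compatible $\Gtwo$-structure at each point, impose the torsion-free conditions $d\varphi = 0$ and $d(*\varphi) = 0$, reduce these to a system of ordinary differential equations in the fiber radius $r$, solve them using the self-dual Einstein geometry of $S^4$ and $\mathbb{CP}^2$, and finally verify completeness and that the holonomy is all of $\Gtwo$.

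First I would fix a local oriented orthonormal coframe $e^0, e^1, e^2, e^3$ on $X^4$ and trivialize $\Lambda^2_-(T^*X)$ by the standard anti-self-dual basis $\omega_1 = e^0\wedge e^1 - e^2\wedge e^3$, $\omega_2 = e^0\wedge e^2 - e^3\wedge e^1$, $\omega_3 = e^0\wedge e^3 - e^1\wedge e^2$. The Levi-Civita connection induces an $\mathfrak{so}(3)$-valued connection $1$-form $(\rho_{ij})$ with $\nabla\omega_i = \sum_j\rho_{ij}\,\omega_j$, and since the connection is torsion-free one has $d\omega_i = \sum_j\rho_{ij}\wedge\omega_j$ for the honest $2$-forms. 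Writing $a = (a_1,a_2,a_3)$ for the linear fiber coordinates and $b_i = da_i - \sum_j\rho_{ij}a_j$ for the covariant vertical $1$-forms, the soldering form becomes $\theta = \sum_i a_i\,\pi^*\omega_i$ and a short computation collapses the connection terms to give the clean expression $d\theta = \sum_i b_i\wedge\pi^*\omega_i$, while $\vol_{\mathcal{V}} = b_1\wedge b_2\wedge b_3$. Thus $\varphi = v^3\, b_1\wedge b_2\wedge b_3 + u^2 v\sum_i b_i\wedge\pi^*\omega_i$, and at a point this is exactly the flat model $\Gtwo$-form on $\R^7 = \R^3\oplus\R^4 \cong \im\,\Oct$ attached to the splitting into vertical directions ($\R^3$, coordinatized by the $b_i$) and horizontal directions ($\R^4$), rescaled by $u$ and $v$. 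This fixes the associated metric to be $u^2 g_{\mathcal{H}}\oplus v^2 g_{\mathcal{V}}$ and gives $\psi = *\varphi$ the expected horizontal-plus-mixed shape.

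Next I would differentiate. Because the connection term was already absorbed into $b_i$, the curvature of $\Lambda^2_-$ enters only through $db_i$, via the structure equation $db_i = -\sum_j\rho_{ij}\wedge b_j + \sum_j R_{ij}\,a_j$ with $R_{ij} = d\rho_{ij} + \sum_k\rho_{ik}\wedge\rho_{kj}$ the curvature $2$-form. The hard part will be controlling this curvature term: in general it couples the horizontal and vertical directions in a way that does not close, but for a self-dual Einstein base the curvature operator restricted to $\Lambda^2_-$ is a constant (scalar-curvature) multiple of the identity, so $R_{ij}$ reduces to $c\,\pi^*\omega_k$ (cyclically, with $c$ proportional to the scalar curvature). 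This is precisely why only $S^4$ and $\mathbb{CP}^2$ occur. After this reduction everything re-expresses in the coframe $\{b_i,\pi^*\omega_i\}$, and the two conditions $d\varphi = 0$ and $d(*\varphi) = 0$ become a compatible coupled first-order system for $u(r)$ and $v(r)$, which I would integrate explicitly; the solutions are algebraic in $1 + c\,r^2$.

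Finally, a torsion-free $\Gtwo$-structure gives $\Hol\subseteq\Gtwo$, so the metric is automatically Ricci-flat; it remains to establish completeness and to show the holonomy is not a proper subgroup. For completeness I would examine the two ends: near the zero section the apparent degeneracy at $r = 0$ is a polar-coordinate artifact and the chosen solution extends smoothly across it, while as $r\to\infty$ the radial length integral of the metric diverges, giving geodesic completeness. For the holonomy I would first check irreducibility (the curvature leaves no proper subspace parallel, so the metric is not locally a product), which in the odd dimension $7$ already excludes the unitary holonomies $\U(m), \SU(m)$ and the quaternionic holonomies $\mathrm{Sp}(m), \mathrm{Sp}(m)\mathrm{Sp}(1)$ from Berger's list; since a Ricci-flat irreducible symmetric space is flat and our metric is non-flat, the symmetric case is excluded as well, so Berger's theorem leaves only $\SO(7)$ and $\Gtwo$, forcing $\Hol = \Gtwo$.
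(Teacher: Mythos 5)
The paper does not actually prove this statement: it is imported verbatim from Bryant--Salamon \cite[Thm.~4.1]{BryantSalamon1989} and used as a black box, so there is no internal proof to compare yours against. Your outline is, in substance, the original Bryant--Salamon argument, and it is structurally sound: expressing the fiber directions through the covariant coordinates $b_i = da_i - \sum_j \rho_{ij}a_j$ so that $d\theta = \sum_i b_i\wedge\pi^*\omega_i$; recognizing $\varphi = v^3\,b_1\wedge b_2\wedge b_3 + u^2v\sum_i b_i\wedge\pi^*\omega_i$ pointwise as the flat model $\Gtwo$-form on $\R^3\oplus\R^4$, which is exactly what forces the associated metric to be $u^2 g_{\mathcal{H}}\oplus v^2 g_{\mathcal{V}}$; using the self-dual Einstein condition to make the curvature of $\Lambda^2_-$ close up into the span of the $\pi^*\omega_k$ so that $d\varphi = 0 = d(*\varphi)$ collapses to an ODE system in $r$; and the Berger-list argument that an irreducible, non-symmetric, torsion-free $\Gtwo$-structure has holonomy exactly $\Gtwo$. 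Two caveats keep this a plan rather than a proof: the substantive computations --- verifying that the first-order system for $(u,v)$ is consistent and explicitly integrable with a positive solution that extends smoothly across $r=0$ and satisfies $\int v\,dr = \infty$, and checking irreducibility of the resulting metric --- are asserted rather than carried out; and your aside that self-dual Einstein-ness ``is precisely why only $S^4$ and $\mathbb{CP}^2$ occur'' implicitly relies on Hitchin's classification of compact self-dual Einstein $4$-manifolds of positive scalar curvature, which should be cited if you want that remark to carry weight.
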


Let us now restrict the vector bundle $M^7 = \Lambda_{-}^2(T^*X^4) \rightarrow X$ to an oriented immersed submanifold $L^2 \subset X^4$ and fix some oriented local orthonormal adapted frame $(e_1, e_2, \nu_3, \nu_4)$ along $L$ with dual coframe \smash{$(e^1, e^2, \nu^3, \nu^4)$}.
The anti-self-dual $2$-forms $f^1 = e^1 \wedge e^2 - \nu^3 \wedge \nu^4$, $f^2 = e^1 \wedge \nu^3 - \nu^4 \wedge e^2$ and $f^3 = e^1 \wedge \nu^4 - e^2 \wedge \nu^3$ locally trivialize $\Lambda_{-}^2(T^*X)\vert_L$. We denote the horizontal lifts of the tangent and normal vectors in $TX\vert_L$ to $\mathcal{H}$ by $\bar{e}_i = \Hor\,e_i$, $\bar{\nu}_j = \Hor\,\nu_j,\, i=1,2, j=3, 4$, and the vertical lifts of the anti-self-dual 2-forms on $X$ to $\mathcal{V}$ by $\check{f}^k,\, k=1, 2, 3$. Furthermore, we refer to their dual horizontal and vertical $1$-forms as $\bar{e}^i, \bar{\nu}^j$ and $\check{f}_k$, respectively.
The following diagram provides a compact illustration of the situation described for an $\omega \in \pi^{-1}(L) \subset M$: 
\noindent\begin{equation*}
	\begin{tikzcd}[column sep=-0.5em, row sep=1em]
		& \spn \{\bar{e}_1, \bar{e}_2, \bar{\nu}_3, \bar{\nu}_4\}\vert_{\omega} \arrow[d, equal, shorten=1mm] && \spn \{\check{f}^1, \check{f}^2, \check{f}^3\}\vert_{\omega} \arrow[d,equal, shorten=1mm]\\
		T_{\omega} M = T_{\omega}\bigl(\Lambda_{-}^2 (T^*X)\bigr) \cong &\mathcal{H}_{\omega} &\oplus& \mathcal{V}_{\omega} \\
		&&&\\
		&T_{\pi(\omega)}X \arrow[uu, "\Hor_{\omega}", "\cong"'] && M_{\pi(\omega)} \arrow[uu, "\Ver_{\omega}", "\cong"'] = \Lambda_{-}^2(T^*_{\pi(\omega)}X) \\
		&\spn \{e_1, e_2, \nu_3, \nu_4\}\vert_{\pi(\omega)} \arrow[u, equal, shorten=1mm] && \spn \{f^1, f^2, f^3\}\vert_{\pi(\omega)} \arrow[u,equal, shorten=0.5mm]
	\end{tikzcd}
\end{equation*}
As a result, the fundamental 3-form $\varphi$ in \autoref{BSEF} and its Hodge dual $\psi = * \varphi$ restricted to $L$ are locally given by 
\begin{align}
	\varphi &= v^3(\check{f}_1\wedge \check{f}_2 \wedge \check{f}_3) + u^2v\, \check{f}_1 \wedge (\bar{e}^1\wedge \bar{e}^2 - \bar{\nu}^3\wedge \bar{\nu}^4) \nonumber\\*
	&\quad + u^2v\, \check{f}_2 \wedge (\bar{e}^1\wedge \bar{\nu}^3 - \bar{\nu}^4\wedge \bar{e}^2) + u^2v\, \check{f}_3 \wedge (\bar{e}^1\wedge \bar{\nu}^4 - \bar{e}^2\wedge \bar{\nu}^3) \label{eq:varphi}
	\intertext{and} 
	\psi &= u^4(\bar{e}^1\wedge \bar{e}^2 \wedge\bar{\nu}^3\wedge \bar{\nu}^4) - u^2v^2\, \check{f}_2 \wedge \check{f}_3 \wedge (\bar{e}^1\wedge \bar{e}^2 - \bar{\nu}^3\wedge \bar{\nu}^4)\nonumber \\*
	&\quad- u^2v^2\, \check{f}_3 \wedge \check{f}_1 \wedge (\bar{e}^1\wedge \bar{\nu}^3 - \bar{\nu}^4\wedge \bar{e}^2) - u^2v^2\, \check{f}_1 \wedge \check{f}_2 \wedge (\bar{e}^1\wedge \bar{\nu}^4 - \bar{e}^2\wedge \bar{\nu}^3)\label{eq:*varphi}
\end{align}
\cite[(17), (18)]{CaliB}. 

Since $f^1$ is invariant under change of frame, it is globally defined on $L$. In fact, it can be written as $f^1= \vol_L - *_{X}\vol_L$. It thus spans a rank 1 bundle $E = \spn\{f^1\}$, whose orthogonal complement \smash{$F = E^\perp$}\vphantom{$\int^B$} in $\Lambda^2_{-}(T^*X)\vert_L$ is locally represented by \smash{$F \stackrel{\text{loc}}{=} \spn\{f^2, f^3\}$}.
The total spaces of these bundles are 3- and 4-dimensional submanifolds of $M^7=\Lambda^2_{-}(T^*\R^4)$, respectively. 

We examine the spaces 
\begin{align*}
	X_{\sigma}^E &= \bigl\{(\pointL, \eta + \sigma({\pointL})) \in \Lambda_{-}^2(T^*X^4)\vert_L \, \big| \, \pointL \in L,\, \eta \in E_{\pointL} \bigr\} \qquad (\text{\enquote{$E + \sigma$}}), \\ 
	X_{\eta}^F &= \bigl\{(\pointL, \eta({\pointL}) + \sigma) \in \Lambda_{-}^2(T^*X^4)\vert_L \, \big| \, \pointL \in L,\, \sigma \in F_{\pointL} \bigr\} \qquad (\text{\enquote{$\eta + F$}})
\end{align*}
for sections $\sigma \in \Gamma(F)$ and $\eta \in \Gamma(E)$. 
This is a \enquote{twisting} of the bundle $E$ ($F$) over $L$ obtained by affinely translating each fiber $E_{\pointL}$ ($F_{\pointL}$) by a vector $\sigma({\pointL}) \in F_{\pointL}$ ($\eta({\pointL}) \in E_{\pointL}$) in the orthogonal complement. 
Our goal is to determine necessary and sufficient conditions on $L,\, \sigma$ and $\eta$ so that $E + \sigma$ and $\eta + F$ are associative and coassociative in $M^7 = \Lambda^2_{-}(T^*X^4)$. 
We begin by establishing some needed formulas and constructing a holomorphic structure on $F$.

\begin{lemma}\label{lemma:nablaf}
	Let $\nabla$ denote the connection on $\Lambda^2_-(T^*X)\vert_L$ induced by the Levi-Civita connection on $X^4$. Using a normal frame \eqref{eq:defnormalcoords} at any $p^* \in L$ yields
	\begin{align*}
		\nabla_{e_i} f^1 &= (A_{i1}^4 - A_{i2}^3)f^2 + (-A^3_{i1}-A_{i2}^4)f^3, \\*
		\nabla_{e_i} f^2 &= (A_{i2}^3 - A_{i1}^4)f^1, \\*
		\nabla_{e_i} f^3 &= (A_{i2}^4 + A_{i1}^3)f^1
	\end{align*}
	at that point for $i=1,2$. 
\end{lemma}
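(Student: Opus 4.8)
The plan is to prove this by a direct computation, since all three identities follow from the Leibniz rule together with the coframe derivatives recorded in \eqref{eq:normalcoord}. The connection $\nabla$ on $\Lambda^2_-(T^*X)\vert_L$ is the one induced by the Levi-Civita connection acting as a derivation on the exterior algebra. Because $\nabla$ is metric and torsion-free, it is compatible with the metric and the volume form and hence commutes with the Hodge star; in particular it preserves the anti-self-dual subbundle $\Lambda^2_-(T^*X)$. This guarantees a priori that each $\nabla_{e_i} f^k$ is again a section of $\Lambda^2_-(T^*X)\vert_L$, so it must lie in $\spn\{f^1, f^2, f^3\}$, and the only task is to identify the coefficients.

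First I would fix $p^* \in L$, choose a normal frame so that \eqref{eq:normalcoord} holds at that point, and specialize the indices to the present situation $\dimL = 2$, $n = 4$, so that $\nabla_{e_i} e^j = -A^3_{ij}\nu^3 - A^4_{ij}\nu^4$ and $\nabla_{e_i} \nu^k = A^k_{i1} e^1 + A^k_{i2} e^2$. I would then expand, for instance,
\begin{align*}
	\nabla_{e_i} f^1 = (\nabla_{e_i} e^1)\wedge e^2 + e^1 \wedge (\nabla_{e_i} e^2) - (\nabla_{e_i}\nu^3)\wedge \nu^4 - \nu^3 \wedge (\nabla_{e_i}\nu^4),
\end{align*}
substitute \eqref{eq:normalcoord}, and collect the resulting mixed terms according to the four basis $2$-forms $e^1\wedge\nu^3,\, e^2\wedge\nu^4,\, e^1\wedge\nu^4,\, e^2\wedge\nu^3$. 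The coefficients regroup into $(A^4_{i1}-A^3_{i2})(e^1\wedge\nu^3 + e^2\wedge\nu^4)$ and $(-A^3_{i1}-A^4_{i2})(e^1\wedge\nu^4 - e^2\wedge\nu^3)$, which are exactly $(A^4_{i1}-A^3_{i2})f^2$ and $(-A^3_{i1}-A^4_{i2})f^3$, giving the first formula. The computations for $f^2$ and $f^3$ are entirely analogous: after expanding and substituting, the terms organize themselves into multiples of $e^1\wedge e^2$ and $\nu^3\wedge\nu^4$ that combine into $e^1\wedge e^2 - \nu^3\wedge\nu^4 = f^1$, yielding the scalar multiples $(A^3_{i2}-A^4_{i1})f^1$ and $(A^3_{i1}+A^4_{i2})f^1$ stated in the lemma.

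I expect no genuine analytic difficulty here; the only real obstacle is bookkeeping. One must track signs carefully through each wedge product (using the sign conventions $\nu^4\wedge e^2 = -e^2\wedge\nu^4$, etc.) and correctly recognize the collected linear combinations as $f^1$, $f^2$, $f^3$. The symmetry $A^k_{ij} = A^k_{ji}$ plays no role since the indices $i, j$ are fixed throughout. As a consistency check, the conceptual observation above—that $\nabla$ preserves $\Lambda^2_-$ and in fact also respects the splitting into the globally defined line $E = \spn\{f^1\}$ and its complement $F$—explains why $\nabla_{e_i} f^1$ has no $f^1$-component while $\nabla_{e_i} f^2$ and $\nabla_{e_i} f^3$ are pure multiples of $f^1$, matching the stated block structure.
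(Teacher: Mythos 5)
Your proof is correct and is essentially the paper's own proof: the paper simply defers to the identical Leibniz-rule computation in the flat case (\cite[Prop.~4.1.1]{CaliA}), and I have checked that your expansion via \eqref{eq:normalcoord} reproduces all three stated coefficient combinations. One small caveat on your closing ``consistency check'': the claim that $\nabla$ respects the splitting $E\oplus F$ is not literally true away from $p^*$ --- only $\nabla f^1\perp f^1$ (hence $\nabla f^1\in\Gamma(F)$) holds frame-independently, whereas the vanishing of the $f^3$-component of $\nabla_{e_i}f^2$ (and vice versa) is a consequence of the normal-frame condition at $p^*$, as the general formulas with connection coefficients $\Gamma^1_{j2}-\Gamma^3_{j4}$ in \autoref{sec:examples} make clear; since this remark is not load-bearing, it does not affect the validity of the proof.
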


\begin{proof}
	The proof is the same as for \cite[\Prop 4.1.1]{CaliA}.
\end{proof}

With $L$ being a $2$-dimensional oriented immersed submanifold of $X^4$, the tangent bundle \smash{$TL \stackrel{\text{loc}}{=}\spn\{e_1, e_2\}$}\vphantom{$\int^B$} and normal bundle \smash{$NL \stackrel{\text{loc}}{=}\spn\{\nu_3, \nu_4\}$} can be endowed with natural complex structures $J_T$ and $J_N$, locally given by $J_Te_1 = e_2, J_Te_2 = - e_1$ and $J_N\nu_3 = \nu_4,\allowbreak J_N \nu_4 = - \nu_3$. 
In fact, since $L$ is of dimension $2$, the Nijenhuis tensor of $J_T$ vanishes automatically, turning it into a complex structure on $L$.
As a consequence, it makes sense to talk about $L$ being \textbf{negative superminimal} in $X$, which means that $A^{J_N\nu} = - J _TA^\nu$ holds for all normal vector fields $\nu \in \Gamma(NL)$. 

Let $g_L$ denote the metric on $L$ induced by the metric on $X$. As $g_L$ is $J_T$-invariant, it defines a Hermitian metric on $(L,J_T)$. 
Furthermore, all $k$-forms on $L$ are trivial for $k > 3$, implying that its associated $2$-form $\omega_L$ must be closed. Hence, $(L, J_T, g_L, \omega_L)$ forms a complex $1$-dimensional Kähler manifold. 
Similarly, the rank $2$ vector bundle \smash{$F \stackrel{\text{loc}}{=} \spn\{f^2, f^3\}$} can be endowed with a complex structure $J_F$ locally given by $J_F f^2 = f^3$ and $J_Ff^3 = - f^2$. This turns $\{f^2\}$ into a local complex frame for $F$ and, consequently, $F$ into a rank $1$ complex vector bundle over $L$. 

The Levi-Civita connection on $X$ induces connections $\nabla$ on $\Lambda^2_-(T^*X)\vert_L$ and $\nabla^F = \pi_F \circ \nabla$ on $F$, where $\pi_F: \Lambda^2_-(T^*X)\vert_L = E \oplus F \rightarrow F$ denotes the projection onto $F$. The preceding lemma combined with the fact that $f^1 $ is orthogonal to $F$ then shows that \smash{$\nabla^F_{e_i} f^k = \pi_F (\nabla_{e_i} f^k)= 0$} for $i=1,2$ and $k=2,3$ at any $p^* \in L$, using a normal frame at that point. From this, we deduce that $(\nabla^F_{e_i} J_F)(f^k) = \nabla^F_{e_i}(J_Ff^k) - J_F(\nabla^F_{e_i}f^k) = 0$, which is equivalent to $\nabla^F J_F = 0$ as the latter is a tensor. 
Consequently, the connection $\nabla^F$ is complex linear with respect to $J_F$, and, by setting \vphantom{$\int^{B^B}$}\smash{$\nabla^F_{X + i Y} \sigma \stackrel{\text{def}}{=} \nabla^F_X \sigma + J_F(\nabla^F_Y \sigma)$} for $X,Y \in T_{p}L$, $p \in L$ and $\sigma \in \Gamma(F)$, it becomes a complex connection. This allows us to define the operator \smash{$\bar{\partial}_F \stackrel{\text{def}}{=} (\nabla^F)^{0,1}: \Omega^{r,s} (F) \rightarrow \Omega^{r,s+1} (F)$}\vphantom{$\int^B$}.
As $\bar{\partial}_F$ satisfies the Leibniz rule, it defines a pseudo-holomorphic structure on $F$. However, since $L$ has complex dimension $1$, there exist no $(0,2)$-forms on $F$.
Thus, $\bar{\partial}_F^2: \Omega^{r,s} (F) \rightarrow \Omega^{r,s+2} (F) = \{0\}$ must be trivial, implying that $\bar{\partial}_F = (\nabla^F)^{0,1}$ actually defines a holomorphic structure on $F$, turning it into a holomorphic line bundle. Then a section $\sigma \in \Gamma(F)$ is holomorphic if $\bar{\partial}_F \sigma = (\nabla^F)^{0,1} \sigma = 0$.
See \cite[\Ch 5]{Joyce2007} and \cite[\Ch 9]{Moroianu2007} for background on Kähler manifolds and holomorphic vector bundles. 

\begin{satz}\label{thm:assNEU} Let $X^4=S^4$ or $\mathbb{CP}^2$. Then: 
	\begin{enumerate}
		\item The submanifold $E + \sigma$ is associative in $\Lambda_{-}^2(T^*X^4)$ if and only if $L^2$ is minimal in $X^4$ and $\sigma \in \Gamma(F)$ is holomorphic. \label{item:assNEU1}
		\item The submanifold $\eta + F$ is coassociative in $\Lambda_{-}^2(T^*X^4)$ if and only if $L^2$ is negative superminimal in $X^4$ and $\eta \in \Gamma(E)$ is parallel with respect to the induced connection $\nabla^E$ on $E$ from the Levi-Civita connection on $X^4$. \label{item:assNEU2}
	\end{enumerate}
\end{satz}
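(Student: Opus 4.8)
The plan is to verify the pointwise algebraic calibration conditions from Section~\ref{sec:calibratedgeom} directly on the tangent spaces of $E+\sigma$ and $\eta+F$, exploiting the horizontal/vertical splitting of $TM$ and the explicit expressions \eqref{eq:varphi} and \eqref{eq:*varphi} for $\varphi$ and $\psi$. As in the proof of \autoref{prop:Lag}, the conditions are pointwise, so I would fix an arbitrary $p^*\in L$, work in a normal frame \eqref{eq:defnormalcoords}, and parametrize each subbundle by base coordinates $u=(u_1,u_2)$ together with the appropriate fiber coordinate. For $E+\sigma$ I would write the immersion as $(u,s)\mapsto(x(u),\,s\,f^1(u)+\sigma(u))$ and compute the pushforwards $\bar{E}_i=(\Phi)_*(\partial_{u_i})$ and the vertical vector $(\Phi)_*(\partial_s)$; the horizontal derivatives of $f^1$ and of the components of $\sigma\in\Gamma(F)$ are supplied by \autoref{lemma:nablaf} and the relation $\nabla_{e_i}f^k$, which convert the frame derivatives into the second-fundamental-form coefficients $A^k_{ij}$. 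The same setup handles $\eta+F$ with fiber coordinates $(s_2,s_3)$ on $F$ and base twist $\eta(u)\in\Gamma(E)$.

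For part~\ref{item:assNEU1}, the cleanest route is condition~\ref{item:asscond}: the $3$-dimensional tangent space is associative iff $u\lrcorner v\lrcorner w\lrcorner\psi=0$ for a basis $\{u,v,w\}$, so I would insert the two horizontal tangent vectors and the one vertical tangent vector of $E+\sigma$ into $\psi$ from \eqref{eq:*varphi} and collect the resulting $1$-form. Each monomial of $\psi$ that survives must vanish; I expect the coefficients to split into two families, one proportional to $\Tr A^k$ (forcing $L$ minimal, by \autoref{rem:minimal}) and one involving the horizontal derivatives of $\sigma$'s components, which should organize precisely into the Cauchy--Riemann equations $(\nabla^F)^{0,1}\sigma=0$, i.e.\ holomorphicity of $\sigma$ with respect to $J_F$. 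For part~\ref{item:assNEU2}, I would use condition~\ref{item:coasscond}, namely $\varphi\vert_F=0$: restricting $\varphi$ from \eqref{eq:varphi} to the $4$-plane spanned by the two horizontal vectors and the two vertical $F$-directions and demanding it vanish. Here I anticipate the superminimality condition $A^{J_N\nu}=-J_TA^\nu$ to emerge from the mixed horizontal--vertical terms, while the terms carrying $\nabla_{e_i}\eta$ force $\eta$ to be parallel for $\nabla^E$, since $E$ is a real line bundle and its twist cannot be absorbed.

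The main obstacle will be the bookkeeping of the horizontal derivatives, since differentiating the trivializing $2$-forms $f^k$ mixes the $f^j$ via the off-diagonal $A^k_{ij}$ terms of \autoref{lemma:nablaf}, and the twisting sections contribute both an algebraic shift of the base point and a derivative term; keeping these contributions separate while contracting into $\varphi$ or $\psi$ is where sign and indexing errors are most likely. A secondary subtlety is that the pushforward of a horizontal coordinate vector is \emph{not} purely horizontal once the fiber is translated by $\sigma$ (resp.\ $\eta$), because $\bar{E}_i$ acquires a vertical component $\Ver(\nabla_{e_i}\sigma)$; I must carry this vertical correction through the contraction, as it is precisely what produces the $\bar\partial_F\sigma$ and $\nabla^E\eta$ terms rather than spurious vanishing. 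Once the tangent vectors are expressed in the frame $\{\bar e_i,\bar\nu_j,\check f^k\}$, the contraction is mechanical, and I would match the resulting coefficient equations against the definitions of minimal, negative superminimal, holomorphic and parallel to read off both implications simultaneously. Throughout, I would lean on the characterizations already collected in Section~\ref{sec:calibratedgeom} rather than any octonionic identification, matching the stated strategy of the paper.
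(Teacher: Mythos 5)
Your proposal follows essentially the same route as the paper's proof: a normal frame at an arbitrary $p^*\in L$, the immersion parametrized by base and fiber coordinates, \autoref{lemma:nablaf} to convert frame derivatives into second-fundamental-form coefficients, condition \ref{item:asscond} (contraction into $\psi$) for the associative case and condition \ref{item:coasscond} ($\varphi$ restricted to the tangent $4$-plane) for the coassociative case, with the resulting coefficient equations splitting into $\Tr A=0$ (resp.\ $A^{J_N\nu}=-J_TA^\nu$) and $\bar\partial_F\sigma=0$ (resp.\ $\nabla^E\eta=0$) exactly as in the paper. You also correctly flag the key subtlety that the pushforwards of the base coordinate vectors acquire vertical components, which is precisely where the holomorphicity and parallelism conditions arise.
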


\begin{rem}
	Note that the conditions on $L$, $\sigma$ and $\eta$ are the same as in the case of $\R^4$ in \cite{CaliA2}. 
\end{rem}

We split the proof into two parts for pedagogical reasons. 

\begin{proof}[Proof of \ref{item:assNEU1}] 
	We fix a point $(\pointL^*, \eta^* + \sigma({\pointL^*})) \in E + \sigma$, let $(e_1, e_2,  \nu_3, \nu_4)$ be a local orthonormal adapted frame along $L$ which is normal at $p^*$, and $f^1, f^2, f^3$ be the corresponding local anti-self-dual 2-forms. Moreover, we let $u = (u_1, u_2)$ be the coordinates for $L$ described in \autoref{sec:2ndfundform}, and $t_1$ be the coordinate on the fiber with respect to $f^1$. Then our fixed point has the coordinates $(0, t^*_1)$.
	
	The immersion of $E + \sigma$ into $M^7 = \Lambda^2_{-}(T^*X^4)$, $\Phi: X_{\sigma}^E \rightarrow M$, is locally given by 
	\begin{align*}
		\Phi: (u,t_1) \mapsto \bigl(x(u), t_1f^1(u) + \sigma(u)\bigr) = \bigl(x(u), t_1f^1(u) + a(u)f^2(u) + b(u)f^3(u)\bigr),
	\end{align*}
	where $x$ is the immersion of $L$ into $X^4$, and $a$ and $b$ are the coordinates of $\sigma$ with respect to the local trivialization \smash{$F \stackrel{\text{loc}}{=} \spn \{f^2, f^3\}$}. 
	Then the tangent space to $X_{\sigma}^E$ at $\omega^* = \Phi(0, t_1^*) \in M$ is spanned by $E_1= \Phi_*(\partial_{u_1}),\, E_2= \Phi_*(\partial_{u_2})$ and $F_1 = \Phi_*(\partial_{t_1})$, omitting the dependence on $(0,t^*)$.
	
	By \autoref{lemma:nablaf}, we have
	\begin{align*}
		\Ver_{\omega^*}(\nabla_{e_i} f^1) &= \Ver_{\omega^*}\bigl((A^4_{i1} - A^3_{i2})f^2 + (-A^3_{i1}-A^4_{i2})f^3\bigr) \\*
		&= (A^4_{i1} - A^3_{i2})\check{f}^2 + (-A^3_{i1}-A^4_{i2})\check{f}^3, \\
		\Ver_{\omega^*}(\nabla_{e_i} f^2) &= \Ver_{\omega^*}\bigl((A^3_{i2} - A^4_{i1})f^1\bigr) 
		= (A^3_{i2} - A^4_{i1})\check{f}^1, \\
		\Ver_{\omega^*}(\nabla_{e_i} f^3) &= \Ver_{\omega^*}\bigl((A^4_{i2} + A^3_{i1})f^1\bigr) 
		= (A^4_{i2} + A^3_{i1})\check{f}^1
	\end{align*}
	for $i=1,2$. 
	Using these formulas, we find that $E_i$ takes the form
	\begin{align*}
		E_i &= \bar{e}_i + \Ver_{\omega^*}\bigl(t_1^* \nabla_{e_i} f^1 + \nabla_{e_i}(af^2 + bf^3)\bigr) \\
		&= \bar{e}_i + t_1^* \Ver_{\omega^*}(\nabla_{e_i} f^1) + a \Ver_{\omega^*}(\nabla_{e_i} f^2)+ b \Ver_{\omega^*}(\nabla_{e_i} f^3) + a_i \check{f}^2 + b_i \check{f}^3\\*		
		&= \bar{e}_i + A_i \check{f}^1 + B_i \check{f}^2 + C_i \check{f}^3
	\end{align*}
	with 
	\begin{align*}
		A_i = a(A^3_{i2} - A^4_{i1}) + b(A^4_{i2} + A^3_{i1}), \quad
		B_i = t_1^* (A^4_{i1} - A^3_{i2})+ a_i, \quad
		C_i = t_1^*(-A^3_{i1}-A^4_{i2}) + b_i,
	\end{align*}
	and $a_i = \partial a/\partial u_i,\, b_i = \partial b/\partial u_i$ for $i=1,2$. 
	Lastly, the third basis vector is given by
	\begin{align*}
		F_1 &= \Phi_*\biggl(\frac{\partial}{\partial t_1}\biggr) = \check{f}^1. 
	\end{align*}
	
	Next, we determine when $E_2 \lrcorner E_1 \lrcorner F_1 \lrcorner \psi $ vanishes (\cf \ref{item:asscond}). 
	Using the formula
	\begin{align*}
		\psi &= u^4(\bar{e}^1\wedge \bar{e}^2 \wedge\bar{\nu}^3\wedge \bar{\nu}^4) - u^2v^2\, \check{f}_2 \wedge \check{f}_3 \wedge (\bar{e}^1\wedge \bar{e}^2 - \bar{\nu}^3\wedge \bar{\nu}^4)\nonumber \\*
		&\quad- u^2v^2\, \check{f}_3 \wedge \check{f}_1 \wedge (\bar{e}^1\wedge \bar{\nu}^3 - \bar{\nu}^4\wedge \bar{e}^2) - u^2v^2\, \check{f}_1 \wedge \check{f}_2 \wedge (\bar{e}^1\wedge \bar{\nu}^4 - \bar{e}^2\wedge \bar{\nu}^3)
	\end{align*}
	(see \eqref{eq:*varphi}), we compute
	\begin{align*}
		F_1 \lrcorner \psi &= - u^2v^2\bigl(-\check{f}_3 \wedge (\bar{e}^1\wedge \bar{\nu}^3 - \bar{\nu}^4\wedge \bar{e}^2) + \check{f}_2 \wedge (\bar{e}^1\wedge \bar{\nu}^4 - \bar{e}^2\wedge \bar{\nu}^3) \bigr), \\
		E_1 \lrcorner F_1 \lrcorner \psi 
		&= - u^2v^2(\check{f}_3 \wedge \bar{\nu}^3 - \check{f}_2 \wedge \bar{\nu}^4) \\
		&\quad - u^2v^2\bigl(-C_1 (\bar{e}^1\wedge \bar{\nu}^3 - \bar{\nu}^4\wedge \bar{e}^2) + B_1 (\bar{e}^1\wedge \bar{\nu}^4 - \bar{e}^2\wedge \bar{\nu}^3) \bigr), \\ 
		E_2 \lrcorner E_1 \lrcorner F_1 \lrcorner \psi 
		&= - u^2v^2(C_2\bar{\nu}^3 - B_2 \bar{\nu}^4 ) - u^2v^2(-C_1 \bar{\nu}^4 - B_1 \bar{\nu}^3 ) \\
		&= - u^2v^2\bigl((- B_1+ C_2)\bar{\nu}^3 + (-B_2-C_1)\bar{\nu}^4 \bigr).
	\end{align*}
	Since $u,v > 0$, $E_2 \lrcorner E_1 \lrcorner F_1 \lrcorner \psi$ equals zero if and only if the equations
	\begin{alignat}{9}
		&0 = - &B_1& + C_2\ &=&\ -& t_1^*& (A^4_{11} - A^3_{12}) - a_1 + t_1^*(-A^3_{12}-A^4_{22}) + b_2 \nonumber\\*
		&&&&=&\ - &t_1^*&(A^4_{11} + A^4_{22}) + \left(- a_1 + b_2 \right),\label{eq:asseqone} \\[0.5em]
		&0 = &B_2& + C_1 &=& &t_1^*& (A^4_{12} - A^3_{22}) + a_2 + t_1^*(-A^3_{11}-A^4_{12}) + b_1\nonumber\\*
		&&&&=&\ -& t_1^*&(A^3_{11} + A^3_{22}) + \left(a_2 + b_1 \right)\label{eq:asseqtwo}
	\end{alignat}
	are satisfied. 
	
	Given that $\eta^* \in E_{p^*}$ was arbitrary, the restricted tangent bundle $T X_\sigma^E \vert_{(X_\sigma^E)_{p^*}}$ has associative fibers if and only if \eqref{eq:asseqone} and \eqref{eq:asseqtwo} hold true at $p^*$ for all $t_1^* \in \R$. This is equivalent to the conditions
	\begin{enumerate}[label=(\Roman*)]
		\item $\Tr A^3= A^3_{11} + A^3_{22} = 0$ and $\Tr A^4= A^4_{11} + A^4_{22} = 0$ at $p^*$, or, in other words, $\Tr A \vert_{p^*}= 0$,
		\item $a_1 = b_2$ and $a_2 = - b_1$ at $p^*$. \label{condII} 
	\end{enumerate}
	
	As for \ref{condII}, recall that $\sigma$ is locally given by $\sigma =a f^2 + bf^3$ and that it is holomorphic if $\bar{\partial}_F \sigma = (\nabla^F)^{0,1} \sigma =0$. Since $e_1 + i e_2$ locally trivializes $(TL)^{0,1}$, this is equivalent to $\nabla^F_{e_1 + ie_2} \sigma =0$ .
	We compute
	\begin{align*}
		\nabla^F_{e_1 + i e_2} \sigma
		&= \nabla^F_{e_1} \sigma + J_F( \nabla^F_{e_2} \sigma) \\*
		&= \pi_F(\nabla_{e_1} \sigma ) + J_F\bigl(\pi_F(\nabla_{e_2} \sigma)\bigr) \\*
		&= \pi_F(a \nabla_{e_1} f^2 + b \nabla_{e_1}f^3 + a_1 f^2 + b_1 f^3)\\*
		&\quad+ J_F\bigl(\pi_F(a \nabla_{e_2} f^2 + b \nabla_{e_2}f^3 + a_2 f^2 + b_2 f^3)\bigr) \\*
		&= a_1 f^2 + b_1 f^3 + J_F(a_2 f^2 + b_2 f^3) \\*
		&= (a_1-b_2)f^2 + (a_2+b_1)f^3
	\end{align*}
	at $p^*$, where we used \autoref{lemma:nablaf} and the fact that $f^1$ is orthogonal to $F$. 
	Consequently, \ref{condII} is satisfied if and only if $\bar{\partial}_F \sigma$ vanishes at $p^*$.
	
	Since $p^* \in L$ was arbitrary, $ X_\sigma^E$ is associative in $M^7$, which is equivalent to \smash{$T X_\sigma^E \vert_{(X_\sigma^E)_{p}}$} having associative fibers for all $p \in L$, if and only if $L$ is minimal in $X$ and $\sigma$ is holomorphic.
\end{proof}

We proceed to the proof of the second statement.

\begin{proof}[Proof of \ref{item:assNEU2}] 
	We fix a point $(\pointL^*, \eta(p^*) + \sigma^*) \in \eta + F$, and let $f^1, f^2, f^3$ and $u = (u_1, u_2)$ be as before. Moreover, we let $t=(t_2,t_3)$ be the coordinates on the fiber with respect to the local trivialization \smash{$F \stackrel{\text{loc}}{=} \spn \{f^2, f^3\}$}. Then our fixed point has the coordinates $(0, t^*)$ for some $t^*= (t_2^*,t_3^*)$.
	
	The immersion of $\eta + F$ into $M$, $\Psi: X_{\eta}^F \rightarrow M$, is locally given by 
	\begin{align*}
		\Psi: (u, t) \mapsto \bigl(x(u), t_2f^3(u)+ t_3 f^3(u) + \eta(u)\bigr) = \bigl(x(u), t_2f^3(u)+ t_3 f^3(u) + \gamma(u)f^1(u)\bigr),
	\end{align*}
	where $x$ is the immersion of $L$ into $X^4$ and $\gamma \in C^\infty(L)$ is the globally defined function such that $\eta = \gamma f^1$. 
	Then the tangent space to $X_\eta$ at $\omega^* = \Psi(0, t^*) \in M$ is spanned by $E_1 = \Psi_*(\partial_{u_1}),\, E_2 = \Psi_*(\partial_{u_2}),\, F_2 = \Psi_*(\partial_{t_2})$ and $F_3 = \Psi_*(\partial_{t_3})$, omitting the dependence on $(0,t^*)$.
	
	As in the proof of the first statement, we use \autoref{lemma:nablaf} to compute $E_i$, and we find that 
	\begin{align*}
		E_i &= \bar{e}_i + \Ver_{\omega^*}\bigl(t_2^* \nabla_{e_i} f^2 + t_3^* \nabla_{e_i} f^3+ \nabla_{e_i}(\gamma f^1)\bigr)\\*
		&= \bar{e}_i + t_2^* \Ver_{\omega^*}(\nabla_{e_i} f^2) + t_3^* \Ver_{\omega^*}(\nabla_{e_i} f^3) + \gamma \Ver_{\omega^*}(\nabla_{e_i} f^1) + \gamma_i \check{f}^1 \\*
		&= \bar{e}_i + A_i\check{f}^1 + B_i \check{f}^2+ C_i \check{f}^3
	\end{align*}
	with
	\begin{align*}
		A_i = t_2^*(A_{i2}^3-A_{i1}^4) + t_3^*(A_{i2}^4+ A_{i1}^3) + \gamma_i, \quad
		B_i = \gamma(A_{i1}^4 -A_{i2}^3), \quad
		C_i = \gamma(-A_{i1}^3-A_{i2}^4),
	\end{align*}
	and $\gamma_i = \partial \gamma/\partial u_i$ for $i=1,2$. 
	Furthermore, $F_2$ and $F_3$ are given by
	\begin{align*}
		F_j &= \Psi_*\biggl(\frac{\partial}{\partial t_j} \biggr) = \check{f}^j, \qquad j=2, 3. 
	\end{align*}
	
	By \ref{item:coasscond}, $X_\eta^F$ is coassociative in $M$ if and only if $\varphi \vert_{X_\eta^F} =0$. Using the formula
	\begin{align*}
		\varphi &= v^3(\check{f}_1\wedge \check{f}_2 \wedge \check{f}_3) + u^2v\, \check{f}_1 \wedge (\bar{e}^1\wedge \bar{e}^2 - \bar{\nu}^3\wedge \bar{\nu}^4) \nonumber\\* 
		&\quad + u^2v\, \check{f}_2 \wedge (\bar{e}^1\wedge \bar{\nu}^3 - \bar{\nu}^4\wedge \bar{e}^2) + u^2v\, \check{f}_3 \wedge (\bar{e}^1\wedge \bar{\nu}^4 - \bar{e}^2\wedge \bar{\nu}^3)
	\end{align*}
	(see \eqref{eq:varphi}), we obtain $\varphi(F_2, F_3, \cdot) = v^3 \check{f}_1$, which implies $\varphi(F_2, F_3, E_i) = v^3 A_i$ for $ i=1,2$. 
	On the other hand, 
	\begin{align*}
		\varphi(E_1,E_2, \cdot) &= u^2v \check{f}_1 
		+ v^3\bigl((A_1B_2-A_2B_1)\check{f}_3 + (-A_1C_2+A_2C_1)\check{f}_2+(B_1C_2-B_2C_1)\check{f}_1 \bigr) \\*
		&\quad -u^2v(A_2 \bar{e}^2 + B_2\bar{\nu}^3+ C_2 \bar{\nu}^4) + u^2v(-A_1 \bar{e}^1 + B_1\bar{\nu}^4-C_1\bar{\nu}^3) \\
		&= \bigl(u^2v + v^3(B_1C_2 - B_2C_1)\bigr)\check{f}_1 + v^3(-A_1C_2+A_2C_1)\check{f}_2 + v^3(A_1B_2-A_2B_1)\check{f}_3 \\
		&\quad -u^2v \bigl(A_1 \bar{e}^1 + A_2 \bar{e}^2 + (B_2+C_1)\bar{\nu}^3 + (-B_1+C_2)\bar{\nu}^4\bigr)
	\end{align*}
	yields $\varphi(E_1,E_2 ,F_2) = v^3(-A_1C_2+A_2C_1)$ and $\varphi(E_1,E_2 ,F_3) = v^3(A_1B_2 - A_2B_1)$. 
	Since $v > 0$, we deduce that $\varphi $ vanishes on \smash{$T_{\omega^*}(\Psi(X_\eta^F))$} if and only if the conditions
	\begin{align*}
		A_1 = A_2 = 0, \quad -A_1C_2 + A_2C_1 = 0 \quad \text{ and }\quad A_1B_2-A_2B_1 =0
	\end{align*}
	are satisfied. As the last two follow from the first, this is equivalent to $A_i = 0$ for $ i=1,2$. 
	
	Since $\sigma^* \in F_{p^*}$ was arbitrary, the restricted tangent bundle \smash{$T X_\eta^F \vert_{(X_\eta^F)_{p^*}}$} has coassociative fibers if and only if $A_i(t) = t_2(A_{i2}^3-A_{i1}^4) + t_3(A_{i2}^4+ A_{i1}^3) + \gamma_i$ vanishes at $p^*$ for all $t \in \R^2$ and $i=1,2$.
	Define $\nu(t) = t_2 \nu_3 + t_3 \nu_4$ and $\nu^\perp(t) = J_N \nu(t) = - t_3 \nu_3 + t_2 \nu_4$.
	Then $A_i$ takes the form \smash{$A_i = A_{i2}^\nu - A_{i1}^{\nu^\perp} + \gamma_i$} for $i=1,2$.
	From the relations
	\begin{align*}
		A_i(\lambda t) &= A_{i2}^{\lambda\nu( t)} - A_{i1}^{\lambda\nu^\perp( t)} + \gamma_i = \lambda(A_{i2}^{\nu( t)} - A_{i1}^{\nu^\perp( t)} ) + \gamma_i, \\*
		A_i(\lambda t^\perp) &= A_{i2}^{\lambda\nu( t^\perp)} - A_{i1}^{\lambda\nu^\perp( t^\perp)} + \gamma_i = \lambda(A_{i2}^{\nu^\perp( t)} + A_{i1}^{\nu( t)} ) + \gamma_i
	\end{align*}
	for $(t_2, t_3)^\perp = (-t_3, t_2)$ and $\lambda \in \R$, we deduce that $A_i$ vanishes 
	for all $t$
	if and only if
	\begin{align*}
		A_{i2}^{\nu} - A_{i1}^{\nu^\perp} = 0, \quad A_{i2}^{\nu^\perp} + A_{i1}^{\nu} = 0 \quad \text{ and } \quad \gamma_i= 0.
	\end{align*}
	Combining these conditions yields
	\begin{align*}
		A^{J_N\nu} = A^{\nu^\perp} = 
		\begin{pmatrix}
			A_{11}^{\nu^\perp} & A_{12}^{\nu^\perp} \\
			A_{12}^{\nu^\perp} & A_{22}^{\nu^\perp}
		\end{pmatrix}
		= 
		\begin{pmatrix}
			A_{12}^{\nu} & A_{22}^{\nu} \\
			-A_{11}^{\nu} & -A_{12}^{\nu}
		\end{pmatrix}
		= 
		\begin{pmatrix}
			0 & 1 \\
			-1 & 0
		\end{pmatrix}
		\begin{pmatrix}
			A_{11}^{\nu} & A_{12}^{\nu} \\
			A_{12}^{\nu} & A_{22}^{\nu}
		\end{pmatrix}
		= - J_T A^{\nu}
	\end{align*}
	and $\gamma_1 = \gamma_2 = 0$. 
	Since $\nu(t)\vert_{p^*}$ and $\nu^\perp(t)\vert_{p^*}$ form a basis for $N_{p^*}L$ whenever $t \neq 0$, this shows that \smash{$T X_\eta^F \vert_{(X_\eta^F)_{p^*}}$} has coassociative fibers if and only if $A^{J_N\nu} = - J_T A^{\nu}$ holds for all $\nu \in N_{p^*}L$, and $\gamma$ satisfies $\gamma_1(p^*) = \gamma_2(p^*) = 0$.
	
	From \autoref{lemma:nablaf}, we know that $(\nabla_{e_i} f ^1)\vert_{p^*} $ is orthogonal to $E$. Using this fact, we compute
	\begin{align*}
		\nabla^E_{e_i} \eta = \pi_E(\nabla_{e_i} \eta) = \pi_E(\gamma_i f^1 + \gamma \nabla_{e_i} f^1) = \gamma_i f^1, \qquad i=1,2,
	\end{align*}
	at $p^*$, where $\pi_E: E \oplus F \rightarrow E$ denotes the projection onto $E$. This shows that the condition $\gamma_1(p^*) = \gamma_2(p^*) = 0$ is equivalent to $(\nabla^E \eta) \vert_{p^*} = 0$.
	
	Given that $p^*$ was arbitrary, \smash{$X_\eta^F$} is coassociative in $M$, which is equivalent to \smash{$T X_\eta^F \vert_{(X_\eta^F)_{p}}$} having coassociative fibers for all $p \in L$, if and only if $L$ is negative superminimal and $\eta$ is parallel with respect to $\nabla^E$.
\end{proof}

\section{Cayley submanifolds of \texorpdfstring{$\negspinS(S^4)$}{} with the Bryant--Salamon metric}\label{sec:CayleyNew}

We now come to the construction of Cayley submanifolds in the negative spinor bundle $\negspinS(S^4)$ of the $4$-dimensional sphere. 
To begin with, let us briefly review some preliminaries discussed in \cite[\Sec 4.4]{CaliA} for $\R^4$ with the standard metric, which also hold true for $S^4$ with the standard round metric $g$.

Let $e^1, \dots, e^4$ be an orthonormal basis of $T_{x}^*S^4$ at a fixed point $x \in S^4$. Then the Clifford algebra $\Cl(T_{x}^*S^4) \cong \Cl(T_{x}S^4)$ is generated by $e^1, \dots, e^4$ subject to the relations 
\begin{align}\label{eq:Cliffordrel}
	e^i \cdot e^j + e^j \cdot e^i = -2 \delta^{ij}.
\end{align}
We write $\pmspinS$ for the $\pm 1$ eigenspace of the pinor representation $\gamma(\lambda) \in \End(\spinS)$ of the volume element $\lambda= e^1 \cdot e^2 \cdot e^3 \cdot e^4 \in \Cl(T_{x}^*S^4)$. Both $\posspinS$ and $\negspinS$ are isomorphic to the quaternions $\Hbb$, and Clifford multiplication by a covector $\alpha \in T_{x}^*S^4$ interchanges them since $\lambda \cdot \alpha = - \alpha \cdot \lambda$.
On the other hand, octonionic multiplication satisfies $u(uv)= u^2v$ and $u_1(\bar{u}_2v) = -u_2(\bar{u}_1v)$ for all $u, u_1, u_2, v \in \Oct$ with $u_1$ and $u_2$ orthogonal. 
Combining these two identities yields
\begin{align}\label{eq:octonionicrel}
	u_i(u_jv) + u_j(u_iv) = -2\delta_{ij}v
\end{align}
for any orthonormal basis $u_1, \dots, u_4$ of $\Hbb \mathrm{e}$ and any $v \in \Oct$.
Furthermore, multiplication by elements in $\Hbb \mathrm{e}$ interchanges $\Hbb$ and $\Hbb \mathrm{e}$.
Comparing \eqref{eq:Cliffordrel} and \eqref{eq:octonionicrel}, we see that the pinor representation at each point $x \in S^4$ is obtained from octonionic multiplication by identifying $\spinS = \posspinS \oplus \negspinS \cong \Hbb \mathrm{e} \oplus \Hbb \cong \Oct$ and $T_{x}^*S^4 \cong \Hbb \mathrm{e}$. 
For covectors $\alpha \in T_{x}^*S^4$, we write it as 
\begin{align*}
	\gamma: T_{x}^*S^4 \rightarrow \End(\posspinS \oplus \negspinS), \quad \gamma(\alpha)(s) = \alpha s,
\end{align*}
where the product denotes octonionic multiplication. When composing two elements of this representation, it is crucial to remember that $(\gamma(\alpha_1)\gamma(\alpha_2))(s)\allowbreak = \alpha_1(\alpha_2s)$ is in general not equal to $(\alpha_1\alpha_2)s$ because $\Oct$ is not associative. For more details on spin geometry and representations, see, \eg \cite[\Ch 9--11]{Harvey1990}, \cite[\Sec 50]{Wendl}, \cite[\Sec B]{Merkel2024}.

Next, we endow $M^8 = \negspinS(S^4)$ with a parallel $\Spin(7)$-structure following \cite{BryantSalamon1989}. 
Recall that there exists a natural connection $\nabla$, the spin connection, on $M^8$, induced by the Levi-Civita connection on $(S^4,g)$. 
This provides a natural splitting of its tangent space $T_{s} M \cong \mathcal{H}_{s} \oplus \mathcal{V}_{s}$ into the horizontal and vertical spaces for any $s \in M$.
We can identify $\mathcal{H}_{s}$ with the tangent space $T_{\pi(s)}S^4$ of $S^4$ via the linear isomorphism $\Hor_{s} = (\pi_*\vert_{\mathcal{H}_{s}})^{-1}: T_{\pi(s)}S^4 \rightarrow \mathcal{H}_{s}$, where $\pi: M^8 = \negspinS(S^4) \rightarrow S^4$ denotes the projection onto the base.
On the other hand, $\mathcal{V}_{s}$ can be identified with the fiber \smash{$M_{\pi(s)} = (\negspinS)_{\pi(s)}(S^4)$} through the linear isomorphism \smash{$\Ver_{s}: M_{\pi(s)} \rightarrow \mathcal{V}_{s} = T_s(M_{\pi(s)})$}, {$ \sigma \mapsto \frac{d}{dt} (s +t \sigma)\vert_{t=0}$}.
Due to these identifications, the metric $g$ on $S^4$ induces metrics $g_{\mathcal{H}}$ and $g_{\mathcal{V}}$ with natural volume forms $\vol_{\mathcal{H}}$ and $\vol_{\mathcal{V}}$ on $\mathcal{H}$ and $\mathcal{V}$, respectively. 

\begin{satz}[{\cite[\Thm 4.2]{BryantSalamon1989}, \cite[\Subsec 2.2]{Karigiannis2008}}]\label{BSV}
	Consider $S^4$ with the standard round metric and let $r$ denote the radial coordinate in the vertical fibers. Then there exist positive functions $u= u(r)$ and $v=v(r)$ such that 
	\begin{align*}
		g_{M^8} = u^2 g_{\mathcal{H}} \oplus v^2 g_{\mathcal{V}}
	\end{align*}
	defines a complete metric on $M^8 = \negspinS(S^4)$ with holonomy equal to $\Spin(7)$. Its fundamental $4$-form $\Phi$ is given by
	\begin{align*}
		\Phi = u^4\, \vol_{\mathcal{H}} - u^2 v^2( \omega_1 \wedge \sigma^1 + \omega_2\wedge \sigma^2 + \omega_3 \wedge \sigma^3) + v^4\, \vol_{\mathcal{V}},
	\end{align*}
	where $\omega_1, \omega_2, \omega_3$ is an orthogonal basis of norm $\sqrt{2}$ for the self-dual $2$-forms on $\mathcal{H}$ and $\sigma^1,\allowbreak \sigma^2,\allowbreak \sigma^3$ is the corresponding orthogonal basis for the self-dual $2$-forms on $\mathcal{V}$. 
\end{satz}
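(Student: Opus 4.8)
The plan is to exhibit $\Phi$ as an explicit $\Spin(7)$-structure compatible with the warped metric and then reduce the holonomy assertion to the single closed condition $d\Phi = 0$, using the theorem (see \cite{Joyce2007}) that a $\Spin(7)$-structure is torsion-free --- so that $\Hol \subseteq \Spin(7)$ and $\nabla \Phi = 0$ --- precisely when its fundamental $4$-form is closed. First I would verify the pointwise algebraic claim: under the splitting $T_sM \cong \mathcal{H}_s \oplus \mathcal{V}_s$ and the metric $u^2 g_{\mathcal{H}} \oplus v^2 g_{\mathcal{V}}$, the form $\Phi$ lies in the $\GL(8,\R)$-orbit of the standard Cayley $4$-form and induces exactly this metric together with the stated orientation. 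This uses the octonionic model $\spinS \cong \Oct$ with $T_x^*S^4 \cong \Hbb\mathrm{e}$ recalled above: the three summands $u^4\,\vol_{\mathcal{H}}$, $-u^2v^2\sum_i \omega_i \wedge \sigma^i$ and $v^4\,\vol_{\mathcal{V}}$ are the horizontal, mixed and vertical components of the Cayley form under this identification, with the pairing $\omega_i \leftrightarrow \sigma^i$ between self-dual $2$-forms on base and fiber induced by Clifford multiplication.

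The analytic heart is the computation of $d\Phi$. For this I would set up the structure equations for a local horizontal coframe and for the fiber coordinates, expressing $d\,\vol_{\mathcal{H}}$, $d\,\vol_{\mathcal{V}}$, $d\omega_i$ and $d\sigma^i$ through the connection $1$-forms of the spin connection and the curvature of $(S^4,g)$. The covariant derivatives of the self-dual $2$-forms $\omega_i$ and $\sigma^i$ feed the base curvature into $d\Phi$, and it is exactly the self-dual Einstein property of the round $S^4$ that makes the resulting terms combine consistently. Since $u$ and $v$ depend only on the radial coordinate $r$ in the fibers, collecting terms by type reduces $d\Phi = 0$ to a coupled system of first-order ODEs in $u(r)$ and $v(r)$; solving it as in \cite{BryantSalamon1989} yields explicit positive $u$ and $v$.

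To finish I would check completeness and that the holonomy is all of $\Spin(7)$. Completeness follows from the explicit solutions: smoothness of the metric across the zero section $r=0$ (the vertical warping must let the fibers close up smoothly) together with divergence of the radial arc length as $r \to \infty$. That the holonomy does not reduce to a proper subgroup such as $\SU(4)$ or $\mathrm{Sp}(2)$ can be seen by showing the metric carries a unique (up to scale) parallel spinor --- equivalently, no parallel $1$-, $2$- or $3$-forms --- which excludes the smaller candidate holonomy groups. I expect the main obstacle to be the curvature bookkeeping in $d\Phi = 0$: correctly differentiating the bundle-valued forms $\omega_i, \sigma^i$ requires tracking how the spin connection couples the horizontal and vertical pieces through the base curvature, and it is precisely the self-dual Einstein normalization of $S^4$ that collapses the many curvature terms into the clean ODE system. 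Verifying that the holonomy does not reduce further is a genuine but more structural, less computational, subtlety.
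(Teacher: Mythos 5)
The paper does not prove this statement: it is imported verbatim from \cite[Thm.~4.2]{BryantSalamon1989} (see also \cite[Subsec.~2.2]{Karigiannis2008}) and used as a black box, so there is no internal proof to compare against. Your outline is, however, a faithful sketch of the argument in the cited source: the reduction of torsion-freeness to the single condition $d\Phi=0$ (Fern\'andez's theorem, valid for $\Spin(7)$ though not for $\Gtwo$), the structure-equation computation feeding the curvature of the self-dual Einstein base into a first-order ODE system for $u(r)$ and $v(r)$, and the separate checks of completeness and of non-reduction of the holonomy are exactly the steps Bryant--Salamon carry out. Two places where your sketch is thinner than what a complete write-up needs: first, the pointwise claim that $u^4\vol_{\mathcal H}-u^2v^2\sum_i\omega_i\wedge\sigma^i+v^4\vol_{\mathcal V}$ is $\GL(8,\R)$-equivalent to the standard Cayley form depends delicately on the normalization $\lvert\omega_i\rvert=\lvert\sigma^i\rvert=\sqrt2$ and on the choice of \emph{negative} versus positive spinor bundle --- the paper's remark following \eqref{eq:PhiCayley} shows that these sign conventions change $\Phi$ nontrivially, so this verification is not automatic; second, ``no parallel $1$-, $2$- or $3$-forms'' excludes the reductions to $\SU(4)$, $\mathrm{Sp}(2)$ and reducible holonomy only after invoking the de Rham splitting theorem and Berger's classification, which should be made explicit. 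Neither point is a flaw in the strategy, only in the level of detail.
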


\begin{rem}
	The factor $\sqrt{2}$ in the preceding theorem appears due to our convention for the inner product on the exterior algebra $\Lambda^kV$ over some vector space $V$, namely $\langle v_1 \wedge \cdots \wedge v_k , w_1 \wedge \cdots \wedge w_k \rangle = \det(\langle v_i, w_j \rangle)$ for $v_i, w_j \in V$.
\end{rem}

Let us now restrict the vector bundle $M^8 = \negspinS(S^4)\rightarrow S^4$ to an oriented immersed submanifold $L^2 \subset S^4$ and fix some oriented local orthonormal adapted frame $(e_1, e_2, \nu_3, \nu_4)$ along $L$ with dual coframe \smash{$(e^1, e^2, \nu^3, \nu^4)$}.
We denote its horizontal lift to $\mathcal{H}$ by $\bar{e}_i = \Hor\,e_i$, $\bar{\nu}_j = \Hor\,\nu_j$, and their dual horizontal $1$-forms by $\bar{e}^i, \bar{\nu}^j,\, i=1,2, j=3, 4$. 

Since $e^1, e^2, \nu^3, \nu^4$ are orthonormal, they satisfy $e^1 \cdot e^2 = \frac{1}{2} e^1 \wedge e^2$ and $\nu^3 \cdot \nu^4 = \frac{1}{2} \nu^3 \wedge \nu^4$ \cite[(B.1)]{Merkel2024}, which implies that the terms $\gamma(e^1)\gamma(e^2) = \gamma(e^1 \cdot e^2)$ and $\gamma(\nu^3)\gamma(\nu^4) =$ $\gamma(\nu^3 \cdot \nu^4)$ are independent of the choice of frame and hence globally defined. 
Let us now focus on a fixed point $\pointL \in L$ and consider the restricted operators $\gamma(e^1)\gamma(e^2)$ and \smash{$\gamma(\nu^3)\gamma(\nu^4): \negspinS \rightarrow \negspinS$}.
The identity $(u\mathrm{e})((v\mathrm{e})(wy)) = w((u\mathrm{e})((v\mathrm{e})y))$ for $u,v,w,y \in \Hbb$ shows that both $\gamma(e^1)\gamma(e^2)$ and $\gamma(\nu^3)\gamma(\nu^4)$ are complex linear with respect to the natural complex structure $j_L = e^1e^2 \in \{u \in \im\, \Hbb \mid |u| = 1 \}$ on $\negspinS \cong \Hbb$. 
(A quick computation yields that $j_L$ is independent of the choice of frame.)
As $\gamma(e^1 \cdot e^2)^2 = \gamma(\nu^3 \cdot \nu^4)^2 = -1$, the two operators share the eigenvalues $j_L$ and $-j_L$. Combining this with the fact that they are simultaneously diagonalizable because they commute, $\gamma(e^1)\gamma(e^2)$ and $\gamma(\nu^3)\gamma(\nu^4)$ differ at most by a sign. 
Due to the relation $\gamma(e^1 \cdot e^2) \gamma(\nu^3 \cdot \nu^4) = \gamma(\lambda) = \pm1$ on $\pmspinS$, they must be equal on $\negspinS$.
Consequently, there exists a global canonical complex structure $\Gamma$ on the restricted bundle $\negspinS(S^4)\vert_L $, locally given by $\Gamma = \gamma(e^1)\gamma(e^2) = \gamma(\nu^3)\gamma(\nu^4): \negspinS \rightarrow \negspinS$. 
This operator provides a splitting of $\negspinS(S^4)\vert_L = V_+ \oplus V_-$ into the two eigenbundles $ V_\pm$ of rank $2$ corresponding to its eigenvalues $\pm j_L$.
The total spaces of these bundles are $4$-dimensional submanifolds of $\negspinS(S^4)$. 

Using similar reasoning as in the derivation of the equality $\gamma(e^1)\gamma(e^2) = \gamma(\nu^3)\gamma( \nu^4)$, we obtain
\begin{align}\label{eq:gammanegSpin}
	\gamma(e^1 \wedge e^2) = \gamma(\nu^3 \wedge \nu^4), \quad
	\gamma(e^1 \wedge \nu^3) = - \gamma(e^2 \wedge \nu^4), \quad 
	\gamma(e^1 \wedge \nu^4) = \gamma(e^2 \wedge \nu^3). 
\end{align}
Consider the standard basis 
\begin{align*}
	f^1 = e^1 \wedge e^2 + \nu^3 \wedge \nu^4, \quad f^2 = e^1 \wedge \nu^3 + \nu^4 \wedge e^2, \quad f^3 = e^1 \wedge \nu^4 + e^2 \wedge \nu^3
\end{align*}
of self-dual $2$-forms on $S^4$. 
By \eqref{eq:gammanegSpin}, it satisfies
\begin{align}
	\gamma(f^1) &= \gamma(e^1 \wedge e^2) + \gamma(\nu^3 \wedge \nu^4) = 2\gamma(e^1 \wedge e^2) = 2 \gamma(\nu^3 \wedge \nu^4)= 4 \Gamma,\nonumber \\*
	\gamma(f^2) &= \gamma(e^1 \wedge \nu^3) + \gamma(\nu^4 \wedge e^2) = 2 \gamma(e^1 \wedge \nu^3) = - 2 \gamma(e^2 \wedge \nu^4),\label{eq:gammafk} \\*
	\gamma(f^3) &= \gamma(e^1 \wedge \nu^4) + \gamma(e^2 \wedge \nu^3) = 2\gamma(e^1 \wedge \nu^4) = 2 \gamma(e^2 \wedge \nu^3). \nonumber 
\end{align}
Using this, we compute that $\gamma(f^i)\gamma(f^j) = - \gamma(f^j)\gamma(f^i)$ for $i \neq j$, $\gamma(f^i)^2 = -16$ and 
\begin{align}\label{eq:gammafikombi}
	\gamma(f^1) \gamma(f^2) &= 4 \gamma(e^1\wedge e^2)\gamma(e^1\wedge \nu^3) = 8 \gamma(e^2\wedge \nu^3) = 4 \gamma(f^3),\nonumber \\
	\gamma(f^1) \gamma(f^3) &= 4 \gamma(e^1\wedge e^2)\gamma(e^1\wedge\nu^4) = 8 \gamma(e^2 \wedge\nu^4) = -4 \gamma(f^2), \\
	\gamma(f^2) \gamma(f^3) &= 4 \gamma(e^1\wedge\nu^3)\gamma(e^1\wedge\nu^4) = 8 \gamma(\nu^3 \wedge \nu^4) = 4 \gamma(f^1). \nonumber
\end{align}

Now fix a local unit spinor $s_1$ in $V_+$. 
Then {$\{s_1, s_2 = \frac{1}{4}\gamma(f^1) s_1 = \Gamma s_1 = j_L s_1\}$} and {$\{s_3 = \frac{1}{4}\gamma(f^2) s_1, s_4 = \frac{1}{4}\gamma(f^3) s_1 = \Gamma s_3 = - j_L s_3 \}$} form local orthonormal frames for $V_+$ and $V_-$, respectively. 
The latter follows from the fact that $\Gamma$ anti-commutes with both $\gamma(f^2)$ and $\gamma(f^3)$, and that $\Gamma \gamma(f^2) = \gamma(f^3)$. We denote the vertical lifts of these spinors to $\mathcal{V}$ by $\check{s}_k$, with dual vertical $1$-forms $\check{s}^k,\, k = 1, \dots, 4$. 
As in the $\Gtwo$ case, we summarize the described setup using a diagram: 
\begin{equation*}
	\begin{tikzcd}[column sep=-0.5em, row sep=1em]
		& \spn \{\bar{e}_1, \bar{e}_2, \bar{\nu}_3, \bar{\nu}_4\}\vert_{s} \arrow[d, equal, shorten=1mm] && \spn \{\check{s}_1, \check{s}_2, \check{s}_3, \check{s}_4\}\vert_{s} \arrow[d,equal, shorten=1mm]\\
		T_{s} M = T_{s}\bigl(\negspinS(S^4)\bigr) \cong &\mathcal{H}_{s} &\oplus& \mathcal{V}_{s} \\
		&&&\\
		&T_{\pi(s)}S^4 \arrow[uu, "\Hor_{s}", "\cong"'] && M_{\pi(s)} \arrow[uu, "\Ver_{s}", "\cong"'] = (\negspinS)_{\pi(s)}(S^4) \\
		&\spn \{e_1, e_2, \nu_3, \nu_4\}\vert_{\pi(s)} \arrow[u, equal, shorten=0.5mm] && \spn \{s_1, s_2, s_3, s_4\}\vert_{\pi(s)} \arrow[u,equal, shorten=0.5mm]
	\end{tikzcd}
\end{equation*}
Consequently, the fundamental $4$-form $\Phi$ in \autoref{BSV} restricted to $L$ is locally given by
\begin{align}\label{eq:PhiCayley}
	\Phi &= u^4 \bar{e}^1\bar{e}^2\bar{\nu}^3\bar{\nu}^4 - u^2 v^2(\omega_1\sigma^1 + \omega_2 \sigma^2 + \omega_3\sigma^3) + v^4\check{s}^1\check{s}^2\check{s}^3\check{s}^4 \nonumber\\*
	&= u^4 \bar{e}^1\bar{e}^2\bar{\nu}^3\bar{\nu}^4 
	- u^2v^2(\bar{e}^1\bar{e}^2 + \bar{\nu}^3\bar{\nu}^4)(\check{s}^1\check{s}^2 + \check{s}^3\check{s}^4) 
	- u^2v^2(\bar{e}^1\bar{\nu}^3 + \bar{\nu}^4\bar{e}^2)(\check{s}^1\check{s}^3 + \check{s}^4\check{s}^2)\nonumber\\*
	&\quad - u^2v^2(\bar{e}^1\bar{\nu}^4 + \bar{e}^2\bar{\nu}^3)(\check{s}^1\check{s}^4 + \check{s}^2\check{s}^3) 
	+ v^4 \check{s}^1\check{s}^2\check{s}^3\check{s}^4,
\end{align}
where we omitted the wedge product symbols for clarity.

\begin{rem}
	Contrary to \cite{CaliA, CaliA2} and this paper, the authors of \cite{CaliB} used the sign convention $\lambda = -e^1\cdot e^2\cdot \nu^3 \cdot \nu^4$ for the volume element. 
	Due to this, they actually worked in the positive spinor bundle. 
	The general idea remains the same but some adaptions are necessary to work out the statements and proof for the negative spinor bundle. In particular, the fundamental $4$-form $\Phi_+$ on $\posspinS(S^4)$ differs by some signs from our formula for $\Phi$ on $\negspinS(S^4)$ (see \eqref{eq:PhiCayley}):
	\begin{align*}
		\Phi_+
		&= u^4 \bar{e}^1\bar{e}^2\bar{\nu}^3\bar{\nu}^4 
		+ u^2v^2(\bar{e}^1\bar{e}^2 - \bar{\nu}^3\bar{\nu}^4)(\check{s}^1\check{s}^2 - \check{s}^3\check{s}^4) 
		+ u^2v^2(\bar{e}^1\bar{\nu}^3 - \bar{\nu}^4\bar{e}^2)(\check{s}^1\check{s}^3 - \check{s}^4\check{s}^2)\nonumber\\
		&\quad + u^2v^2(\bar{e}^1\bar{\nu}^4 - \bar{e}^2\bar{\nu}^3)(\check{s}^1\check{s}^4 - \check{s}^2\check{s}^3) 
		+ v^4 \check{s}^1\check{s}^2\check{s}^3\check{s}^4
	\end{align*}
	\cite[\Subsec 2.2]{Karigiannis2008}, \cite[\Thm 4.5]{CaliB}.
	For the purpose of being coherent and sticking to one convention, we keep considering the negative spinor bundle, for which their result can be proved analogously to \cite[\Thm 4.8]{CaliB}.
\end{rem}

We examine the space 
\begin{align*}
	X_{\psi} = \bigl\{(\pointL, \xi + \psi({\pointL})) \in \negspinS(S^4)\vert_L \, \big| \, \pointL \in L,\, \xi \in (V_+)_{\pointL} \bigr\} \qquad(\text{\enquote{$V_+ + \psi$}})
\end{align*}
for a section $\psi \in \Gamma(V_-)$. 
This is a \enquote{twisting} of the bundle $V_+$ over $L$ obtained by affinely translating each fiber $(V_+)_{\pointL}$ by a vector $\psi({\pointL}) \in (V_-)_{\pointL}$ in the orthogonal complement. 
Our goal is to determine necessary and sufficient conditions on $L$ and $\psi$ for $V_+ + \psi$ to be Cayley in $\negspinS(S^4)$.
We start with a lemma and the construction of a holomorphic structure on $V_\pm$. 

\begin{lemma}\label{lemma:nablaspinor}
	Let $\nabla$ denote the connection on $\negspinS(S^4)\vert_L$ induced by the Levi-Civita connection on $S^4$. Then $\nabla_{e_i} \Gamma$ interchanges $V_+$ and $V_-$, and every local section $s$ of $V_\pm$ satisfies
	\begin{align*}
		\nabla_{e_i} s= \mp \frac{1}{2}j_L (\nabla_{e_i} \Gamma)s
	\end{align*}
	for $i=1,2$. 
\end{lemma}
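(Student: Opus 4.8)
The plan is to obtain both assertions from the single structural fact that $\Gamma$ is a complex structure on $\negspinS(S^4)\vert_L$ ($\Gamma^2 = -\mathrm{id}$) that is complex linear with respect to $j_L$, differentiated against the eigen-relation $\Gamma s = \pm j_L s$ that defines $V_\pm$. Throughout I would work at the fixed point $p^*$ in a normal frame \eqref{eq:defnormalcoords}, so that, just as in \autoref{lemma:nablaf}, the connection acts purely ``off-diagonally'' and the induced connections on $V_\pm$ drop out.

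For the interchange property I would first apply the induced connection on $\End(\negspinS(S^4)\vert_L)$ to $\Gamma^2 = -\mathrm{id}$, giving $(\nabla_{e_i}\Gamma)\Gamma + \Gamma(\nabla_{e_i}\Gamma) = 0$, so that $\nabla_{e_i}\Gamma$ anticommutes with $\Gamma$. To refine this from the complexified $\pm i$-eigenspaces to the real eigenbundles $V_\pm$, I would compute $\nabla_{e_i}\Gamma$ explicitly from $\Gamma = \gamma(e^1)\gamma(e^2)$: using the compatibility of the spin connection with Clifford multiplication, $\nabla_{e_i}\gamma(\alpha) = \gamma(\nabla_{e_i}\alpha)$, together with the normal-frame identities \eqref{eq:normalcoord} (so that $\nabla_{e_i}e^1,\nabla_{e_i}e^2$ are purely normal at $p^*$), one gets $\nabla_{e_i}\Gamma = \gamma(\nabla_{e_i}e^1)\gamma(e^2) + \gamma(e^1)\gamma(\nabla_{e_i}e^2)$ as an explicit combination of the $A^k_{ij}$. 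From this expression one reads off that $\nabla_{e_i}\Gamma$ maps $V_\pm$ into $V_\mp$, which is the claimed interchange.

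For the formula I would differentiate the defining relation of $V_+$, namely $\Gamma s = j_L s$, to get $(\nabla_{e_i}\Gamma)s + \Gamma\,\nabla_{e_i}s = \nabla_{e_i}(j_L s)$. Decomposing $\nabla_{e_i}s$ into its $V_+$- and $V_-$-parts and using $\Gamma\vert_{V_\pm} = \pm j_L$ together with the interchange property, the $V_-$-component of $\nabla_{e_i}s$ is pinned down in terms of $(\nabla_{e_i}\Gamma)s$. Since in the normal frame the $V_+$-component of $\nabla_{e_i}s$ vanishes at $p^*$ (the analogue of $\nabla^F_{e_i}f^k = 0$ in \autoref{lemma:nablaf}), the whole derivative reduces to its off-diagonal part; solving the resulting linear relation and using $j_L^2 = -\mathrm{id}$ (so $j_L^{-1} = -j_L$) yields $\nabla_{e_i}s = -\tfrac12 j_L(\nabla_{e_i}\Gamma)s$. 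The section $s \in \Gamma(V_-)$ is handled identically, the only change being the sign from $\Gamma\vert_{V_-} = -j_L$, which produces the $\mp$ in the statement.

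The main obstacle is the bookkeeping forced by the non-parallel complex structure $j_L$ and the non-associativity of octonionic multiplication: the operator $\Gamma = \gamma(e^1)\gamma(e^2)$ and multiplication by $j_L = e^1 e^2$ coincide only up to a sign on the two eigenbundles, and $\nabla_{e_i}j_L$ does not vanish, so the differentiation of $\Gamma s = j_L s$ must be carried out carefully to confirm that the $j_L$-derivative terms cancel against the off-diagonal part of $\nabla_{e_i}\Gamma$. I expect the safest way to control these cancellations is to compute $\nabla_{e_i}s_1,\dots,\nabla_{e_i}s_4$ for the orthonormal spinor frame directly via the second fundamental form, mirroring the proof of \autoref{lemma:nablaf}, after which both the interchange property and the formula can be read off by inspection.
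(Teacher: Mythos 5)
The paper does not actually prove this lemma --- its ``proof'' is the single line ``Cf.~\cite[\Subsec 4.2]{CaliB}'' --- so there is no internal argument to compare against. Your strategy is the natural one and, as far as the cited reference and the way the lemma is later consumed in the proof of \autoref{thm:cayleyNEU} indicate, essentially the intended one: get the interchange property from $\Gamma^2=-\mathrm{id}$ refined by the explicit expansion $\nabla_{e_i}\Gamma=\gamma(\nabla_{e_i}e^1)\gamma(e^2)+\gamma(e^1)\gamma(\nabla_{e_i}e^2)$ (which, by \eqref{eq:normalcoord} and \eqref{eq:gammanegSpin}, lands in $\spn\{\gamma(f^2),\gamma(f^3)\}$ and hence visibly swaps $V_+$ and $V_-$), and then pin down the $V_\mp$-component of $\nabla_{e_i}s$ from the differentiated eigenvalue relation. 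The sign bookkeeping you describe is also consistent: with $(\nabla_{e_i}\Gamma)s\in V_\mp$ and $j_L$ preserving the eigenbundles, the relation $2j_L\,\pi_{V_\mp}(\nabla_{e_i}s)=(\nabla_{e_i}\Gamma)s$ inverts to the stated $\mp\tfrac12 j_L(\nabla_{e_i}\Gamma)s$, and a direct check against the coefficients $B_i,C_i$ used later in the paper confirms the constant $\tfrac12$.

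The genuine gap is the step you assert as ``the analogue of $\nabla^F_{e_i}f^k=0$'', namely that $\pi_{V_\pm}(\nabla_{e_i}s)$ vanishes at $p^*$. For the $2$-forms $f^k$ this follows from the Leibniz rule alone because the $f^k$ are built canonically out of the adapted coframe; the spinors are not. The unit spinor $s_1$ is an auxiliary choice --- replacing $s_1$ by $\cos(\theta)\,s_1+\sin(\theta)\,j_Ls_1$ for a function $\theta$ gives another local unit section of $V_+$ whose derivative acquires a nonzero $V_+$-component --- so ``mirroring the proof of \autoref{lemma:nablaf} via the second fundamental form'' cannot by itself produce $\nabla_{e_i}s_1\perp V_+$. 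What is needed is the local expression of the spin connection in the trivialization induced by a lift of the adapted frame, $\nabla_{e_i}s = e_i(s) + \tfrac12\sum_{j<k}\omega_{jk}(e_i)\,\gamma(e^j)\gamma(e^k)s$: in a normal frame \eqref{eq:defnormalcoords} the coefficients $\omega_{12}(e_i)$ and $\omega_{34}(e_i)$ vanish at $p^*$, and the surviving mixed terms $\gamma(e^j)\gamma(\nu^k)$ interchange $V_+$ and $V_-$. This single formula supplies the missing off-diagonality, reproves the interchange property, and --- after comparing coefficients via \eqref{eq:gammafikombi} --- also resolves your (legitimate) concern about $(\nabla_{e_i}j_L)s$, since any such correction is itself off-diagonal and the constants come out as claimed. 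Finally, note that the identity cannot hold for literally \emph{every} local section (multiplying $s$ by a function breaks it); what your argument should target, and what the proof of \autoref{thm:cayleyNEU} actually uses, is the statement for the frame spinors $s_1,\dots,s_4$ at the point $p^*$.
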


\begin{proof}
	Cf.~\cite[\Subsec 4.2]{CaliB}. 
\end{proof}

As in the previous subsection, $L$ can be viewed as a complex 1-dimensional Kähler manifold. 
Additionally, we equip $V_\pm$ with the complex structure $J_\pm = - \Gamma$. 
That is, $J_+s_1 = - s_2, J_+s_2 = s_1$ on $V_+$ and $J_-s_3 = - s_4, J_-s_4 = s_3$ on $V_-$. 
This turns both $V_+$ and $V_-$ into complex line bundles over $L$. 
(Without the minus sign, the following theorem would require $\psi$ to be anti-holomorphic rather than holomorphic.)

The Levi-Civita connection on $S^4$ induces connections $\nabla$ on $\negspinS(S^4)\vert_L$ and $\nabla^{V_\pm} = \pi_{V_\pm} \circ \nabla$ on $V_\pm$, where $\pi_{V_\pm}: V_+ \oplus V_- \rightarrow V_\pm$ denotes the projection onto $V_\pm$. 
Let $s$ be a local section of $V_\pm$ and $i \in \{1,2\}$. According to \autoref{lemma:nablaspinor}, $\nabla_{e_i} s$ is orthogonal to $V_\pm$, which implies that $\nabla_{e_i}^{V_\pm} s = \pi_{V_\pm}(\nabla_{e_i} s) = 0$. 
From this, we obtain $(\nabla^{V_\pm}_{e_i} J_\pm)(s) = \nabla^{V_\pm}_{e_i} (J_\pm s) - J_\pm(\nabla^{V_\pm}_{e_i} s) = 0$, showing that $J_\pm$ is parallel with respect to $\nabla^{V_\pm}$. 
Thus, the connection $\nabla^{V_\pm}$ is complex linear with respect to $J_\pm$, 
and, by setting \vphantom{$\int^B$}\smash{$\nabla^{V_\pm}_{X + i Y} s \stackrel{\text{def}}{=} \nabla^{V_\pm}_X s + J_{\pm}(\nabla^{V_\pm}_Y s)$}\vphantom{$\int^B$} for $X,Y \in T_{p}L$ and $p \in L$, it becomes a complex connection.
Following the same reasoning as in the previous subsection, we conclude that $V_\pm$ is a holomorphic line bundle with holomorphic structure given by $\bar{\partial}_{V_\pm} = (\nabla^{V_\pm})^{0,1}$.

\begin{satz}\label{thm:cayleyNEU}
	The submanifold $V_+ + \psi$ is Cayley in $\negspinS(S^4)$ if and only if $L^2$ is minimal in $S^4$ and $\psi \in \Gamma(V_-)$ is holomorphic. 
\end{satz}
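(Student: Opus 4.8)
The plan is to follow the pattern of the two proofs of \autoref{thm:assNEU}, replacing the (co\nobreakdash-)associative conditions by the Cayley characterization \ref{item:cayleycond}. I would fix a point $(p^*, \xi^* + \psi(p^*)) \in V_+ + \psi$, choose a local orthonormal adapted frame $(e_1, e_2, \nu_3, \nu_4)$ along $L$ that is normal at $p^*$, and pass to the induced spinor frames $s_1, s_2$ of $V_+$ and $s_3, s_4$ of $V_-$ constructed above. With coordinates $u = (u_1, u_2)$ on $L$ and fiber coordinates $t = (t_1, t_2)$ with respect to $\{s_1, s_2\}$, the immersion of $V_+ + \psi$ into $M^8$ is $\Phi(u, t) = \bigl(x(u),\, t_1 s_1(u) + t_2 s_2(u) + \psi(u)\bigr)$, where $\psi = a\, s_3 + b\, s_4$ for functions $a, b \in C^\infty(L)$, and the fixed point has coordinates $(0, t^*)$.

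First I would record explicit expressions for $\nabla_{e_i} s_k$ ($k = 1, \dots, 4$) in terms of the second fundamental form, which follow from \autoref{lemma:nablaspinor} in the same way that \autoref{lemma:nablaf} is obtained in the $\Gtwo$ setting; the essential qualitative input is that $\nabla_{e_i}$ sends sections of $V_\pm$ into $V_\mp$. With these in hand, the tangent space $T_{\omega^*}(V_+ + \psi)$ at $\omega^* = \Phi(0, t^*)$ is spanned by $F_1 = \check{s}_1$, $F_2 = \check{s}_2$ and $E_i = \bar{e}_i + A_i \check{s}_1 + B_i \check{s}_2 + C_i \check{s}_3 + D_i \check{s}_4$ for $i = 1, 2$, where $A_i, B_i$ are linear in the section coordinates $(a, b)$ (they collect the $V_+$-components of $a\,\nabla_{e_i} s_3 + b\,\nabla_{e_i} s_4$), while $C_i, D_i$ gather the $V_-$-components $t_1^* \nabla_{e_i} s_1 + t_2^* \nabla_{e_i} s_2$, which are linear in the fiber coordinates $t^*$, together with the derivatives $\partial a/\partial u_i$ and $\partial b/\partial u_i$. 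This reproduces the bookkeeping in the proof of \autoref{thm:assNEU}.

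The core of the argument is then to impose \ref{item:cayleycond}, i.e.\ to evaluate the bundle-valued form $\eta(E_1, E_2, F_1, F_2)$ built from $\Phi$ in \eqref{eq:PhiCayley} and to require that it vanish. Concretely I would compute the three-fold cross products occurring in $\eta$ through $X(\cdot, \cdot, \cdot)^\flat = (\cdot) \lrcorner (\cdot) \lrcorner (\cdot) \lrcorner \Phi$ and then assemble the eight wedge- and interior-product terms. I expect this to be the main obstacle: in contrast to the associative case, where the single contraction $E_2 \lrcorner E_1 \lrcorner F_1 \lrcorner \psi$ sufficed, here the full $\eta$ must be handled, and the block structure of $\Phi$ together with the four vertical frame directions makes the contraction bookkeeping lengthy. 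I anticipate that, because $F_1$ and $F_2$ are purely vertical and $\Phi$ pairs horizontal and vertical self-dual $2$-forms in \eqref{eq:PhiCayley}, many terms cancel and the vanishing of $\eta$ collapses to a small set of scalar equations; as in the proof of statement \ref{item:assNEU2}, some of the higher-order equations should turn out to be consequences of the leading linear ones.

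Finally I would sort the resulting equations by their dependence on the parameters. The terms linear in the fiber coordinates $t^*$ must vanish for all $t^* \in \R^2$, and I expect these to force $\Tr A^3 = \Tr A^4 = 0$, that is $\Tr A = 0$, which is minimality of $L$ in $S^4$ (see \autoref{rem:minimal}). The remaining equations, involving $a, b$ and their first derivatives, should encode Cauchy--Riemann-type relations; to identify these with holomorphicity I would compute $\nabla^{V_-}_{e_1 + i e_2} \psi = \nabla^{V_-}_{e_1} \psi + J_-\bigl(\nabla^{V_-}_{e_2} \psi\bigr)$ directly, using that $\pi_{V_-}(\nabla_{e_i} \psi) = (\partial a/\partial u_i)\, s_3 + (\partial b/\partial u_i)\, s_4$ (since $\nabla_{e_i} s_3, \nabla_{e_i} s_4 \in V_+$) and $J_- s_3 = -s_4$, $J_- s_4 = s_3$, which gives $\nabla^{V_-}_{e_1 + i e_2}\psi = (a_1 + b_2)\, s_3 + (b_1 - a_2)\, s_4$ with $a_i = \partial a/\partial u_i$, $b_i = \partial b/\partial u_i$, so that the section equations are exactly $\bar{\partial}_{V_-} \psi = 0$. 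Since $p^*$ was arbitrary, combining the two conditions yields the claimed equivalence.
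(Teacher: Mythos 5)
Your proposal is correct and follows essentially the same route as the paper: a normal frame at $p^*$, \autoref{lemma:nablaspinor} to express $\nabla_{e_i}s_k$ via the second fundamental form, the basis $E_i, F_j$ of the tangent space, the characterization \ref{item:cayleycond} via vanishing of $\eta(E_1,E_2,F_1,F_2)$, and the final split of the resulting linear equations into $\Tr A = 0$ (minimality) and the Cauchy--Riemann relations $a_1+b_2 = a_2-b_1 = 0$ identified with $\bar{\partial}_{V_-}\psi = 0$. The only substantive content you defer is the explicit evaluation of $\eta$, which the paper carries out term by term and which indeed collapses to exactly the two families of scalar equations you anticipate.
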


\begin{rem}
	Note that the conditions on $L$ and $\psi$ are the same as in the case of $\R^4$ in \cite{CaliA2}.
\end{rem}

\begin{proof}
	We fix a point $(\pointL^*, \xi^* + \psi({\pointL^*})) \in V_+ + \psi$, let $(e_1, e_2,  \nu_3, \nu_4)$ be a local orthonormal adapted frame along $L$ which is normal at $p^*$, and suppose $f^1, f^2, f^3$ are the corresponding local self-dual 2-forms and $s_1, \dots, s_4$ are the corresponding local spinors defined above. 
	Moreover, we let $u = (u_1, u_2)$ be the coordinates for $L$ described in \autoref{sec:2ndfundform}, and $t=(t_1,t_2)$ be the coordinates on the fiber with respect to the local trivialization \smash{$V_+ \stackrel{\text{loc}}{=} \spn\{s_1, s_2\}$}\vphantom{$\int^B$}. Then our fixed point has the coordinates $(0, t^*)$ for some $t^*=(t_1^*,t_2^*)$.
	
	The immersion of $V_+ + \psi$ into $M^8 = \negspinS(S^4)$, $\Psi: X_\psi \rightarrow M$, is locally given by 
	\begin{align*}
		\Psi: (u, t) \mapsto\ & \bigl(x(u), t_1s_1(u) + t_2s_2(u)+ \psi(u)\bigr)\\*
		& = \bigl(x(u), t_1s_1(u) + t_2s_2(u)+ a(u)s_3(u) + b(u)s_4(u)\bigr), 
	\end{align*}
	where $x$ is the immersion of $L$ into $X^4$, and $a$ and $b$ are the coordinates of $\psi$ with respect to the local trivialization \smash{$V_- \stackrel{\text{loc}}{=} \spn\{s_3, s_4\}$}. 
	Then the tangent space to $X_\psi$ at $s^* = \Psi(0, t^*) \in M$ is spanned by $E_i = \Psi_*(\partial_{u_i})$ and $F_j = \Psi_*(\partial_{t_j})$ for $i,j=1,2$, omitting the dependence on $(0,t^*)$.
	
	By \autoref{lemma:nablaspinor}, we have $\nabla_{e_i} s_k= - \frac{1}{2}j_L (\nabla_{e_i} \Gamma)s_k$ and $\nabla_{e_i} s_l= \frac{1}{2}j_L (\nabla_{e_i} \Gamma)s_l$ for $k=1,2$ and $l=3,4$. 
	As we are working with a normal frame, we can use \eqref{eq:normalcoord} and \eqref{eq:gammafk} to compute
	\begin{align*}
		\nabla_{e_i} \Gamma 
		&= \gamma(\nabla_{e_i} e^1)\gamma(e^2) + \gamma(e^1)\gamma(\nabla_{e_i}e^2) \\*
		&= \gamma(-A_{i1}^3\nu^3 - A_{i1}^4 \nu^4)\gamma(e^2) + \gamma(e^1)\gamma(-A_{i2}^3\nu^3 - A_{i2}^4 \nu^4) \\
		&= \frac{1}{2} \bigl(-A_{i1}^3\gamma(\nu^3 \wedge e^2) -A_{i1}^4\gamma(\nu^4 \wedge e^2) -A_{i2}^3\gamma(e^1\wedge\nu^3) -A_{i2}^4\gamma(e^1\wedge\nu^4) \bigr)\\
		&= \frac{1}{4}\bigl((-A_{i2}^3 - A_{i1}^4)\gamma(f^2) + (A_{i1}^3 - A_{i2}^4)\gamma(f^3)\bigr)\\
		&= B_i\frac{1}{4}\gamma(f^2) + C_i\frac{1}{4}\gamma(f^3),
	\end{align*}
	where we set $B_i = -A_{i2}^3 - A_{i1}^4$ and $C_i = A_{i1}^3 - A_{i2}^4$.
	Substituting the expressions {$s_2 = \frac{1}{4}\gamma(f^1)s_1$}, $ s_3 = \frac{1}{4}\gamma(f^2)s_1$ and $s_4 = \frac{1}{4}\gamma(f^3)s_1$ into $(\nabla_{e_i} \Gamma)s_k$ and then applying \eqref{eq:gammafikombi} yields
	\begin{align*}
		(\nabla_{e_i} \Gamma)s_1 &= B_i\frac{1}{4}\gamma(f^2)s_1 + C_i\frac{1}{4}\gamma(f^3)s_1
		= B_is_3 + C_i s_4, \\
		(\nabla_{e_i} \Gamma)s_2 &= B_i\frac{1}{16}\gamma(f^2)\gamma(f^1)s_1 + C_i\frac{1}{16}\gamma(f^3)\gamma(f^1)s_1
		= - B_i\frac{1}{4}\gamma(f^3)s_1 + C_i\frac{1}{4}\gamma(f^2)s_1\\*
		&= C_i s_3 -B_is_4, \\
		(\nabla_{e_i} \Gamma)s_3 &= B_i\frac{1}{16}\gamma(f^2)\gamma(f^2)s_1 + C_i\frac{1}{16}\gamma(f^3)\gamma(f^2)s_1
		= - B_i s_1 - C_i\frac{1}{4}\gamma(f^1)s_1\\
		&= - B_i s_1 - C_i s_2, \\
		(\nabla_{e_i} \Gamma)s_4 &= B_i\frac{1}{16}\gamma(f^2)\gamma(f^3)s_1 + C_i\frac{1}{16}\gamma(f^3)\gamma(f^3)s_1
		= B_i\frac{1}{4}\gamma(f^1)s_1- C_i s_1\\
		&= - C_i s_1 + B_i s_2.
	\end{align*}
	
	Using these formulas, we find that $E_i$ takes the form 
	\begin{align*}
		E_i & = \bar{e}_i + \Ver_{s^*}\bigl(t_1^* \nabla_{e_i} s_1 + t_2^* \nabla_{e_i} s_2+ \nabla_{e_i}(as_3 + bs_4)\bigr)\\
		&= \bar{e}_i + \Ver_{s^*}(a_is_3 + b_i s_4) + \Ver_{s^*}(t_1^* \nabla_{e_i} s_1 + t_2^* \nabla_{e_i} s_2 + a \nabla_{e_i} s_3 + b \nabla_{e_i} s_4) \\
		&= \bar{e}_i + a_i\check{s}_3 + b_i \check{s}_4 \\
		&\quad- \frac{1}{2}j_L \Ver_{s^*}\bigl(t_1^* (\nabla_{e_i} \Gamma) s_1 + t_2^* (\nabla_{e_i} \Gamma) s_2\bigr) +\frac{1}{2}j_L \Ver_{s^*}\bigl(a(\nabla_{e_i} \Gamma) s_3 + b (\nabla_{e_i} \Gamma) s_4\bigr)\\
		&= \bar{e}_i + a_i\check{s}_3 + b_i \check{s}_4 \\*
		&\quad- \frac{1}{2}j_L \Ver_{s^*}\bigl((t_1^*B_i+ t_2^* C_i)s_3 + (t_1^*C_i- t_2^* B_i) s_4\bigr)\\*
		&\quad+\frac{1}{2}j_L \Ver_{s^*}\bigl((-a B_i - b C_i)s_1 +(- aC_i + b B_i)s_2\bigr)\\
		&= \bar{e}_i + a_i\check{s}_3 + b_i \check{s}_4 \\
		&\quad- \frac{1}{2}\Ver_{s^*}\bigl( (t_1^*C_i- t_2^* B_i) s_3 -(t_1^*B_i+ t_2^* C_i)s_4\bigr)\\
		&\quad+\frac{1}{2} \Ver_{s^*}\bigl( -(- aC_i + b B_i)s_1+(-a B_i - b C_i)s_2\bigr)\\
		&= \bar{e}_i + a_i\check{s}_3 + b_i \check{s}_4 \\*
		&\quad+ \frac{1}{2}\left((aC_i - b B_i)\check{s}_1 + (-a B_i - b C_i)\check{s}_2 + (-t_1^*C_i+ t_2^* B_i) \check{s}_3 +(t_1^*B_i+ t_2^* C_i)\check{s}_4\right)
	\end{align*}
	with
	$B_i = -A_{i2}^3 - A_{i1}^4,\, C_i = A_{i1}^3 - A_{i2}^4$
	and $a_i = \partial a/\partial u_i,\, b_i = \partial b/\partial u_i$ for $i=1,2$.
	The other two basis vectors are given by
	\begin{align*}
		F_j = \Psi_*\biggl(\frac{\partial}{\partial t_j} \biggr) = \check{s}_j,\qquad j=1,2. 
	\end{align*}
	
	By \ref{item:cayleycond}, $X_\psi$ is Cayley in $M^8$ if and only if $\eta$, defined as
	\begin{align*}
		\eta(u,v,w,y) &= u^\flat\wedge X(v,w,y)^\flat + v^\flat\wedge X(w,u,y)^\flat + w^\flat\wedge X(u,v,y)^\flat + y^\flat\wedge X(v,u,w)^\flat \\*
		&\quad + u \lrcorner X(v,w,y) \lrcorner \Phi + v \lrcorner X(w,u,y) \lrcorner \Phi + w \lrcorner X(u,v,y) \lrcorner \Phi + y \lrcorner X(v,u,w) \lrcorner \Phi
	\end{align*}
	for $u,v,w,y \in T_{s}M,\, s \in M$, vanishes on $X_\psi$. 
	Using the formulas 
	\begin{align*}
		\Phi &= u^4 \bar{e}^1\bar{e}^2\bar{\nu}^3\bar{\nu}^4 - u^2 v^2(\omega_1\sigma^1 + \omega_2 \sigma^2 + \omega_3\sigma^3) + v^4\check{s}^1\check{s}^2\check{s}^3\check{s}^4 \nonumber\\
		&= u^4 \bar{e}^1\bar{e}^2\bar{\nu}^3\bar{\nu}^4 
		- u^2v^2(\bar{e}^1\bar{e}^2 + \bar{\nu}^3\bar{\nu}^4)(\check{s}^1\check{s}^2 + \check{s}^3\check{s}^4) 
		- u^2v^2(\bar{e}^1\bar{\nu}^3 + \bar{\nu}^4\bar{e}^2)(\check{s}^1\check{s}^3 + \check{s}^4\check{s}^2)\nonumber\\
		&\quad - u^2v^2(\bar{e}^1\bar{\nu}^4 + \bar{e}^2\bar{\nu}^3)(\check{s}^1\check{s}^4 + \check{s}^2\check{s}^3) 
		+ v^4 \check{s}^1\check{s}^2\check{s}^3\check{s}^4
	\end{align*}
	and $X(u,v,w)^\flat = w \lrcorner v \lrcorner u \lrcorner \Phi$ (see \cite[\Sec A]{Merkel2024} for the complete computation), 
	we compute 
	\begin{align*}
		&E_1^\flat\wedge X(E_2,F_1,F_2)^\flat + E_2^\flat\wedge X(F_1,E_1,F_2)^\flat + F_1^\flat\wedge X(E_1,E_2,F_2)^\flat + F_2^\flat\wedge X(E_2,E_1,F_1)^\flat\\*
		&\quad = u^2 v^4\bigl((a_1+b_2) (-\bar{e}^1\check{s}^3 - \bar{e}^2\check{s}^4+ \bar{\nu}^3\check{s}^1+ \bar{\nu}^4\check{s}^2) 
		+ (a_2-b_1)(\bar{e}^1\check{s}^4 - \bar{e}^2\check{s}^3 + \bar{\nu}^3\check{s}^2 -\bar{\nu}^4\check{s}^1)\bigr) \\*
		%%%
		&\qquad+ \frac{1}{2}u^2v^4\Bigl(
		\bigl(t_1^*(-B_2+C_1)+ t_2^* (-B_1-C_2)\bigr)(\bar{e}^1\check{s}^3 + \bar{e}^2 \check{s}^4- \bar{\nu}^3\check{s}^1- \bar{\nu}^4\check{s}^2) \\*
		&\quad\qquad\qquad\quad+ \bigl(t_1^*(-B_1-C_2)+ t_2^* (B_2-C_1)\bigr) (\bar{e}^1\check{s}^4 - \bar{e}^2 \check{s}^3 + \bar{\nu}^3\check{s}^2 - \bar{\nu}^4\check{s}^1)\Bigr)
		\intertext{and}
		&E_1 \lrcorner X(E_2,F_1,F_2) \lrcorner \Phi + E_2 \lrcorner X(F_1,E_1,F_2) \lrcorner \Phi + F_1 \lrcorner X(E_1,E_2,F_2) \lrcorner \Phi + F_2 \lrcorner X(E_2,E_1,F_1) \lrcorner \Phi \\*
		%%%
		&\quad = 3u^2v^4\bigl((a_1+b_2)(\bar{e}^1\check{s}^3 +\bar{e}^2\check{s}^4 - \bar{\nu}^3 \check{s}^1 - \bar{\nu}^4\check{s}^2) + 
		(a_2-b_1)(-\bar{e}^1\check{s}^4+\bar{e}^2\check{s}^3 - \bar{\nu}^3 \check{s}^2 + \bar{\nu}^4\check{s}^1)\bigr) \\*	
		%%%
		&\qquad +\frac{3}{2}u^2v^4\Bigl(
		\bigl(t_1^*(B_2-C_1)+ t_2^* (B_1+C_2)\bigr)(\bar{e}^1\check{s}^3+\bar{e}^2\check{s}^4 - \bar{\nu}^3\check{s}^1 - \bar{\nu}^4\check{s}^2) \\*
		&\quad\qquad\qquad\quad+\bigl(t_1^*(B_1+C_2)+ t_2^* (-B_2+C_1)\bigr)(\bar{e}^1\check{s}^4 -\bar{e}^2\check{s}^3 + \bar{\nu}^3\check{s}^2 - \bar{\nu}^4\check{s}^1)\Bigr),
		\intertext{which add up to}
		&\eta(E_1, E_2, F_1, F_2) \\
		&\quad = 2u^2v^4\bigl((a_1+b_2)(\bar{e}^1\check{s}^3 +\bar{e}^2\check{s}^4 - \bar{\nu}^3 \check{s}^1 - \bar{\nu}^4\check{s}^2) + 
		(a_2-b_1)(-\bar{e}^1\check{s}^4+\bar{e}^2\check{s}^3 - \bar{\nu}^3 \check{s}^2 + \bar{\nu}^4\check{s}^1)\bigr) \\*	
		&\qquad +u^2v^4\Bigl(
		\bigl(t_1^*(B_2-C_1)+ t_2^* (B_1+C_2)\bigr)(\bar{e}^1\check{s}^3+\bar{e}^2\check{s}^4 - \bar{\nu}^3\check{s}^1 - \bar{\nu}^4\check{s}^2) \\*
		&\quad\qquad\qquad\quad+\bigl(t_1^*(B_1+C_2)+ t_2^* (-B_2+C_1)\bigr)(\bar{e}^1\check{s}^4 -\bar{e}^2\check{s}^3 + \bar{\nu}^3\check{s}^2 - \bar{\nu}^4\check{s}^1)\Bigr).
	\end{align*} 
	
	Given that $\xi^* \in (V_+)_{p^*}$ was arbitrary, all fibers of the restricted tangent bundle \smash{$T X_\psi \vert_{(X_\psi)_{p^*}}$} are Cayley if and only if this expression vanishes at $p^*$ for all $t^* \in \R^2$. Since $u,v > 0$, this is equivalent to the conditions
	\begin{enumerate}[label=(\Roman*)]
		\item $a_1 + b_2 = 0$ and $a_2 - b_1=0$ at $p^*$,\label{cond:Cayleyholomorphic}
		\item $B_2-C_1 = 0$ and $B_1+C_2=0$ at $p^*$.\label{cond:Cayleyminimal}
	\end{enumerate}
	Substituting the definitions of $B_i$ and $C_i$ yields
	\begin{align*}
		B_2-C_1 &= (-A_{22}^3 - A_{12}^4) - (A_{11}^3-A_{12}^4) = -(A_{11}^3 + A_{22}^3) = - \Tr A^3, \\
		B_1+C_2& = (-A_{12}^3 - A_{11}^4) + (A_{12}^3-A_{22}^4) = -(A_{11}^4 + A_{22}^4) = - \Tr A^4,
	\end{align*}
	which shows that \ref{cond:Cayleyminimal} is equivalent to $\Tr A \vert_{p^*} = 0$.
	
	As for \ref{cond:Cayleyholomorphic}, recall that $\psi$ is locally given by $\psi = as_3 + bs_4$ and that it is holomorphic if $\bar{\partial}_{V_-} \psi = (\nabla^{V_-})^{0,1} \psi =0$. Since $e_1 + i e_2$ locally trivializes $(TL)^{0,1}$, the latter is equivalent to {$\nabla^{V_-}_{e_1 + ie_2} \psi =0$}.
	We compute
	\begin{align*}
		\nabla^{V_-}_{e_1 + i e_2} \psi 
		&= \nabla^{V_-}_{e_1} \psi + J_-( \nabla^{V_-}_{e_2} \psi) \\*
		&= \pi_{V_-}(\nabla_{e_1} \psi ) + J_-\bigl(\pi_{V_-}(\nabla_{e_2} \psi)\bigr) \\*
		&= \pi_{V_-}(a \nabla_{e_1} s_3 + b \nabla_{e_1}s_4 + a_1 s_3 + b_1 s_4)\\*
		&\quad + J_-\bigl(\pi_{V_-}(a \nabla_{e_2} s_3 + b \nabla_{e_2}s_4 + a_2 s_3 + b_2 s_4)\bigr) \\
		&= a_1 s_3 + b_1 s_4 + J_-(a_2 s_3 + b_2 s_4) \\
		&= (a_1+b_2)s_3 + (-a_2+b_1)s_4,
	\end{align*}
	where the fourth equality follows from \autoref{lemma:nablaspinor} and the fact that $s_1$ and $s_2$ are orthogonal to $V_-$. 
	Consequently, \ref{cond:Cayleyholomorphic} holds if and only if $\bar{\partial}_{V_-} \psi $ vanishes at $p^*$.
	
	Since $p^* \in L$ was arbitrary, $ X_\psi$ is Cayley in $M^8$, which is equivalent to \smash{$T X_\psi \vert_{(X_\psi)_{p}}$} having Cayley fibers for all $p \in L$, if and only if $L$ is minimal in $X$ and $\psi$ is holomorphic.
\end{proof}

\begin{rem}
	In the same way, we obtain an analogous result for $\chi + V_-$ with $\chi \in \Gamma(V_+)$. 
\end{rem}

\section{Some explicit examples in the \texorpdfstring{$\Gtwo$}{G2} case}\label{sec:examples}

In this section, we use \autoref{thm:assNEU} to construct explicit examples of associative and coassociative submanifolds in the space of anti-self-dual 2-forms $\Lambda^2_-(T^*S^4)$ on the round sphere $S^4$. 
To do so, we examine two isometric immersions of 2-spheres into $S^4$: The equatorial sphere $S^2 = S^4 \cap (\R^3 \times \{(0,0)\}) \subset \R^5$ and the Veronese immersion of the sphere $S^2(\sqrt{3})$ of radius $\sqrt{3}$. 
As we will show below, they are negative and positive superminimal, respectively, and in particular minimal. 

In order to apply the first part of \autoref{thm:assNEU}, we need to study the holomorphic sections of the bundle \vphantom{$\int^{B^B}$}\smash{$F \stackrel{\text{loc}}{=} \{f^2, f^3\}$}\vphantom{$\int^B$} over $L^2 = S^2, S^2(\sqrt{3})$ with respect to the holomorphic structure $\bar{\partial}_F = (\nabla^F)^{0,1}$. 
Unlike in the proof, we may work with an oriented local orthonormal adapted frame $(e_1, e_2, \nu_3, \nu_4)$ which does not satisfy \eqref{eq:defnormalcoords}. Therefore, the condition for a (local) section $\sigma = af^2 + bf^3$ to be holomorphic may not be $a_1 = b_2$ and $a_2 = - b_1$. Instead, we have to redo the computation using a more general version of \autoref{lemma:nablaf}. 

Since $e_1, e_2, \nu_3, \nu_4$ are orthonormal, we find 
\begin{align*}
	\nabla_{e_j} f^1 &= ( \Gamma_{j4}^1 -\Gamma_{j3}^2 )f^2 + (-\Gamma_{j3}^1-\Gamma_{j4}^2)f^3, \\
	\nabla_{e_j} f^2 &= ( \Gamma_{j3}^2 -\Gamma_{j4}^1 )f^1 + (\Gamma_{j2}^1-\Gamma_{j4}^3)f^3, \\
	\nabla_{e_j} f^3 &= ( \Gamma_{j3}^1 +\Gamma_{j4}^2 )f^1 + (\Gamma_{j4}^3 -\Gamma_{j2}^1)f^2 
\end{align*}
for $j=1,2$, where $\Gamma_{jk}^l$ denotes the connection coefficients with respect to the given frame. 
Using this, we compute
\begin{align*}
	\nabla^F_{e_1 + i e_2} \sigma
	&= \nabla^F_{e_1} \sigma + J_F( \nabla^F_{e_2} \sigma) \\*
	&= \pi_F(\nabla_{e_1} \sigma ) + J_F\bigl(\pi_F(\nabla_{e_2} \sigma)\bigr) \\*
	&= \pi_F(a \nabla_{e_1} f^2 + b \nabla_{e_1}f^3 + a_1 f^2 + b_1 f^3)\\*
	&\quad+ J_F\bigl(\pi_F(a \nabla_{e_2} f^2 + b \nabla_{e_2}f^3 + a_2 f^2 + b_2 f^3)\bigr) \\
	&= a  (\Gamma_{12}^1-\Gamma_{14}^3)f^3 + b (\Gamma_{14}^3 -\Gamma_{12}^1)f^2  + a_1 f^2 + b_1 f^3 \\*
	&\quad + J_F\bigl(a  (\Gamma_{22}^1-\Gamma_{24}^3)f^3 + b (\Gamma_{24}^3 -\Gamma_{22}^1)f^2 + a_2 f^2 + b_2 f^3\bigr) \\*
	&= (a_1-b_2)f^2 + (a_2+b_1)f^3\\*
	&\quad + \bigl(- a  (\Gamma_{22}^1-\Gamma_{24}^3) + b (\Gamma_{14}^3 -\Gamma_{12}^1)\bigr) f^2 + \bigl(a  (\Gamma_{12}^1-\Gamma_{14}^3) + b (\Gamma_{24}^3 -\Gamma_{22}^1)\bigr)f^3,
\end{align*}
where $a_j = da(e_j)$ and $b_j = db(e_j)$ for $j=1,2$. Thus, the conditions for holomorphicity in this more general setting are 
\begin{align*}
	a_1-b_2 =  (\Gamma_{22}^1-\Gamma_{24}^3)\, a - (\Gamma_{14}^3 -\Gamma_{12}^1)\, b \quad \text{ and } \quad a_2+b_1 =  (\Gamma_{14}^3 -\Gamma_{12}^1)\, a + (\Gamma_{22}^1-\Gamma_{24}^3)\, b
\end{align*}
or, equivalently, 
\begin{align}\label{eq:updatedholocond}
	G_1 + i\, G_2  = (\Gamma_{22}^1-\Gamma_{24}^3)\, G + i\, (\Gamma_{14}^3 -\Gamma_{12}^1)\, G
\end{align}
for $G= a+ib$ and $G_j = dG(e_j)$, $j=1,2$. 

After solving this partial differential equation, it remains to examine whether the solutions holomorphically extend to global sections of $F$. 
A necessary condition for this is that $\lvert \sigma \rvert^2 = 2(a^2 + b^2) = 2 \lvert G \rvert^2$ is uniformly bounded because $L^2 = S^2, S^2(\sqrt{3})$ is compact. 
If there exist solutions of this kind, we cover $TS^4 \vert_{L}$ with oriented local orthonormal adapted frames, determine how the coefficients of $\sigma$ transform under change of frame, and examine whether the condition \eqref{eq:updatedholocond} remains fulfilled. 

We will show that, while the only globally defined holomorphic section for $L = S^2$ is the zero section, there do exist nontrivial holomorphic sections in the case of $S^2(\sqrt{3})$. 
The first part of \autoref{thm:assNEU} then yields examples of associative submanifolds in $\Lambda^2_-(T^*S^4)$ based on these sections. This shows that twisting $E$ only leads to new associative examples for the Veronese immersion, not the equatorial sphere.

As for the coassociative case, the second part of \autoref{thm:assNEU} applied to $L^2 = S^2$ gives that $\eta + F$ for $\eta \in \Gamma(E)$ and $E = \spn \{f^1 = \vol_{S^2} - *_{S^4} \vol_{S^2}\} \subset \Lambda^2_-(T^*S^4)\vert_{S^2}$ is coassociative in $\Lambda_-^2(T^*S^4)$ whenever $\eta$ is a constant multiple of $f^1$. 
The Veronese immersion, on the other hand, is positive instead of negative superminimal. However, since the antipodal map $x \mapsto -x$ on $S^4$ is an orientation-reversing isometry, it identifies positive superminimal submanifolds with negative superminimal ones and vice versa. Hence, the composition of this map and the Veronese immersion defines a negative superminimal submanifold of $S^4$, for which the given frame $(e_1, e_2, \nu_3,\nu_4)$ is negatively oriented. Consequently, the bundles corresponding to the modified immersion are given by $E^- = \spn \{e^1 \wedge e^2 + \nu^3 \wedge \nu^4\}$ and \smash{$F^- = (E^-)^\perp$}. According to the second part of \autoref{thm:assNEU}, $\eta^- + F^-$ for $\eta^- \in \Gamma(E^-)$ is coassociative in $\Lambda_-^2(T^*S^4)$ if and only if $\eta^-$ is a constant multiple of $e^1 \wedge e^2 + \nu^3 \wedge \nu^4$. 
Hence, we find new coassociative examples in both cases.  

In the remainder of this section, we show that the equatorial sphere and the Veronese immersion are indeed negative and positive superminimal, respectively, and discuss the corresponding holomorphic sections in more detail. 

\subsection{The equatorial sphere}

The equatorial embedding of $S^2 \subset \R^3$ into $S^4 \subset \R^5$ is given by 
\begin{align*}
	\iota: S^2 \rightarrow S^4, \quad (x,y,z) \mapsto (x,y,z,0,0).
\end{align*}
Since this is naturally totally geodesic, the second fundamental form vanishes identically, which in particular implies that $S^2$ is negative superminimal in $S^4$. 

Using stereographic projection from $N = (1,0,0) \in S^2$ onto $\R^2$, the immersion \smash{$\iota\vert_{S^2 \setminus \{N\}}$} can be written as
\begin{align*}
	\iota: \R^2 \rightarrow S^4, \quad (u_1, u_2) \mapsto \biggl(\frac{\lvert u \rvert^2 - 1}{\lvert u \rvert^2 + 1}, \frac{2 u_1}{\lvert u \rvert^2 + 1}, \frac{2 u_2}{\lvert u \rvert^2 + 1}, 0, 0\biggr). 
\end{align*}
This leads to the oriented local orthonormal adapted frame 
\begin{align*}
	e_1 &= \frac{\lvert u \rvert^2 + 1}{2}\frac{\partial \iota}{\partial u_1} 
	=  \biggl(\frac{2u_1}{\lvert u \rvert^2 + 1}, \frac{1 - u_1^2 + u_2^2}{\lvert u \rvert^2 + 1}, - \frac{2 u_1 u_2}{\lvert u \rvert^2 + 1}, 0, 0\biggr),  &&\nu_3 = (0,0,0,1,0), \\*
	e_2 &= \frac{\lvert u \rvert^2 + 1}{2}\frac{\partial \iota}{\partial u_2} 
	=  \biggl(\frac{2u_2}{\lvert u \rvert^2 + 1}, - \frac{2 u_1 u_2 }{\lvert u \rvert^2 + 1}, \frac{1 + u_1^2 - u_2^2}{\lvert u \rvert^2 + 1}, 0, 0\biggr),  &&\nu_4 = (0,0,0,0,-1),
\end{align*}
where we omit the dependence on $u$ for simplicity. 

As the Levi-Civita connection on $S^4$ is the tangential part of the directional derivative in $\R^5$, the connection coefficients with respect to the given frame can be computed via
\begin{align*}
	\Gamma_{jk}^l  = \frac{\lvert u \rvert^2 + 1}{2} \biggl\langle \frac{\partial e_k}{\partial u_j}, e_l \biggr\rangle \qquad \text{ for } j,k,l=1,2,
\end{align*}
and analogously for $k,l = 3,4$. Using $\Gamma_{jk}^l = - \Gamma_{jl}^k$ because the frame is orthonormal, we find that the only nonzero coefficients with $j \in \{1,2\}$ are
\begin{align*}
	\Gamma_{11}^2 = - \Gamma_{12}^1 = u_2 \quad \text{ and } \quad - \Gamma_{21}^2 = \Gamma_{22}^1 = u_1. 
\end{align*}

Since $G_j = dG(e_j) =  (\lvert u \rvert^2 + 1) \, \partial_{u_j} G / 2$ for $j=1,2$,
the condition \eqref{eq:updatedholocond} with respect to ($e_1, e_2, \nu_3, \nu_4$) becomes
\begin{align*}
	\frac{\lvert u \rvert^2 + 1}{2}(\partial_{u_1} G + i \,\partial_{u_2}G)  = (u_1+ i u_2) \,G 
	\iff  (z \bar{z} + 1)\, \partial_{\bar{z}} G = z G 
	\iff \partial_{\bar{z}} \frac{G}{z \bar{z} + 1} = 0
\end{align*}
for $z = u_1 + i u_2$. 
In other words, \eqref{eq:updatedholocond} is satisfied if and only if $G(z) = H(z)(z \bar{z} + 1)$ for a holomorphic function $H: \C \rightarrow \C$.
If $\lvert \sigma \rvert^2 = 2(a^2 + b^2) = 2 \lvert G \rvert^2$ is uniformly bounded, so is $H$. 
In that case, Liouville's theorem forces $H$ to be constant, hence $G$ must be of the form $G(z) = C( z \bar{z} + 1)$ for $C \in \C$. Since this function is only bounded for $C = 0$, the only globally defined holomorphic section of $F$ is the zero section. 

\subsection{The Veronese immersion of \texorpdfstring{$S^2(\sqrt{3})$}{}}

The Veronese immersion of $S^2(\sqrt{3}) \subset \R^3$ into $S^4 \subset \R^5$ is given by 
\begin{align*}
	\iota: S^2(\sqrt{3}) \rightarrow S^4, \quad (x,y,z) \mapsto \frac{1}{\sqrt{3}}  \biggl(x y, x z, y z, \frac{x^2 - y^2}{2}, \frac{x^2 + y^2 - 2 z^2}{2 \sqrt{3}} \biggr)
\end{align*}
(see, \eg \cite[\Ch 1, \Sec 4]{LawsonMinimalSubmfs80}, \cite[\Sec 6.2]{DajczerMarcos2009}).
(This also represents an isometric embedding of the real projective plane $\mathbb{RP}^2$ into $S^4$ by viewing $\mathbb{RP}^2$ as the quotient of $S^2(\sqrt{3})$ by the antipodal relation.)
We cover $S^2(\sqrt{3})$ with two types of spherical coordinates:
\begin{align*}
	\psi^{-1} : \psi(U) = (0, \pi) \times (0, 2\pi) &\rightarrow U = S^2(\sqrt{3}) \setminus \{ x \geq 0,\, y = 0\}, \\
	(\varphi, \theta) &\mapsto \sqrt{3}\, (\sin{\varphi} \cos{\theta}, \sin{\varphi} \sin{\theta},\cos{\varphi}); \\[0.5em]
	\hat{\psi}^{-1} : \hat{\psi}(\hat{U}) = (0, \pi) \times (- \tfrac{\pi}{2}, \tfrac{3 \pi}{2})&\rightarrow \hat{U} = S^2(\sqrt{3}) \setminus \{ x \leq 0,\, z = 0\},\\
	(\hat{\varphi}, \hat{\theta}) &\mapsto \sqrt{3}\, (\sin{\hat{\varphi}} \sin{\hat{\theta}},  \cos{\hat{\varphi}}, \sin{\hat{\varphi}} \cos{\hat{\theta}}).
\end{align*}

With respect to the coordinates $(\varphi, \theta)$, the immersion $\iota\vert_U$ can be written as
\begin{align*}
	\frac{2}{\sqrt{3}}\iota(\varphi, \theta) 
	= \sin^2\varphi\, Y_1(\theta) + \sin 2 \varphi\, Y_2(\theta) + \frac{1}{\sqrt{3}}(1-3\cos^2\varphi)\, \mathbf{e}_5,
\end{align*}
where 
\begin{align*}
	Y_1(\theta) = \sin2\theta\, \mathbf{e}_1 + \cos2\theta\, \mathbf{e}_4, \qquad Y_2(\theta) = \cos\theta\, \mathbf{e}_2 + \sin\theta\, \mathbf{e}_3
\end{align*}
only depend on $\theta$, and $\mathbf{e}_1, \dots, \mathbf{e}_5$ is the standard basis of $\R^5$. 
Then 
\begin{align*}
	Y_3(\theta) = \frac{1}{2} (Y_1)^\prime (\theta) = \cos2\theta\, \mathbf{e}_1 - \sin2\theta\, \mathbf{e}_4, \qquad Y_4(\theta) =  (Y_2)^\prime(\theta) = -\sin\theta\, \mathbf{e}_2 + \cos\theta\,\mathbf{e}_3
\end{align*}
allow us to write
\begin{align*}
	\frac{2}{\sqrt{3}} \frac{\partial \iota}{\partial \varphi}(\varphi, \theta) &= \sin2\varphi \, Y_1(\theta) +  2 \cos 2 \varphi\, Y_2(\theta) + \sqrt{3}\sin2\varphi\, \mathbf{e}_5,\\*
	\frac{2}{\sqrt{3}} \frac{\partial \iota}{\partial\theta}(\varphi, \theta) &= 2 \sin^2\varphi\, Y_3(\theta) + \sin 2 \varphi\, Y_4(\theta).
\end{align*}
Since $Y_1, \dots, Y_4, \mathbf{e}_5$ are orthonormal, constructing an adapted orthonormal frame now reduces to normalizing vectors and computing a cross product in $\R^3 \cong \spn \{Y_1(\theta), Y_2(\theta), \mathbf{e}_5\}$, yielding
\begin{align*}
	e_1 &= \frac{1}{\sqrt{3}}\frac{\partial \iota}{\partial\varphi} 
	=  \frac{1}{2} \sin2\varphi\, Y_1 +  \cos 2 \varphi\, Y_2 + \frac{\sqrt{3}}{2}\sin2\varphi\, \mathbf{e}_5, \\
	e_2 &= \frac{1}{\sqrt{3} \sin \varphi}\frac{\partial \iota}{\partial\theta} = \sin\varphi\, Y_3 + \cos\varphi\, Y_4,\\
	\nu_3 &= - \cos \varphi\, Y_3 + \sin \varphi\, Y_4, \\ 
	\nu_4 &= \frac{1}{2}\bigl((1+ \cos^2 \varphi)\, Y_1 - \sin 2 \varphi\, Y_2 - \sqrt{3} \sin^2 \varphi\, \mathbf{e}_5\bigr),
\end{align*}
where we omit the dependence on $(\varphi, \theta)$ for simplicity. 
A calculation verifies that this is indeed a positively oriented adapted orthonormal frame. 

Similarly to before, we have
\begin{align*}
	\Gamma_{1k}^l  = \frac{1}{\sqrt{3}} \biggl\langle \frac{\partial e_k}{\partial \varphi}, e_l \biggr\rangle, \qquad 
	\Gamma_{2k}^l =  \frac{1}{\sqrt{3} \sin \varphi} \biggl\langle \frac{\partial e_k}{\partial \theta}, e_l \biggr\rangle, \qquad \text{ for } k,l=1,2,
\end{align*}
and analogous formulas for $k,l = 3,4$. Using $(Y_1)^\prime = 2 Y_3,\, (Y_2)^\prime = Y_4,\, (Y_3)^\prime = - 2 Y_1$ and $ (Y_4)^\prime = -Y_2$, we find that the only nonzero coefficients $\Gamma_{jk}^l$ with $j \in \{1,2\}$ are
\begin{align*}
	\Gamma_{11}^4 = - \Gamma_{12}^3 = \Gamma_{13}^2 =- \Gamma_{14}^1 = -\Gamma_{21}^3 = - \Gamma_{22}^4 =\Gamma_{23}^1 = \Gamma_{24}^2 &=\frac{1}{\sqrt{3}}, \\
	2 \Gamma_{21}^2 = -2 \Gamma_{22}^1 = \Gamma_{23}^4 = - \Gamma_{24}^3 &= \frac{2\cot \varphi}{\sqrt{3}}.
\end{align*}
Consequently, the second fundamental form is given by 
\begin{align*}
	A^{3} = 
	\begin{pmatrix}
		\Gamma_{13}^1 & \Gamma_{13}^2 \\
		\Gamma_{23}^1 & \Gamma_{23}^2
	\end{pmatrix}
	= 
	\frac{1}{\sqrt{3}}
	\begin{pmatrix}
		0  & 1 \\
		1 & 0
	\end{pmatrix}, \qquad 
	A^{4} = 
	\begin{pmatrix}
		\Gamma_{14}^1 & \Gamma_{14}^2 \\
		\Gamma_{24}^1 & \Gamma_{24}^2
	\end{pmatrix}
	= 
	\frac{1}{\sqrt{3}}
	\begin{pmatrix}
		-1  &  0 \\
		0 & 1
	\end{pmatrix},
\end{align*}
and satisfies $A^4 = + J_T A^3$. In other words, $U \subset S^2(\sqrt{3})$ is positive superminimal. 

On the other hand, we can write $\iota\vert_{\hat{U}}$ as
\begin{align*}
	\iota(\hat{\varphi}, \hat{\theta}) = \sqrt{3} \sin \hat{\varphi} \cos \hat{\varphi}\, \hat{Y}_1(\hat{\theta}) + \sin^2 \hat{\varphi} \, \hat{Y}_2(\hat{\theta}) + \cos^2\hat{\varphi}\, \biggl(- \frac{\sqrt{3}}{2} \mathbf{e}_4 + \frac{1}{2} \mathbf{e}_5\biggr),
\end{align*}
where 
\begin{align*}
	\hat{Y}_1(\hat{\theta}) &= \sin \hat{\theta}\, \mathbf{e}_1 + \cos \hat{\theta}\, \mathbf{e}_3, \\ 
	\hat{Y}_2(\hat{\theta}) &= \sqrt{3} \sin \hat{\theta} \cos \hat{\theta}\, \mathbf{e}_2+ \frac{\sqrt{3}}{2} \sin^2 \hat{\theta}\, \mathbf{e}_4 + \frac{1}{2} (1-3 \cos^2 \hat{\theta})\, \mathbf{e}_5.
\end{align*}
In a similar manner as before, we find the oriented orthonormal frame
\begin{align*}
	\hat{e}_1 &= \frac{1}{\sqrt{3}}\frac{\partial \iota}{\partial\hat{\varphi}} =  \cos 2 \hat{\varphi}\, \hat{Y}_1 + \frac{1}{\sqrt{3}}\sin 2 \hat{\varphi}\, \hat{Y}_2 - \frac{1}{\sqrt{3}}\sin 2\hat{\varphi}\, \biggl(- \frac{\sqrt{3}}{2} \mathbf{e}_4 + \frac{1}{2} \mathbf{e}_5\biggr),\\
	\hat{e}_2 &= \frac{1}{\sqrt{3} \sin \hat{\varphi}}\frac{\partial \iota}{\partial\hat{\theta}} =  \cos \hat{\varphi}\, \hat{Y}_3 + \sin \hat{\varphi} \, \hat{Y}_4 , \\
	\hat{\nu}_3 & =  \sin \hat{\varphi}\, \hat{Y}_3  - \cos \hat{\varphi}\, \hat{Y}_4 , \\
	\hat{\nu}_4 & =  -  \sin\hat{\varphi} \cos \hat{\varphi}\, \hat{Y}_1 + \frac{1}{ \sqrt{3}}(1 + \cos^2 \hat{\varphi})\, \hat{Y}_2 + \frac{1}{\sqrt{3}} (1 + \sin^2 \hat{\varphi})\, \biggl(- \frac{\sqrt{3}}{2} \mathbf{e}_4 + \frac{1}{2} \mathbf{e}_5\biggr), 
\end{align*}
where 
\begin{align*}
	\hat{Y}_3(\hat{\theta}) &=  (\hat{Y}_1)^\prime (\hat{\theta}) = \cos \hat{\theta}\, \mathbf{e}_1 - \sin \hat{\theta}\, \mathbf{e}_3, \\*
	\hat{Y}_4(\hat{\theta}) &=  \frac{1}{\sqrt{3}}(\hat{Y}_2)^\prime(\hat{\theta}) =  \cos 2 \hat{\theta}\, \mathbf{e}_2+ \frac{1}{2} \sin 2 \hat{\theta} \, \mathbf{e}_4 + \frac{\sqrt{3}}{2} \sin 2 \hat{\theta} \, \mathbf{e}_5.
\end{align*}
As before, $\hat{Y}_1, \dots, \hat{Y}_4$ are orthonormal, but this time {$\langle \hat{Y}_j, - \frac{\sqrt{3}}{2} \mathbf{e}_4 + \frac{1}{2} \mathbf{e}_5 \rangle = - \frac{1}{2} \delta_{2j}$}. 
Using {$(\hat{Y}_1)^\prime = \hat{Y}_3$}, {$(\hat{Y}_2)^\prime = \sqrt{3} \hat{Y}_4,\, (\hat{Y}_3)^\prime = - \hat{Y}_1$} and {$(\hat{Y}_4)^\prime = - \frac{4}{\sqrt{3}} \hat{Y}_2 - \frac{2}{\sqrt{3}}(- \frac{\sqrt{3}}{2} \mathbf{e}_4 + \frac{1}{2} \mathbf{e}_5 )$}, we find that the connection coefficients $\hat{\Gamma}_{jk}^l$ with respect to $(\hat{e}_1, \hat{e}_2, \hat{\nu}_3, \hat{\nu}_4)$ are the same as $\Gamma_{jk}^l$ for $j \in \{1,2\}$, but with $\hat{\varphi}$ in place of $\varphi$. 
Thus, we also have {$\hat{A}^4 = + J_T \hat{A}^3$}, which proves that the Veronese immersion of \smash{$S^2(\sqrt{3})$} is positive superminimal in $S^4$. 

As $G_1 = dG(e_1) =  \partial_\varphi G / \sqrt{3}$ and \smash{$G_2 = dG(e_2) = \partial_\theta G / (\sqrt{3} \sin \varphi)$},
the condition \eqref{eq:updatedholocond} with respect to ($e_1, e_2, \nu_3, \nu_4$) is
\begin{align}\label{eq:holocondVeronese}
	\sin \varphi \, \partial_\varphi G + i\, \partial_{\theta} G = \cos \varphi \, G.
\end{align}
The method of characteristics yields the solution 
\begin{align*}
	G(\varphi, \theta) = \sin \varphi \cdot H\Biggl(\theta - i \ln \biggl(\tan \frac{\varphi}{2}\biggr)\Biggr)
\end{align*}
for any holomorphic function $H: \{z \in \C \mid \re\, z \in (0, 2 \pi)\} \rightarrow \C$.
The necessary condition $\lvert \sigma \rvert^2 = 2(a^2 + b^2) = 2 \lvert G \rvert^2 \leq M < \infty$ holds, for example, when $H$ is constant, in which case the section takes the form $\sigma = C  \sin \varphi\, f^2 +  D  \sin \varphi\, f^3$ for $C, D \in \R$.

Under the transformation 
$(
\sin{\varphi} \cos{\theta},
\sin{\varphi} \sin{\theta},
\cos{\varphi}
)=(
\sin{\hat{\varphi}} \sin{\hat{\theta}},
\cos{\hat{\varphi}},
\sin{\hat{\varphi}} \cos{\hat{\theta}}
)$, 
the vectors $Y_1, \dots, Y_4$ become
\begin{align*}
	\sin^2 \varphi \, Y_1 
	&= 2\sin{\hat{\varphi}} \cos{\hat{\varphi}} \sin{\hat{\theta}} \, \mathbf{e}_1 + (\sin^2{\hat{\varphi}} \sin^2{\hat{\theta}} - \cos^2{\hat{\varphi}})\, \mathbf{e}_4, \\
	\sin \varphi\,Y_2 &= \sin{\hat{\varphi}} \sin{\hat{\theta}}\, \mathbf{e}_2 + \cos{\hat{\varphi}} \, \mathbf{e}_3, \\
	\sin^2 \varphi \, Y_3 &=  (\sin^2{\hat{\varphi}} \sin^2{\hat{\theta}} - \cos^2{\hat{\varphi}})\, \mathbf{e}_1 - 2\sin{\hat{\varphi}} \cos{\hat{\varphi}} \sin{\hat{\theta}}\, \mathbf{e}_4, \\
	\sin \varphi\, Y_4 &=  -\cos{\hat{\varphi}}\, \mathbf{e}_2 + \sin{\hat{\varphi}} \sin{\hat{\theta}} \,\mathbf{e}_3,
\end{align*}
where {$\sin \varphi = \sqrt{1 - \sin^2 \hat{\varphi} \cos^2 \hat{\theta}}$}.
Substituting these expressions into 
\begin{align*}
	\sin \varphi\, e_1 &=  \cos \varphi\, (\sin^2\varphi\, Y_1) +  (\cos^2 \varphi - \sin^2 \varphi)(\sin \varphi\, Y_2) + \sqrt{3}\sin^2\varphi \cos\varphi\, \mathbf{e}_5 \\
	&=\sin \hat{\varphi} \cos \hat{\theta}\, (\sin^2\varphi\, Y_1) +  (2\sin^2 \hat{\varphi} \cos^2 \hat{\theta} - 1) (\sin \varphi\, Y_2) \\
	&\quad + \sqrt{3} \sin \hat{\varphi} \cos \hat{\theta}\, (1 - \sin^2 \hat{\varphi} \cos^2 \hat{\theta})\, \mathbf{e}_5, \\[0.5em]
	\sin \varphi\, e_2 &=( \sin^2\varphi\, Y_3) + \cos\varphi\, (\sin \varphi\, Y_4) = (\sin^2\varphi\, Y_3) + \sin{\hat{\varphi}} \cos{\hat{\theta}}\, (\sin \varphi\, Y_4),\\[0.5em]
	\sin^2 \varphi\, \nu_3 &= - \cos \varphi\,(\sin^2 \varphi\, Y_3) + \sin^2 \varphi\,(\sin \varphi\, Y_4) \\
	&= - \sin{\hat{\varphi}} \cos{\hat{\theta}}\, (\sin^2 \varphi\, Y_3) + (1 - \sin^2 \hat{\varphi} \cos^2 \hat{\theta})(\sin \varphi\, Y_4), \\[0.5em]
	\sin^2 \varphi\, \nu_4 &= \frac{1}{2}\bigl((1+ \cos^2 \varphi)(\sin^2 \varphi\, Y_1) - 2\sin^2 \varphi \cos \varphi\, (\sin\varphi\, Y_2) - \sqrt{3} \sin^4 \varphi\, \mathbf{e}_5\bigr)\\*
	&= \frac{1}{2}\bigl((1+ \sin^2{\hat{\varphi}} \cos^2{\hat{\theta}})(\sin^2 \varphi\, Y_1) - 2(1 - \sin^2 \hat{\varphi} \cos^2 \hat{\theta}) \sin{\hat{\varphi}} \cos{\hat{\theta}}\,(\sin\varphi\, Y_2)\\*
	&\qquad - \sqrt{3} (1 - \sin^2 \hat{\varphi} \cos^2 \hat{\theta})^2 \mathbf{e}_5\bigr),
\end{align*}
and projecting the latter onto the orthonormal frame vectors $\hat{e}_1, \hat{e}_2, \hat{\nu}_3, \hat{\nu}_4$ gives
\begin{align*}
	\sin \varphi\, e_1 &= - \cos \hat{\varphi} \cos \hat{\theta}\, \hat{e}_1 + \sin \hat{\theta}\, \hat{e}_2, \\
	\sin \varphi\, e_2 &= - \sin \hat{\theta}\, \hat{e}_1 - \cos \hat{\varphi} \cos \hat{\theta}\,  \hat{e}_2, \\
	\sin^2 \varphi\, \nu_3 &= \frac{1}{4}(-1+ 3 \cos 2\hat{\theta} + 2 \cos^2\hat{\theta}\cos 2 \hat{\varphi})\, \hat{\nu}_3 - \cos \hat{\varphi} \sin 2 \hat{\theta}\,  \hat{\nu}_4, \\
	\sin^2 \varphi\, \nu_4 &= \cos \hat{\varphi} \sin 2 \hat{\theta}\, \hat{\nu}_3 +  \frac{1}{4}(-1+ 3 \cos 2\hat{\theta} + 2 \cos^2\hat{\theta}\cos 2 \hat{\varphi})\,\hat{\nu}_4.
\end{align*}
Consequently, $f^2$ and $f^3$ transform as 
\begin{align*}
	\sin^3 \varphi\, f^2 
	&=  (\sin \varphi\, e^1) \wedge (\sin^2 \varphi\, \nu^3) - (\sin^2 \varphi\,\nu^4) \wedge (\sin \varphi\,e^2)  \\
	&=   \frac{1}{4}\cos \hat{\varphi} \cos \hat{\theta}\,(1- 3 \cos 2\hat{\theta} - 2 \cos^2\hat{\theta}\cos 2 \hat{\varphi} - 8 \sin^2  \hat{\theta})\, \hat{f}^2 \\
	&\quad +  \frac{1}{4}\sin \hat{\theta}\,(8\cos^2 \hat{\varphi}\cos^2\hat{\theta}  +1- 3 \cos 2\hat{\theta} - 2 \cos^2\hat{\theta}\cos 2 \hat{\varphi})\, \hat{f}^3 \\
	&= \sin^2 \varphi \, (- \cos \hat{\varphi} \cos \hat{\theta} \, \hat{f}^2 + \sin \hat{\theta} \, \hat{f}^3),\\[0.5em]
	\sin^3 \varphi\, f^3 
	&=  (\sin \varphi\, e^1) \wedge (\sin^2 \varphi\, \nu^4) - (\sin \varphi\,e^2) \wedge (\sin^2 \varphi\,\nu^3)  \\
	&=  \frac{1}{4} \sin  \hat{\theta}\, (- 8 \cos^2 \hat{\varphi} \cos^2 \hat{\theta} -1+ 3 \cos 2\hat{\theta} + 2 \cos^2\hat{\theta}\cos 2 \hat{\varphi})\, \hat{f}^2 \\
	&\quad + \frac{1}{4} \cos \hat{\varphi} \cos \hat{\theta}\,( 1- 3 \cos 2\hat{\theta} - 2 \cos^2\hat{\theta}\cos 2 \hat{\varphi}
	- 8 \sin^2  \hat{\theta}
	)\, \hat{f}^3 \\
	&= \sin^2 \varphi \, ( - \sin \hat{\theta} \, \hat{f}^2 - \cos \hat{\varphi} \cos \hat{\theta} \, \hat{f}^3),
\end{align*}
where {$\sin \varphi = \sqrt{1 - \sin^2 \hat{\varphi} \cos^2 \hat{\theta}}$}.

Applying this to the section $\sigma = C \sin \varphi\, f^2+  D \sin \varphi\, f^3 $ yields
\begin{align*}
	\sigma
	= \hat{a} \hat{f}^2 + \hat{b} \hat{f}^3
	= (- C \cos \hat{\varphi} \cos \hat{\theta} - D \sin \hat{\theta} ) \hat{f}^2 + (C \sin \hat{\theta} - D \cos \hat{\varphi} \cos \hat{\theta} ) \hat{f}^3,
\end{align*}
which shows that $\sigma$ is globally defined. 
Due to the identical structure of the connection coefficients,
the condition \eqref{eq:updatedholocond} with respect to $(\hat{e}_1, \hat{e}_2, \hat{\nu}_3, \hat{\nu}_4)$ is the same as \eqref{eq:holocondVeronese}, but with $(\hat{\varphi}, \hat{\theta})$ in place of $(\varphi, \theta)$. A brief calculation confirms that $\hat{G} = \hat{a} + i \hat{b}$ satisfies this condition. Thus, $\sigma = C \sin \varphi\, f^2+  D \sin \varphi\, f^3 $ is a globally defined, holomorphic section of $F$ for all $C, D \in \R$.

\section{Conclusion}

Our findings demonstrate that the constructions of calibrated submanifolds in Euclidean spaces in \cite{CaliA2} cannot be entirely extended to the manifolds $T^*S^n$, $\Lambda^2_-(T^*X)$ ($X^4 = S^4, \mathbb{CP}^2$) and $\negspinS(S^4)$ considered in \cite{CaliB}.
While the results for the two spaces of exceptional holonomy are in line with the previous findings, the construction in $T^*S^n$ does not provide any new examples because the Lagrangian condition already requires the $1$-form to vanish.
As in \cite{CaliA2}, the (co\nobreakdash-)associative and Cayley subbundles constructed in \cite{CaliB} allow deformations destroying the linear structure of the fiber, while the base space $L^2$ remains of the same type after twisting, namely minimal or negative superminimal.
This implies that the moduli space of calibrated submanifolds near a calibrated subbundle of this kind not only contains deformations of the base $L$ but also of the fiber. 
In contrast, the special Lagrangian bundle construction in $T^*S^n$ is much more rigid than in the case of $T^*\R^n$. 

It would be interesting to study whether there exist other types of deformations in the above three cases and if we can find similar results for other manifolds of special holonomy. 
	
	\printbibliography

	\noindent
Mathematisches Institut, Universität Münster, Einsteinstrasse 62, 48149 Münster, Germany\\
\emph{E-mail address}: \href{mailto:rmerkel@uni-muenster.de}{rmerkel@uni-muenster.de}

\end{document}